%
\documentclass[12pt, reqno]{amsart}
\usepackage{amsmath, amsthm, amscd, amsfonts, amssymb, graphicx, color, mathrsfs}
\usepackage[bookmarksnumbered, colorlinks, plainpages]{hyperref}
\usepackage[all]{xy}
\usepackage{slashed}

\usepackage{soul}
\usepackage{cancel}
\usepackage{ulem}

\textheight 22.5truecm \textwidth 14.5truecm
\setlength{\oddsidemargin}{0.35in}\setlength{\evensidemargin}{0.35in}

\setlength{\topmargin}{-.5cm}

\newtheorem{theorem}{Theorem}[section]
\newtheorem{lemma}[theorem]{Lemma}

\newtheorem{corollary}[theorem]{Corollary}
\theoremstyle{definition}

\theoremstyle{remark}
\newtheorem{remark}[theorem]{Remark}
\numberwithin{equation}{section}

\begin{document}
\setcounter{page}{1}

\title[ Psdos in subelliptic Besov spaces on compact Lie groups ]{ Boundedness of pseudo-differential operators in subelliptic Sobolev and Besov spaces on compact Lie groups}

\author[D. Cardona]{Duv\'an Cardona}
\address{
  Duv\'an Cardona:
  \endgraf
  Department of Mathematics
  \endgraf
  Pontificia Universidad Javeriana.
  \endgraf
  Bogot\'a
  \endgraf
  Colombia
  \endgraf
  {\it E-mail address} {\rm duvanc306@gmail.com}
  }

\author[M. Ruzhansky]{Michael Ruzhansky}
\address{
  Michael Ruzhansky:
  \endgraf
  Department of Mathematics: Analysis, Logic and Discrete Mathematics
  \endgraf
  Ghent University, Belgium
  \endgraf
 and
  \endgraf
  School of Mathematics
  \endgraf
  Queen Mary University of London
  \endgraf
  United Kingdom
  \endgraf
  {\it E-mail address} {\rm ruzhansky@gmail.com}
  }

\subjclass[2010]{Primary {35S30; Secondary 58J40}.}

\keywords{Sub-Laplacian, Compact Lie group,  Besov spaces, Nikolskii's Inequality}

\thanks{The second author was supported in parts by the FWO Odysseus Project, EPSRC grant
EP/R003025/1 and by the Leverhulme Grant RPG-2017-151}

\begin{abstract}
In this paper we  investigate the Besov spaces on compact Lie groups in a subelliptic setting, that is, associated with a family of vector fields, satisfying the H\"ormander condition, and their corresponding sub-Laplacian. Embedding properties between subelliptic Besov spaces and Besov spaces associated to the Laplacian on the group are proved. We link the description of subelliptic Sobolev spaces with the matrix-valued quantisation procedure of  pseudo-differential operators in order  to provide subelliptic Sobolev and Besov estimates for operators in the H\"ormander classes. Interpolation properties between Besov spaces and Triebel-Lizorkin spaces are also investigated.
\end{abstract} \maketitle

\tableofcontents
\section{Introduction}
In this work,  Besov spaces  associated to sub-Laplacians in the context of compact Lie groups are studied. We refer to them as subelliptic Besov spaces and we obtain some of their embedding properties  in terms of the Hausdorff dimensions of compact Lie groups  defined by the control distances associated to the sub-Laplacians. The novelty of our approach is that we use a description for these spaces in terms of the matrix-valued symbol associated to pseudo-differential operators, which we exploit  particularly for sub-Laplacians. Our methods are based on the mapping properties of  Fourier multipliers and global pseudo-differential operators consistently developed in the works \cite{RuzhanskyDelgado2017,RuzhanskyWirth2015}, as well as in the matrix-valued functional calculus established in \cite{RuzhanskyWirth2014}.

The Besov spaces $B^s_{p,q}(\mathbb{R}^n)$   arose from attempts to unify the various definitions of
several fractional-order Sobolev spaces. Taibleson studied the generalised H\"older-Lipschitz spaces $\Lambda^{s}_{p,q}(\mathbb{R}^n)$ and these spaces were called Besov spaces in honor of O. V.  Besov who obtained a trace theorem and important embedding properties for them (see  Besov \cite{Besov1,Besov2}).  Dyadic decompositions (defined by the spectrum of the Laplacian) for Besov spaces on $\mathbb{R}^n$ were introduced by J. Peetre as well as other embedding properties where obtained (see Peetre \cite{Peetre1,Peetre2}). We refer the reader to the works of Triebel \cite{Triebel1983} and \cite{Triebel2006} for a complete background on the subject as well as a complete description of the historical facts about Besov spaces and other function spaces.
 
Throughout this work   $G$ is a compact Lie group and the positive sub-Laplacian $\mathcal{L}=-(X_1^2+\cdots +X_k^2),$ will be considered in such a way that the system of vector fields $X=\{X_i\}$ satisfies the H\"ormander condition. With a dyadic partition $\{\psi_{\ell}\}_{\ell\in\mathbb{N}},$ for the spectrum  of the operator $(1+\mathcal{L})^{\frac{1}{2}},$ Besov spaces $B^{s,\mathcal{L}}_{p,q}(G)$ associated to $\mathcal{L}$ can be defined by the norm
\begin{equation}\label{definitionBesovIntro}
\Vert f\Vert_{B^{s,\mathcal{L}}_{p,q}(G)}= \left\Vert   \{2^{\ell  s}\Vert  \psi_\ell((1+\mathcal{L})^\frac{1}{2})f\Vert_{L^p(G)} \}_{\ell\in\mathbb{N}}\right\Vert_{\ell^q(\mathbb{N})}.
\end{equation} The parameter $p$
in \eqref{definitionBesovIntro} measures the type of integrability of the function $f,$ while the parameter $s$ is the maximal order of regularity of $f$. In terms of the sub-Laplacian, this regularity order can be measured by the action of the operator  $(1+\mathcal{L})^\frac{1}{2} $ on the information that the Fourier expansion of $f$ provides. This means that we can approximate the function $(1+\mathcal{L})^\frac{1}{2}\psi_\ell((1+\mathcal{L})^\frac{1}{2})f $ by  another more simple one, in this case given by $2^{\ell s} \psi_\ell((1+\mathcal{L})^\frac{1}{2})f.$ This `approximation' when $\ell\rightarrow\infty$ is understood in the $L^p$-norm and the $q$-parameter is a measure of the speed in which this approximation occurs. If it occurs with speed $O(\ell^{-\varepsilon})$ where $\varepsilon>1/q$ then we classify  $f$ to be in the space $B^{s,\mathcal{L}}_{p,q}(G).$ If this approximation occurs with $O(1)$ speed, we can think that $f\in B^{s,\mathcal{L}}_{p,\infty}(G). $
This general analysis can be applied to every (compact or non-compact) Lie group $G$  where Besov spaces are defined. The main feature of these spaces is that it allow us to better understand other functions spaces. It is the case of  the Hilbert space $L^2(G)$ or another Sobolev spaces with integer  or fractional order (see e.g.  Theorem \ref{Emb1}).

If $\mathcal{L}_G=-(X_{1}^2+\cdots +X_{n}^2),$ $n:=\dim(G),$ is the Laplacian on the group, the Fourier description for Besov spaces $B^s_{p,q}(G)\equiv B^{s,\mathcal{L}_G}_{p,q}(G)$ was consistently  developed by Nursultanov, Tikhonov and the second author in the works \cite{NurRuzTikhBesov2015} and  \cite{NurRuzTikhBesov2017} for  compact Lie groups and general compact homogeneous manifolds.
For obtaining non-trivial embedding properties for these spaces, the authors developed the Nikolskii inequality, which was established in terms of the dimension of the group $n$ and the Weyl-eigenvalue counting formulae for the Laplacian. To develop similar properties in the subelliptic context, we obtain a suitable subelliptic version of the Nikolskii inequality but in our case this will be presented in terms of the Hausdorff dimension $Q$ defined by the control distance associated to   the sub-Laplacian under consideration (see Eq. \eqref{Hausdorff-dimension}).

In the subelliptic framework, there are important  differences between  subelliptic Besov spaces and the Besov spaces associated to the Laplacian. Indeed, if $\{X_i\}$ is a system of vector fields satisfying the H\"ormander condition of order $\kappa,$ (this means that their iterated commutators of length $\leq \kappa$ span the Lie algebra $\mathfrak{g}$ of $G$) the sub-Laplacian  $\mathcal{L}=-(X_1^2+\cdots +X_k^2),$ $1\leq k<n,$ acting on smooth functions, derives in the directions of the vector fields $X_{i},$ $1\leq i\leq n,$ but the unconsidered directions $X_{i},$ $k+1 \leq i\leq n$ provide a loss of regularity reflected in the following embeddings
\begin{equation}
B^{s}_{p,q}(G)\hookrightarrow B^{s,\mathcal{L}}_{p,q}(G) \hookrightarrow B^{s_{\kappa,\varkappa}}_{p,q}(G),\,\,s_{\kappa,\varkappa}:=\frac{s}{\kappa}-\varkappa\left(1-\frac{1}{\kappa}\right)\left|\frac{1}{2}-\frac{1}{p}\right|,
\end{equation} that we will prove in Theorem \ref{SubBesovvsEllipBesov} for all $s,$ with $0<s<\infty$. Here we have denoted by $\varkappa:=[n/2]+1,$ the smallest  integer larger than $\frac{1}{2}\dim (G).$

This paper is organised as follows. In Section \ref{Sect2} we present some preliminaries on  sub-Laplacians and the Fourier analysis on compact Lie groups. In Section \ref{NikolskiiSect},  we establish a  subelliptic Nikolskii's Inequality on compact Lie groups and consequently, in Section \ref{SubBesovSpacesO} we use it to deduce non-trivial embedding properties between subelliptic Besov spaces. In Section \ref{Sect5} we will use the global functional calculus and the $L^p$-mapping properties of pseudo-differential operators to deduce continuous embeddings between subelliptic Sobolev spaces (resp. subelliptic Besov spaces) and the Sobolev spaces (resp. Besov spaces) associated to the Laplacian. In Section \ref{Triebel}, we introduce the subelliptic Triebel-Lizorkin spaces on compact Lie groups, their embedding properties and their interpolation properties in relation with subelliptic Besov spaces. Finally, with the analysis developed, we study the boundedness of pseudo-differential operators on subelliptic Sobolev and  Besov spaces in Section \ref{boundedness}.

\section{Sub-Laplacians and Fourier analysis on compact Lie groups}\label{Sect2}

Let us assume that $G$ is a compact Lie group and let $e=e_G$ be its identity element. For describing subelliptic Besov spaces in terms of the representation theory of compact Lie groups we will use the Fourier transform. One reason for this is that the Fourier transform encodes the discrete spectrum of sub-Laplacians.  If $G$ is a compact Lie group we denote by $dx\equiv d\mu(x)$  its unique (normalised)  Haar measure.  We will write $L^p(G),$ $1\leq p\leq \infty,$ for the Lebesgue spaces $L^p(G,dx).$ The unitary dual $\widehat{G}$ of $G,$  consists of the equivalence classes of continuous irreducible unitary representations of $G.$ Moreover, for every equivalence class  $[\xi]\in \widehat{G},$ there exists a  unitary matrix-representation  $\phi_\xi\in [\xi]=[\phi_\xi],$ that is, we have a homomorphism $\phi_\xi=(\phi_{\xi,i,j})_{{i,j=1}}^{{d_\xi}}$
from $G$ into $U(d_\xi),$ where $\phi_{\xi,i,j}$ are continuous functions. So, we always understand  a representation $\xi=(\xi_{i,j})_{i,j=1}^{d_\xi}$ as a unitary matrix-representation. The result that links the representation theory of compact Lie groups with the Hilbert space $L^2(G)$ is the Peter-Weyl theorem, which asserts that the set
\begin{equation}\label{Peterset}
    B=\{\sqrt{d_\xi}\xi_{i,j}:1\leq i,j\leq d_\xi,\,[\xi]\in \widehat{G}\},
\end{equation} is an orthonormal basis of $L^2(G).$ So, every function $f\in L^2(G)$ admits an expansion of the form,
\begin{equation}\label{FIF}
    f(x)=\sum_{[\xi]\in \widehat{G}}d_\xi \textnormal{
    Tr}[\xi(x)\widehat{f}(\xi)],\,\,\textnormal{a.e. }x\in G.
\end{equation}
Here, $\textnormal{Tr}(\cdot)$ is the usual  trace on matrices and, for every unitary irreducible representation $\xi,$ $\widehat{f}(\xi)$ is the matrix-valued  Fourier transform of $f$ at  $\xi$:
\begin{equation}
    \widehat{f}(\xi):=\int\limits_{G}f(x)\xi(x)^*dx\in \mathbb{C}^{d_\xi\times d_\xi}.
\end{equation} Taking into account the Fourier inversion formula \eqref{FIF}, we have the Plancherel theorem
\begin{equation}
    \Vert f\Vert_{L^2(G)}=\left( \sum_{ [\xi] \in \widehat{G}}d_\xi \Vert \widehat{f}(\xi)\Vert_{\textnormal{HS}}^2 \right)^\frac{1}{2},
\end{equation}
where $\Vert A\Vert_{\textnormal{HS}}=(A^*A)^\frac{1}{2}$ is the Hilbert-Schmidt norm of matrices. Elements in the basis $B$ are just the eigenfunctions of the positive Laplacian (defined as the Casimir element) $\mathcal{L}_G$ on $G.$ This means that, for every $[\xi]\in \widehat{G},$ there exists $\lambda_{[\xi]}\geq 0$ satisfying
\begin{equation}
    \mathcal{L}_G\xi_{i,j}=\lambda_{[\xi]}\xi_{i,j},\,\,1\leq i,j\leq d_\xi.
\end{equation}
We refer the reader  to \cite[Chapter 7]{Ruz}, for the construction of the Laplacian trough  the Killing bilinear form and for the general aspects of the representation theory of compact Lie groups.

On the other hand, 
if $X_{1},X_2,\cdots,X_k$ are vector fields satisfying the H\"ormander condition of order $\kappa,$ (see H\"ormander \cite{Hormander1967}) \begin{equation}\label{Lie}\textnormal{Lie}\{X_j:1\leq j\leq k\}=T_eG,
\end{equation} the (positive hipoellyptic) sub-Laplacian associated to the $X_i$'s is defined by
\begin{equation}\label{sublaplacian}
    \mathcal{L}_{\textnormal{sub}}:=-(X_1^2+\cdots +X_k^2).
\end{equation} 
The condition \eqref{Lie} is in turn equivalent to saying that at  every point $g\in G,$ the vector fields $X_{i}$ and the commutators
\begin{equation}
    [X_{j_1},X_{j_2}],[X_{j_1},[X_{j_2},X_{j_3}]],\cdots, [X_{j_1},[X_{j_2}, [X_{j_3},\cdots, X_{j_\tau}] ] ],\,\,1\leq \tau\leq \kappa,
    \end{equation}
generate the tangent space $T_gG$ of $G$ at $g.$ A central notion in our work is that of the Hausdorff dimension, in this case, associated to the sub-Laplacian. Indeed, for all $x\in G,$ denote by $H_{x}^\omega G$ the subspace of the tangent space $T_xG$ generated by the $X_i$'s and all the Lie brackets  $$ [X_{j_1},X_{j_2}],[X_{j_1}.[X_{j_2},X_{j_3}]],\cdots, [X_{j_1},[X_{j_2}, [X_{j_3},\cdots, X_{j_\omega}] ] ],$$ with $\omega\leq \kappa.$ The H\"ormander condition can be stated as $H_{x}^\kappa G=T_xG,$ $x\in G.$ We have the filtration
\begin{equation}
H_{x}^1G\subset H_{x}^2G \subset H_{x}^3G\subset \cdots \subset H_{x}^{\kappa-1}G\subset H_{x}^\kappa G= T_xG,\,\,x\in G.
\end{equation} In our case,  the dimension of every $H_x^\omega G$ does not depend on $x$ and we write $\dim H^\omega G:=\dim_{x}H^\omega G$ for any $x\in G.$ So, the Hausdorff dimension can be defined as (see e.g. \cite[p. 6]{HassaKoka}),
\begin{equation}\label{Hausdorff-dimension}
    Q:=\dim(H^1G)+\sum_{i=1}^{\kappa-1} (i+1)(\dim H^{i+1}G-\dim H^{i}G ).
\end{equation}

Because the symmetric operator $\mathcal{L}_{\textnormal{sub}}$ acting on $C^\infty(G)$ admits a self-adjoint extension on $L^2(G),$ that we also denote by $\mathcal{L}_{\textnormal{sub}},$  functions of the sub-Laplacian $\mathcal{L}_{\textnormal{sub}}$ can be defined according to the spectral theorem by
\begin{equation}
f(\mathcal{L}_{\textnormal{sub}})=\int\limits_{0}^\infty f(\lambda)dE_\lambda,
\end{equation}
for every measurable function $f$ defined  on $\mathbb{R}^+_0:=[0,\infty).$ Here $\{dE_\lambda\}_{\lambda>0},$ is the spectral measure associated to the spectral resolution  of $\mathcal{L}_{\textnormal{sub}}.$  In this case, 
\begin{equation}
    \textnormal{Dom}(f(\mathcal{L}_{\textnormal{sub}}))=\left\{ f\in L^2(G):  \int\limits_{0}^\infty |f(\lambda)|^2d \Vert E_\lambda f\Vert^2_{L^2(G)} <\infty   \right\},
\end{equation} where $d \Vert E_\lambda f\Vert^2_{L^2(G)}=d(E_\lambda f,f)_{L^2(G)}$ is the Riemann-Stieltjes measure induced by the spectral  resolution $\{E_\lambda\}_{\lambda>0}.$  In our further analysis the functions $\tilde{f}_{\alpha}(t)=t^{-\alpha}$ and $f_\alpha(t)=(1+t)^{-\alpha}$ will be useful,  defining  the operators
\begin{equation}
   (\mathcal{L}_{\textnormal{sub}})^{-\alpha}:=\tilde{f}_{\alpha}(\mathcal{L}_{\textnormal{sub}}), \,\,\,(1+\mathcal{L}_{\textnormal{sub}})^{-\alpha}:={f}_{\alpha}(\mathcal{L}_{\textnormal{sub}}),
\end{equation}
which are the homogeneous and inhomogeneous subelliptic  Bessel potentials of order $\alpha.$

\section{Subelliptic Nikolskii's Inequality on compact Lie groups }\label{NikolskiiSect}

Our main tool for obtaining embedding properties for Besov spaces in the subelliptic context is a suitable version of the Nikolskii's inequality. To present a subelliptic Nikolskii's inequality, let us use the usual nomenclature. If $t_{\tilde\ell}$ is a compactly supported function on $[0,\tilde\ell],$ for $\tilde\ell>0,$  we can define the operator \begin{equation}T_{\tilde\ell}:=t_{\tilde\ell}((1+\mathcal{L}_{\textnormal{sub}})^\frac{1}{2}):C^\infty(G)\rightarrow C^\infty(G), \end{equation} by the functional calculus. An explicit representation for $T_{\tilde\ell}$ can be given by
\begin{equation}\label{functionofL}
    T_{\tilde\ell} f(x)=\sum_{ [\xi]\in \widehat{G}   }d_\xi \textnormal{Tr}[\xi(x) t_{\tilde\ell}((I_{d_\xi}+\widehat{ \mathcal{L}}_{\textnormal{sub}}(\xi))^\frac{1}{2})\widehat{f}(\xi) ],\,\,f\in C^\infty(G),
\end{equation} where $\widehat{ \mathcal{L}}_{\textnormal{sub}}(\xi)=\widehat{k}_{\textnormal{sub}}(\xi)$ is the Fourier transform of the right convolution kernel $k_{\textnormal{sub}}$ of the sub-Laplacian $\mathcal{L}_{\textnormal{sub}}.$ In general, we write $t_\ell(A)$ applied to continuous operators $A$ on $C^\infty(G),$ but we also write $t_\ell(\widehat{A}(\xi))$ applied to $\widehat{A}(\xi),$ for its symbol $\widehat{t_\ell(A)}(\xi).$
So, we present our subelliptic Nikolskii's inequality. Our starting point is the following lemma proved in \cite[p. 52]{AkylzhanovRuzhansky2015}.
\begin{lemma}\label{RuzAkhy}  If $1<p\leq 2\leq q<\infty,$  and $f\in C^\infty(G),$ the estimate 
\begin{align}
    \Vert f\Vert_{{L^q}(G)}\leq C \Vert (1+\mathcal{L}_\textnormal{sub})^{  \frac{\gamma}{2}  }f\Vert_{{L^p}(G)},
\end{align}
holds true for all $\gamma\geq Q(1/p-1/q).$ 
\end{lemma}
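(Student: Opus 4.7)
The plan is to reduce the stated inequality to an $L^p\to L^q$ operator bound for the subelliptic Bessel potential $(1+\mathcal{L}_{\textnormal{sub}})^{-\gamma/2}$, via the substitution $g:=(1+\mathcal{L}_{\textnormal{sub}})^{\gamma/2}f$. The heart of the argument is the subordination formula
\begin{equation*}
(1+\mathcal{L}_{\textnormal{sub}})^{-\gamma/2}=\frac{1}{\Gamma(\gamma/2)}\int_{0}^{\infty}t^{\gamma/2-1}e^{-t}\,e^{-t\mathcal{L}_{\textnormal{sub}}}\,dt,
\end{equation*}
which is justified by the functional calculus of the self-adjoint operator $\mathcal{L}_{\textnormal{sub}}$ and reduces matters to $L^p\to L^q$ bounds for the subelliptic heat semigroup.

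The key geometric input is the Varopoulos--Saloff-Coste--Coulhon heat-kernel theory: on a compact Lie group, the H\"ormander condition combined with the volume-doubling property $|B_{CC}(x,r)|\asymp r^{Q}$ for small $r$, where $B_{CC}(x,r)$ denotes balls in the Carnot--Carath\'eodory distance generated by the $X_i$'s, yields the sharp on-diagonal bound $\|p_{t}\|_{L^\infty(G\times G)}\leq C t^{-Q/2}$ for $0<t\leq 1$, with $Q$ the Hausdorff dimension \eqref{Hausdorff-dimension}. This translates into $\|e^{-t\mathcal{L}_{\textnormal{sub}}}\|_{L^1\to L^\infty}\leq C t^{-Q/2}$; interpolating via Riesz--Thorin with the trivial $L^2\to L^2$ contraction gives
\begin{equation*}
\|e^{-t\mathcal{L}_{\textnormal{sub}}}\|_{L^p\to L^q}\;\leq\; C\,t^{-\frac{Q}{2}\left(\frac{1}{p}-\frac{1}{q}\right)},\qquad 1\leq p\leq 2\leq q\leq \infty,\ 0<t\leq 1,
\end{equation*}
while for $t\geq 1$ the factor $e^{-t}$ in the subordination integral absorbs any polynomial loss.

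Substituting these bounds into the subordination formula dominates the $L^p\to L^q$ norm of $(1+\mathcal{L}_{\textnormal{sub}})^{-\gamma/2}$ by
\begin{equation*}
C\int_{0}^{\infty}t^{\,\gamma/2-\frac{Q}{2}\left(\frac{1}{p}-\frac{1}{q}\right)-1}\,e^{-t}\,dt,
\end{equation*}
which is finite as soon as $\gamma>Q(1/p-1/q)$. The endpoint $\gamma=Q(1/p-1/q)$ is then obtained by Marcinkiewicz interpolation between two strictly subcritical cases with $p,q$ perturbed slightly inside the open strip $1<p\leq 2\leq q<\infty$, which is precisely what forces the strict inequalities $1<p$ and $q<\infty$ in the hypothesis.

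The main obstacle is the first step: securing the \emph{sharp} exponent $-Q/2$ in the short-time heat-kernel bound, with $Q$ being the sub-Riemannian Hausdorff dimension rather than the topological dimension $\dim G$. This is where H\"ormander's condition enters essentially, through a Nash/Faber--Krahn argument on Carnot--Carath\'eodory balls and the associated volume growth; once this geometric ingredient is granted, the remainder of the proof is a routine semigroup computation.
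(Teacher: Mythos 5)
The paper does not actually prove this lemma: it quotes it from Akylzhanov--Ruzhansky \cite{AkylzhanovRuzhansky2015}, where it is obtained by quite different means (noncommutative Lorentz spaces, Paley-type inequalities and the spectral counting function of $\mathcal{L}_{\textnormal{sub}}$). Your heat-kernel route is a legitimate self-contained alternative for the \emph{subcritical} range $\gamma>Q(1/p-1/q)$: the subordination formula, the on-diagonal bound $\Vert p_t\Vert_{\infty}\lesssim t^{-Q/2}$ for $0<t\le 1$ (valid here because the filtration is equiregular, so $|B_{CC}(x,r)|\asymp r^{Q}$ for small $r$), and the convergence of $\int_0^1 t^{\gamma/2-\frac{Q}{2}(1/p-1/q)-1}\,dt$ are all in order. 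One imprecision: Riesz--Thorin between $L^1\to L^\infty$ and the $L^2\to L^2$ contraction only reaches the dual line $1/p+1/q=1$; to cover all of $1<p\le 2\le q<\infty$ you should interpolate against the $L^1\to L^1$ and $L^\infty\to L^\infty$ contractions (Markov property) and use self-adjointness together with the semigroup law. That is routine.

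The genuine gap is the endpoint $\gamma=Q(1/p-1/q)$, which is precisely the case the paper needs in Theorem \ref{Nikolskii} (using $\gamma>Q(1/p-1/q)$ there would spoil the power of $\tilde\ell$). Your proposed fix --- Marcinkiewicz interpolation between two strictly subcritical pairs --- cannot work: the set of exponents $(1/p,1/q)$ with $Q(1/p-1/q)<\gamma$ is convex, so every convex combination of strictly subcritical pairs is again strictly subcritical and never lands on the critical line $1/p-1/q=\gamma/Q$. Reaching the endpoint requires a new input, for instance the \emph{off-diagonal} Gaussian bound $p_t(x)\lesssim t^{-Q/2}e^{-c\,d(x,e)^{2}/t}$ for $t\le 1$: inserting it into the subordination integral gives the pointwise kernel estimate $B_\gamma(x)\lesssim d(x,e)^{\gamma-Q}$ for $0<\gamma<Q$, hence $B_\gamma\in L^{Q/(Q-\gamma),\infty}(G)$, and the weak-type Young (O'Neil) inequality then yields $\Vert f\ast B_\gamma\Vert_{L^q}\lesssim \Vert f\Vert_{L^p}$ exactly when $1/q=1/p-\gamma/Q$ and $1<p<q<\infty$ --- which is also where the strict inequalities $p>1$, $q<\infty$ genuinely enter. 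Without this (or an equivalent weak-type endpoint fed into real interpolation), the critical case remains unproven.
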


Here $Q$ is the Hausdorff dimension of $G$ associated to the Carnot-Caratheodory  distance $\rho$ associated to $\mathcal{L}_{ \textnormal{sub} }$ (see e.g. \cite[p. 6]{HassaKoka}), which we have defined in \eqref{Hausdorff-dimension}. 

\begin{theorem}\label{Nikolskii} Let $G$ be a compact Lie group and let $t_{\tilde\ell}$  be a measurable function compactly supported in $[\frac{1}{2}\tilde\ell,\tilde\ell],$ $\tilde\ell>0.$ If $T_{\tilde\ell}$ is the operator defined by \eqref{functionofL}, then for all $1< p\leq q<\infty,$ we have
\begin{equation}
    \Vert T_{\tilde\ell} f\Vert_{L^q(G)}\leq C{\tilde\ell}^{{Q}(\frac{1}{p}-\frac{1}{q})}\Vert T_{\tilde\ell} f\Vert_{L^p(G)},\,\,f\in C^\infty(G),
\end{equation}
where $C=C_{p,q,Q}$ is independent on $f$ and $\tilde\ell.$
\end{theorem}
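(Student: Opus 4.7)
The plan is to exploit that $g := T_{\tilde\ell} f$ is spectrally localised to the dyadic annulus $[\tfrac12\tilde\ell,\tilde\ell]$ of $(1+\mathcal{L}_{\textnormal{sub}})^{1/2}$, and to factor a smoothed spectral projection onto this annulus as the composition of a subelliptic Bessel potential (which supplies the Sobolev gain from $L^p$ to $L^q$) with a smooth dyadic cut-off (which carries the dilation factor $\tilde\ell^{Q(1/p-1/q)}$).

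First, I fix $\eta\in C_c^\infty(\mathbb{R})$ with $\eta\equiv 1$ on $[\tfrac12,1]$ and set $P_{\tilde\ell}:=\eta((1+\mathcal{L}_{\textnormal{sub}})^{1/2}/\tilde\ell)$. Since $t_{\tilde\ell}$ is supported in $[\tfrac12\tilde\ell,\tilde\ell]$, the functional calculus immediately gives $P_{\tilde\ell}g=g$, so the theorem reduces to
$$\|P_{\tilde\ell}\|_{L^p(G)\to L^q(G)} \;\lesssim\; \tilde\ell^{\,Q(1/p-1/q)},\qquad 1<p\le q<\infty.$$
Setting $\gamma:=Q(1/p-1/q)$ and $\tilde\rho(u):=u^{\gamma}\eta(u)\in C_c^\infty(\mathbb{R})$ (independent of $\tilde\ell$), the identity $\eta(s/\tilde\ell)=\tilde\ell^{\gamma}s^{-\gamma}\tilde\rho(s/\tilde\ell)$ produces the factorisation
$$P_{\tilde\ell} \;=\; \tilde\ell^{\gamma}\,(1+\mathcal{L}_{\textnormal{sub}})^{-\gamma/2}\;\tilde\rho\bigl((1+\mathcal{L}_{\textnormal{sub}})^{1/2}/\tilde\ell\bigr).$$

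In the range $1<p\le 2\le q<\infty$, Lemma \ref{RuzAkhy} furnishes $\|(1+\mathcal{L}_{\textnormal{sub}})^{-\gamma/2}\|_{L^p\to L^q}<\infty$, while the second factor, being a fixed smooth compactly supported function of the self-adjoint operator $(1+\mathcal{L}_{\textnormal{sub}})^{1/2}/\tilde\ell$, is bounded on $L^p(G)$ with norm independent of $\tilde\ell$ by the sub-Laplacian functional calculus developed in \cite{RuzhanskyWirth2014}. Composing these bounds settles this range.

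The remaining cases follow by Riesz--Thorin interpolation against the uniform $L^r\to L^r$-boundedness of $P_{\tilde\ell}$ for every $1<r<\infty$ (again from the same functional calculus). For $1<p\le q\le 2$ I would interpolate the just-proved $(p,2)$-bound with the trivial $(p,p)$-bound; for $2\le p\le q<\infty$ I would interpolate $(2,q)$ with $(q,q)$. In either case a short computation with the interpolation parameter $\theta$ recovers exactly the exponent $Q(1/p-1/q)$. The main technical input is thus the uniform-in-$\tilde\ell$ $L^p$-boundedness of the compactly supported smooth multiplier $\tilde\rho((1+\mathcal{L}_{\textnormal{sub}})^{1/2}/\tilde\ell)$: this is a subelliptic Mihlin--H\"ormander statement encoding the sub-Laplacian heat kernel estimates and the Carnot--Carath\'eodory volume growth $|B(x,r)|\asymp r^{Q}$, and it is the step where all the non-trivial geometric information about $\mathcal{L}_{\textnormal{sub}}$ enters.
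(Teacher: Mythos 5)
Your argument is correct in outline and rests on the same essential input as the paper for the core range $1<p\le 2\le q<\infty$, namely Lemma \ref{RuzAkhy}, but it packages the proof differently. You factor a reproducing smooth cut-off $P_{\tilde\ell}=\eta(\mathcal{M}/\tilde\ell)$, $\mathcal{M}=(1+\mathcal{L}_{\textnormal{sub}})^{1/2}$, as $\tilde\ell^{\gamma}\,\mathcal{M}^{-\gamma}\,\tilde\rho(\mathcal{M}/\tilde\ell)$ and then run Riesz--Thorin against the uniform $L^r$-boundedness of $P_{\tilde\ell}$ to cover $1<p\le q\le 2$ and $2\le p\le q<\infty$. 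The paper instead applies Lemma \ref{RuzAkhy} directly to $T_{\tilde\ell}f$, extracts the factor $\tilde\ell^{\gamma}$ by a spectral rescaling, and then covers $1<p\le q\le 2$ by the elementary interpolation inequality $\Vert h\Vert_{L^{q}}\le \Vert h\Vert_{L^{p}}^{\theta}\Vert h\Vert_{L^{q'}}^{1-\theta}$ (which needs no operator bound at all) and $2\le p\le q<\infty$ by duality on the band-limited subspaces $X_{s,\tilde\ell}(G)$. Your interpolation arithmetic checks out in both remaining ranges, and the reduction $\Vert T_{\tilde\ell}f\Vert_{L^q}=\Vert P_{\tilde\ell}T_{\tilde\ell}f\Vert_{L^q}\le \Vert P_{\tilde\ell}\Vert_{L^p\to L^q}\Vert T_{\tilde\ell}f\Vert_{L^p}$ is clean; just take $\eta$ supported away from the origin so that $\tilde\rho(u)=u^{\gamma}\eta(u)$ is genuinely $C_c^{\infty}$ for non-integer $\gamma$, and note that for $\tilde\ell<1$ the operator $T_{\tilde\ell}$ vanishes identically, which disposes of small $\tilde\ell$.

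The one step you should not treat as free is the claim $\sup_{\tilde\ell\ge 1}\Vert\tilde\rho(\mathcal{M}/\tilde\ell)\Vert_{\mathscr{B}(L^p(G))}<\infty$. The claim is true, but it is a bona fide spectral multiplier theorem for the sub-Laplacian, resting on heat kernel bounds and the Carnot--Carath\'eodory doubling property (the setting of \cite{furioli} invoked for the Littlewood--Paley theorem \eqref{LitPaleyTh}, or Alexopoulos-type multiplier theorems). The global functional calculus of \cite{RuzhanskyWirth2014} that you cite only yields matrix-symbol estimates of type $\mathscr{S}^{0}_{1/\kappa,0}$ for this family, and feeding those into Theorem \ref{Mihlincondition} produces a loss of derivatives rather than a dilation-uniform $L^p\to L^p$ bound, so that reference does not by itself deliver what you need. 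In fairness, the paper's own proof buries exactly the same fact inside the unjustified ``$\asymp$'' in the chain estimating $\Vert(1+\mathcal{L}_{\textnormal{sub}})^{\gamma/2}T_{\tilde\ell}f\Vert_{L^p}$, so your write-up has the merit of making the crux visible; but you should replace the citation by one that actually proves the uniform multiplier bound.
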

\begin{proof} Note that the spectrum of $(1+\mathcal{L}_{\textnormal{sub}})^\frac{1}{2}$ lies in $[1,\infty),$ so that for $0\leq \tilde\ell<1,$ $$   [\tilde\ell/2,\tilde\ell]\cap \textnormal{Spec}[ (I_{d_\xi}+\widehat{ \mathcal{L}}_{\textnormal{sub}}(\xi))^\frac{1}{2}   ]=\emptyset.$$
Consequently, for $0\leq \tilde\ell < 1,$
$$ T_{\tilde\ell}=\int\limits_{1}^{\infty}t_{\tilde\ell}((1+\lambda)^{\frac{1}{2}})dE_\lambda \equiv 0,  $$
is the null operator. In this case we have nothing to prove. So, we will assume that $\tilde\ell\geq 1.$  If $1<p\leq 2\leq q<\infty,$  Lemma \ref{RuzAkhy} gives
\begin{align}
    \Vert T_{\tilde\ell} f\Vert_{{L^q}(G)}\leq C \Vert (1+\mathcal{L}_\textnormal{sub})^{  \gamma/2  }T_{\tilde\ell} f\Vert_{{L^p}(G)},
\end{align}
for $\gamma\geq Q(1/p-1/q).$ In particular, if  $\gamma= Q(1/p-1/q),$ and $\{dE^{\mathcal{M}}_{\lambda}\}_{1\leq \lambda<\infty}$ is the spectral measure associated with $\mathcal{M}=(1+\mathcal{L}_\textnormal{sub})^{  1/2  },$ by the functional calculus we can estimate
\begin{align*}
    \Vert (1+\mathcal{L}_\textnormal{sub})^{  \gamma/2  }T_{\tilde\ell} f\Vert_{{L^p}(G)}&=\left\Vert       \int\limits_{0}^{\infty}  \lambda^{\gamma}  t_{\tilde\ell}(\lambda) dE^{\mathcal{M}}_\lambda  f  \right\Vert_{{L^p}(G)}\\
    &=\tilde\ell^{\gamma }\left\Vert       \int\limits_{    \frac{\tilde\ell}{2} }^{\tilde\ell}  \tilde\ell^{-\gamma }\lambda^{\gamma} t_{\tilde\ell}(\lambda) dE^{\mathcal{M}}_\lambda  f  \right\Vert_{{L^p}(G)}\\
    &\asymp \tilde\ell^{\gamma } \left\Vert       \int\limits_{   \frac{\tilde\ell}{2} }^{\tilde\ell}  t_{\tilde\ell}(\lambda)  dE^{\mathcal{M}}_\lambda  f  \right\Vert_{{L^p}(G)} .
\end{align*} Consequently
\begin{align}
    \Vert  (1+\mathcal{L}_\textnormal{sub})^{  \gamma/2  }T_{\tilde\ell} f   \Vert_{{L^p}(G)}\lesssim  {\tilde\ell}^{\gamma} \left\Vert       \int\limits_{   \frac{\tilde\ell}{2} }^{\tilde\ell} t_{\tilde\ell}(\lambda)   dE_\lambda  f  \right\Vert_{{L^p}(G)}= {\tilde\ell}^{\gamma} \Vert T_{\tilde\ell} f \Vert_{{L^p}(G)},
\end{align} 

So, we obtain
\begin{equation}
    \Vert T_{\tilde\ell} f\Vert_{{L^q}(G)}\leq C {\tilde\ell}^{Q(\frac{1}{p}-\frac{1}{q}) } \Vert T_{\tilde\ell} f \Vert_{{L^p}(G)},\,\,1<p\leq 2\leq q<\infty.
\end{equation}

Now, we generalise this result to the complete range  $1<p\leq q\leq 2$ by using interpolation inequalities. In this case  from the inequality $1<p\leq q\leq 2\leq q'<\infty,$ we get  the following estimates,
\begin{equation}
    \Vert T_{\tilde\ell} f\Vert_{{L^{q'}}(G)}\leq C {\tilde\ell}^{Q(\frac{1}{p}-\frac{1}{q'})}\left\Vert        T_{\tilde\ell} f  \right\Vert_{{L^p}(G)},
\end{equation} and 
\begin{equation}
    \Vert T_{\tilde\ell} f\Vert_{{L^{q}}(G)}\leq \Vert        T_{\tilde\ell} f  \Vert_{{L^p}(G)}^\theta  \Vert        T_{\tilde\ell} f  \Vert_{{L^{q'}}(G)}^{1-\theta},\,\,1/q=\theta/p+(1-\theta)/q',
\end{equation} where we have used the interpolation inequality  (see e.g. Brezis \cite[Chapter 4]{Brezis})
\begin{equation}
    \Vert h\Vert_{{L^{r}}(X,\mu)}\leq \Vert        h \Vert_{{L^{r_0}}(X,\mu)}^\theta  \Vert        h  \Vert_{{L^{r_1}}(X,\mu)}^{1-\theta},\,\,1/r=\theta/r_0+(1-\theta)/r_1,\,\,1\leq r_0<r_1\leq \infty.
\end{equation}
So, we have
\begin{align*}
     \Vert T_{\tilde\ell} f\Vert_{{L^{q}}(G)}\lesssim  \Vert        T_{\tilde\ell} f  \Vert_{{L^p}(G)}^\theta {\tilde\ell}^{{Q(1-\theta)}(\frac{1}{p}-\frac{1}{q'})}\left\Vert        T_{\tilde\ell }f  \right\Vert_{{L^p}(G)}^{1-\theta} =   {\tilde\ell}^{{Q(1-\theta)}(\frac{1}{p}-\frac{1}{q'})}\Vert        T_{\tilde\ell} f  \Vert_{{L^p}(G)}. 
\end{align*} Since  $\theta$ satisfies $(1-\theta)/q'=1/q-\theta/p,$ from the identity
$$  {\tilde\ell}^{{Q(1-\theta)}(\frac{1}{p}-\frac{1}{q'})}=  {\tilde\ell}^{{Q}(\frac{1-\theta}{p}-\frac{1-\theta}{q'})}=  {\tilde\ell}^{{Q}(\frac{1-\theta}{p}-\frac{1}{q}+\frac{\theta}{p})}= {\tilde\ell}^{{Q}(\frac{1}{p}-\frac{1}{q})},$$ we complete the proof for this case. Now, if $2\leq p\leq q<\infty,$ the proof will be based in the action of the linear operator $T_{\tilde\ell}$ on the Banach spaces
\begin{equation}
    X_{s,{\tilde\ell}}(G):=\{f\in L^s(G): T_{\tilde\ell} f=f\},\,\, \tilde\ell\geq 1,\,\,1<s<\infty,
\end{equation} endowed with the norm
\begin{equation}
    \Vert f \Vert_{X_{s,\tilde\ell}(G)}:=\Vert f \Vert_{L^s(G)} =\left(\int\limits_{G}|f(x)|^sdx\right)^{\frac{1}{s}} .
\end{equation}
    
Putting $s=p'$ and $r=q'$ and taking into account that the parameters $r,s$ satisfy the inequality, $1<r\leq s\leq 2,$ we have the estimate
\begin{equation}
     \Vert f \Vert_{X_{s,\tilde\ell}(G)}=\Vert T_{\tilde\ell} f\Vert_{{L^s}(G)}\leq C{\tilde \ell}^{{Q}(\frac{1}{r}-\frac{1}{s}) } \Vert T_{\tilde\ell} f \Vert_{{L^r}(G)}=C {\tilde\ell}^{{Q}(\frac{1}{r}-\frac{1}{s}) } \Vert  f \Vert_{ X_{r,\tilde\ell}(G)   }.
\end{equation} Consequently,  $T_{\tilde\ell}:X_{r,\tilde\ell}(G) \rightarrow X_{s,\tilde\ell}(G)$ extends to a bounded operator with the operator norm satisfying 
\begin{equation}
    \Vert T_{\tilde\ell}\Vert_{\mathscr{B}(X_{r,\tilde\ell}(G) , X_{s,\tilde\ell}(G))}\leq C {\tilde\ell}^{{Q}(\frac{1}{r}-\frac{1}{s}) }.
\end{equation} By using the identification, $(X_{r,\tilde\ell}(G))^*=X_{q,\tilde\ell}(G)$ and $(X_{s,\tilde\ell}(G))^*=X_{p,\tilde\ell}(G)$ where $E^*$ denotes the dual of a Banach space $E,$ the argument of duality gives the boundedness of $T_{\tilde\ell}$ from $X_{p,\tilde\ell}(G)$ into $X_{q,\tilde\ell}(G)$ with operator norm satisfying, \begin{equation}
    \Vert T_{\tilde\ell}\Vert_{\mathscr{B}(X_{q,\tilde\ell}(G) , X_{p,\tilde\ell}(G))}\leq C {\tilde\ell}^{{Q}(\frac{1}{r}-\frac{1}{s}) }=C{\tilde\ell}^{{Q}(\frac{1}{q'}-\frac{1}{p'}) }=C{\tilde\ell}^{{Q}(\frac{1}{p}-\frac{1}{q}) }.
\end{equation} Thus, we obtain the estimate,
\begin{equation}
    \Vert T_{\tilde\ell} f\Vert_{{L^q}(G)}\leq C {\tilde\ell}^{{Q}(\frac{1}{p}-\frac{1}{q}) } \Vert T_{\tilde\ell} f \Vert_{{L^p}(G)},\,\,2\leq p\leq q <\infty.
\end{equation} So, we finish the proof.
\end{proof}

\begin{corollary} Let $G$ be a compact Lie group and let $t_{\tilde\ell}$  be a measurable function compactly supported in $[\frac{1}{2}\tilde\ell,\tilde\ell],$ $\tilde\ell>0.$ If $T_{\tilde\ell}$ is the operator defined by \eqref{functionofL}, then $T_{\tilde\ell}$ from $X_{p,\tilde\ell}(G)$ into $X_{q,\tilde\ell}(G)$ extends to a bounded operator, with the operator norm satisfying \begin{equation}
    \Vert T_{\tilde\ell}\Vert_{\mathscr{B}(X_{q,\tilde\ell}(G) , X_{p,\tilde\ell}(G))}\leq C{\tilde\ell}^{{Q}(\frac{1}{p}-\frac{1}{q}) },
\end{equation}  for all $1\leq p\leq q<\infty.$ Moreover, for all $1\leq p< \infty,$ $T_{\tilde\ell}$ from $X_{p,\tilde\ell}(G)$ into $X_{p,\tilde\ell}(G)$ extends to a bounded operator, with the operator norm satisfying, \begin{equation}
    \Vert T_{\tilde\ell}\Vert_{\mathscr{B}(X_{p,\tilde\ell}(G) , X_{p,\tilde\ell}(G))}\leq C.
\end{equation}

\end{corollary}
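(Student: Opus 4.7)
The plan is to read the corollary as a reformulation of Theorem~\ref{Nikolskii} in the language of operator norms on the fixed-point spaces $X_{p,\tilde\ell}(G)$, supplemented by two endpoint considerations: the diagonal case $p=q$ and the boundary case $p=1$.

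First I would dispose of the diagonal case $p=q$. By the very definition of $X_{p,\tilde\ell}(G)$, for any $f$ in this space one has $T_{\tilde\ell}f=f$; hence $T_{\tilde\ell}$ restricted to $X_{p,\tilde\ell}(G)$ is the identity and its operator norm equals~$1$. This yields the second assertion of the corollary and also covers the $p=q$ sub-case of the first assertion.

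Next, for $1<p<q<\infty$, I would take $f\in X_{p,\tilde\ell}(G)$ and apply Theorem~\ref{Nikolskii}:
\[
\|T_{\tilde\ell}f\|_{L^q(G)}\leq C\,\tilde\ell^{Q(1/p-1/q)}\|T_{\tilde\ell}f\|_{L^p(G)}=C\,\tilde\ell^{Q(1/p-1/q)}\|f\|_{L^p(G)},
\]
the last equality coming from the fixed-point property $T_{\tilde\ell}f=f$. Since $f=T_{\tilde\ell}f$ then belongs to $L^q(G)$ and still satisfies $T_{\tilde\ell}f=f$ in $L^q(G)$, we conclude $T_{\tilde\ell}f\in X_{q,\tilde\ell}(G)$ with the declared operator-norm bound. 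This is the portion of the corollary that is essentially a cosmetic rewriting of Theorem~\ref{Nikolskii}.

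The remaining piece, and the main obstacle, is the endpoint $p=1$, which is not covered directly by Theorem~\ref{Nikolskii}. The key structural observation is that $T_{\tilde\ell}$ is a spectral multiplier of $\mathcal{L}_\textnormal{sub}$ with symbol compactly supported in $[\tilde\ell/2,\tilde\ell]$; since $\mathcal{L}_\textnormal{sub}$ has discrete spectrum with eigenvalues of finite multiplicity, the range of $T_{\tilde\ell}$ is contained in the finite-dimensional space $V_{\tilde\ell}\subset C^\infty(G)$ spanned by those eigenfunctions whose eigenvalues lie in the support of $t_{\tilde\ell}$. In particular $X_{1,\tilde\ell}(G)=X_{p_0,\tilde\ell}(G)$ as vector spaces for any $1<p_0<\infty$, so the $p=1$ estimate reduces to the already established case $p=p_0>1$ of Theorem~\ref{Nikolskii}. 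The resulting $p_0$-dependent constant can then be traded for the factor $\tilde\ell^{Q(1-1/q)}$ by means of the Weyl-type estimate $\dim V_{\tilde\ell}\lesssim\tilde\ell^Q$ associated with the Hausdorff dimension $Q$ of the sub-Laplacian, either by an explicit finite-dimensional comparison of $L^p$-norms on $V_{\tilde\ell}$ or by letting $p_0\to 1^+$ with careful bookkeeping of the interpolation constants appearing in the proof of Theorem~\ref{Nikolskii}.
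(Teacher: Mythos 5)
Your treatment of the cases $1<p\le q<\infty$ and $p=q$ is correct and is exactly what the paper intends: the corollary is stated without proof, and for these ranges it is a verbatim translation of Theorem \ref{Nikolskii} via the fixed-point property $T_{\tilde\ell}f=f$ on $X_{p,\tilde\ell}(G)$ (indeed the spaces $X_{s,\tilde\ell}(G)$ and the bound $\Vert T_{\tilde\ell}\Vert_{\mathscr{B}(X_{r,\tilde\ell},X_{s,\tilde\ell})}\le C\tilde\ell^{Q(1/r-1/s)}$ already appear inside the proof of that theorem). Your observation that $T_{\tilde\ell}$ restricted to $X_{p,\tilde\ell}(G)$ is the identity settles the second assertion immediately.

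The genuine issue is the endpoint $p=1<q$, which you rightly isolate as the only part not covered by Theorem \ref{Nikolskii} (the theorem assumes $p>1$, the corollary claims $1\le p$). Your reduction to a finite-dimensional space is sound: the sub-Laplacian has discrete spectrum, only finitely many eigenvalues of $(1+\mathcal{L}_{\mathrm{sub}})^{1/2}$ lie in $[\tilde\ell/2,\tilde\ell]$, and hence $X_{1,\tilde\ell}(G)=X_{p_0,\tilde\ell}(G)$ as sets. But neither of your two proposed ways of recovering the constant $C\tilde\ell^{Q(1-1/q)}$ is actually executed, and the second one does not work: sending $p_0\to 1^{+}$ requires the constant $C_{p_0,q,Q}$ of Theorem \ref{Nikolskii} (equivalently of Lemma \ref{RuzAkhy}) to remain bounded as $p_0\to 1$, which is neither established in the paper nor expected for a Sobolev-type embedding. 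The first route can be completed, but it needs two ingredients you only gesture at: the sub-Riemannian Weyl law $\dim V_{\tilde\ell}\lesssim \tilde\ell^{Q}$, and the fact that the spectral projection $P_{\tilde\ell}$ onto $V_{\tilde\ell}$ is a convolution operator, so its kernel satisfies $|K_{\tilde\ell}(x,y)|\le K_{\tilde\ell}(x,x)\equiv \dim V_{\tilde\ell}$; this yields $\Vert f\Vert_{L^{\infty}}\le (\dim V_{\tilde\ell})\Vert f\Vert_{L^{1}}$ for $f\in V_{\tilde\ell}$ and then $\Vert f\Vert_{L^{q}}\le (\dim V_{\tilde\ell})^{1-1/q}\Vert f\Vert_{L^{1}}$ by log-convexity of the $L^{p}$ norms, which is the desired estimate. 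To be fair, the paper offers no proof of the corollary at all and its own Theorem \ref{Nikolskii} does not reach $p=1$ either, so on this endpoint you are attempting to justify more than the source does; as written, though, that part of your argument remains a sketch rather than a proof.
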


\section{Subelliptic Besov spaces}\label{SubBesovSpacesO}

In this section we introduce and  examine some embedding properties for those Besov spaces associated to sub-Laplacians on compact Lie groups which we call subelliptic Besov spaces. Troughout this section, $X=\{X_1,\cdots,X_k\}$ is a system of vector fields satisfying the H\"ormander condition of order $\kappa\in\mathbb{N},$ and by simplicity throughout this work  we will use the following notation $\mathcal{L}\equiv \mathcal{L}_{\textnormal{sub}}=-\sum_{j=1}^k X_j^2$ for  the associated sub-Laplacian. This condition guarantees the hypoellipticity of $\mathcal{L}$ (see H\"ormander \cite{Hormander1967}).

\subsection{Motivation and definition of subelliptic Besov spaces}
The main tool in the formulation of the Besov spaces is the notion of dyadic decompositions. So, if $\psi_{\tilde\ell}$ is the characteristic function of the interval $I_{\tilde\ell}:=[2^{\tilde\ell},2^{\tilde\ell+1}),$ for $\tilde\ell\in \mathbb{N}_0,$  we define the operator
\begin{equation}\label{functionofL''}
    \psi_{\tilde\ell}(D) f(x):=\sum_{ [\xi]\in \widehat{G}   }d_\xi \textnormal{Tr}[\xi(x) \psi_{\tilde\ell}((I_{d_\xi}+\widehat{ \mathcal{L}}(\xi))^\frac{1}{2})\widehat{f}(\xi) ],\,\,f\in C^\infty(G).
\end{equation} If we denote by $\{E_\lambda\}_{0\leq \lambda<\infty},$ $E_\lambda:=E_{[0,\lambda)},$ the spectral measure associated to the subelliptic Bessel potential $B_{ \mathcal{L} }=(1+\mathcal{L})^\frac{1}{2},$ we have
$
   \psi_{\tilde\ell}(D)=E_{2^{\tilde\ell+1}} -E_{2^{\tilde\ell}}.
$
From the orthogonality of the spectral projections, for every $f\in L^2(G)$ we have,
\begin{equation}
    \Vert f\Vert_{L^2(G)}=\left\Vert \sum_{\tilde\ell=0}^\infty \psi_{\tilde\ell}(D)f\right\Vert_{L^2(G)}=\left( \sum_{\tilde\ell=0}^\infty\Vert \psi_{\tilde\ell}(D)f \Vert_{L^2(G)}^2 \right)^{\frac{1}{2}}.
\end{equation}
This simple analysis shows that we can measure the square-integrability of a function $f$ using the family of operators $\psi_{\tilde\ell}(D),$ $\tilde\ell\in\mathbb{N}_0.$ To measure the regularity of functions/distributions, it is usual to use  Sobolev spaces, in our case, associated to sub-Laplacians. But, if we want to measure at the same time, the regularity and the integrability of functions/distributions, Besov spaces are the appearing spaces.  By following \cite{GarettoRuzhansky2015}, the subelliptic Sobolev space $H^{s,\mathcal{L}}(G)$ of order $s\in \mathbb{R},$ is defined by the condition
\begin{equation}\label{subellipticSobolevspaxce}
f\in H^{s,\mathcal{L}}(G) \textnormal{  if and only if }\Vert f \Vert_{ H^{s,\mathcal{L}}(G)   }:=\Vert (1+\mathcal{L})^\frac{s}{2} f\Vert_{L^2(G)}<\infty.
\end{equation}
So, 
in terms of the family $\psi_{\tilde\ell}(D),$ $\tilde\ell\in\mathbb{N}_0,$  we can write,
\begin{equation}
 \Vert f \Vert_{ H^{s,\mathcal{L}}(G)   }=\left\Vert \sum_{\tilde\ell=0}^\infty \psi_{\tilde\ell}(D)(1+\mathcal{L})^\frac{s}{2}f \right\Vert_{L^2(G)}=\left( \sum_{\tilde\ell=0}^\infty\Vert \psi_{\tilde\ell}(D)(1+\mathcal{L})^\frac{s}{2}f \Vert_{L^2(G)}^2 \right)^{\frac{1}{2}}.    
\end{equation} This allows us to write
\begin{align*}
    \Vert \psi_{\tilde\ell}(D)(1+\mathcal{L})^\frac{s}{2}f \Vert_{L^2(G)} &:=\Vert [E_{2^{\tilde\ell+1}} -E_{2^{\tilde\ell}}](1+\mathcal{L})^\frac{s}{2}f \Vert_{L^2(G)}\\
    &=\left\Vert \int_{2^{\tilde\ell}}^{2^{\tilde\ell+1}}(1+\lambda)^s dE_\lambda f \right\Vert_{L^2(G)}\\
    &= 2^{s\tilde\ell}\left\Vert \int_{2^{\tilde\ell}}^{2^{\tilde\ell+1}}(2^{-\tilde\ell}(1+\lambda))^s dE_\lambda f \right\Vert_{L^2(G)}\\
    &\asymp  2^{s\tilde\ell}\left\Vert \int_{2^{\tilde\ell}}^{2^{\tilde\ell+1}} dE_\lambda f \right\Vert_{L^2(G)}\\
    &=2^{s\tilde\ell}\Vert \psi_{\tilde\ell}(D)f \Vert_{L^2(G)}.
\end{align*} So, we obtain the following estimate
  \begin{equation}\label{HsBesov}
 \Vert f \Vert_{ H^{s,\mathcal{L}}(G)   }\asymp \left( \sum_{\tilde\ell=0}^\infty  2^{2\tilde\ell s}\Vert \psi_{\tilde\ell}(D)f \Vert_{L^2(G)}^2 \right)^{\frac{1}{2}}=:\Vert f\Vert_{B^{s,\mathcal{L}}_{2,2}(G)}.    
\end{equation}  We then deduce that the norm appearing in  the right hand side of \eqref{HsBesov} defines an equivalent norm for the Sobolev space $H^{s,\mathcal{L}}(G).$ With this discussion in mind, Besov spaces can be introduced by the varying of  the parameters measuring the integrability and the regularity of functions/distributions in \eqref{HsBesov}. So, for $s\in\mathbb{R},$ $0<q<\infty,$ the subelliptic Besov space $B^{s,\mathcal{L}}_{p,q}(G)$ consists of those functions/distributions satisfying
\begin{equation}
   \Vert f\Vert_{ B^{s,\mathcal{L}}_{p,q}(G)  } =\left( \sum_{\tilde\ell=0}^\infty  2^{\tilde\ell q s}\Vert \psi_{\tilde\ell}(D)f \Vert_{L^p(G)}^q \right)^{\frac{1}{q}}<\infty,
\end{equation} for $0<p\leq \infty,$ with the following modification
\begin{equation}
   \Vert f\Vert_{ B^{s,\mathcal{L}}_{p,\infty}(G)  } = \sup_{\tilde\ell\in\mathbb{N}_0}  2^{\tilde\ell  s}\Vert \psi_{\tilde\ell}(D)f \Vert_{L^p(G)} <\infty,
\end{equation} when $q=\infty.$
    Clearly, from the earlier discussion, for every $s\in \mathbb{R},$  $H^{s,\mathcal{L}}(G)=B^{s,\mathcal{L}}_{2,2}(G),$ and particularly, $L^{2}(G)=B^{0,\mathcal{L}}_{2,2}(G).$ Sobolev spaces modelled on $L^p$-spaces and associated to the sub-Laplacian will be denoted by $L^{p,\mathcal{L}}_r(G),$ and defined by the norm
    \begin{equation}
\Vert f \Vert_{{L}_r^{p,\mathcal{L}}}:=\Vert (1+\mathcal{L})^\frac{r}{2} f\Vert_{L^{p}}.
\end{equation} Often, we will show that Besov spaces $B^{s,\mathcal{L}}_{p,q}$ are intermediate spaces between the Sobolev spaces $L^{p,\mathcal{L}}_r(G).$ So, we will show that Besov spaces can be obtained from the interpolation of Sobolev spaces.
\begin{remark}[Besov spaces associated to the Laplacian] Let $G$ be a compact Lie group. The  positive Laplacian  on $G$  can be defined as 
\begin{equation}\label{laplacianonG}
    \mathcal{L}_{{G}}:=-(X_1^2+\cdots +X_n^2),\,\,n=\dim(G),
\end{equation} where $X=\{X_{i}:1\leq i\leq n\}$ is a basis for the Lie algebra $\mathfrak{g}=\textnormal{Lie}(G)$ of $G.$ The global symbol associated to the Laplacian is given by 
\begin{equation}
    \widehat{\mathcal{L}}_{{G}}(\xi)=\lambda_{[\xi]}I_{d_\xi},
\end{equation} where $\{\lambda_{[\xi]}:[\xi]\in \widehat{G}  \}$ is the spectrum of the Laplacian. This sequence has the property that $\mathcal{L}_{G}\xi_{ij}=\lambda_{[\xi]}\xi_{ij},$ $1\leq i,j\leq d_\xi,$ where $d_\xi$ is the dimension of the representation $\xi,$ for every $[\xi]\in \widehat{G}.$ This means that  $\lambda_{[\xi]}$ is an eigenvalue of the Laplacian, with the corresponding eigenspace $E_\xi:=\{\xi_{ij}(x) :1\leq i,j\leq d_\xi \}.$ Also, we note that the multiplicity of the eigenvalue $\lambda_{[\xi]}$ is $d_\xi^2.$ Putting $\langle \xi\rangle:=(1+\lambda_{[\xi]})^{\frac{1}{2}},$ we have
$
    (I_\xi+\widehat{\mathcal{L}}_{{G}}(\xi))^{\frac{1}{2}}=\langle \xi\rangle I_{d_\xi}.
$ Consequently, for every $[\xi]\in \widehat{G}$ and $\tilde\ell\in \mathbb{N}_{0},$ the symbol of the operator
$
   {\psi_{\tilde\ell}((I+{\mathcal{L}}_{{G}})^{\frac{1}{2}})}
$ is $\psi_{\tilde\ell}(\langle \xi\rangle)I_{d_\xi}. $ This argument implies that the sequence of operators $\{\psi_{\tilde\ell}(D)  \}_{\tilde\ell}$ is defined by
\begin{equation}\label{functionofLaplacian}
    \psi_{\tilde\ell}((1+\mathcal{L}_G)^\frac{1}{2}) f(x):=\sum_{ 2^{\tilde\ell}\leq \langle \xi\rangle<2^{\tilde\ell+1}   }d_\xi \textnormal{Tr}[\xi(x) \widehat{f}(\xi) ],\,\,f\in C^\infty(G).
\end{equation} We conclude that the Besov spaces on compact Lie groups associated to the Laplacian are defined by 
\begin{equation}\label{q<inftyLaP'}
   \Vert f\Vert_{ B^{s,\mathcal{L}_G}_{p,q}(G)  } =\left( \sum_{\tilde\ell=0}^\infty  2^{\tilde\ell q s}\left\Vert  \sum_{ 2^{\tilde\ell}\leq \langle \xi\rangle<2^{\tilde\ell+1}   }d_\xi \textnormal{Tr}[\xi(x) \widehat{f}(\xi) ]\right\Vert_{L^p(G)}^q \right)^{\frac{1}{q}}<\infty,
\end{equation} for $0<p\leq \infty,$ and
\begin{equation}\label{q=inftyLaP'}
   \Vert f\Vert_{ B^{s,\mathcal{L}_G}_{p,\infty}(G)  } = \sup_{\tilde\ell\in\mathbb{N}_0}  2^{\tilde\ell  s}\left\Vert  \sum_{ 2^{\tilde\ell}\leq \langle \xi\rangle<2^{\tilde\ell+1}   }d_\xi \textnormal{Tr}[\xi(x) \widehat{f}(\xi) ]\right\Vert_{L^p(G)} <\infty,
\end{equation} for $q=\infty.$ The definitions \eqref{q<inftyLaP'} and \eqref{q=inftyLaP'} were introduced by E. Nursultanov, S. Tikhonov and the second author in \cite{NurRuzTikhBesov2015} and consistently developed on arbitrary compact homogeneous manifolds in \cite{NurRuzTikhBesov2017}. 
\end{remark}

\begin{remark}[Fourier description for subelliptic Besov spaces]
Now, we discuss the formulation of subelliptic Besov spaces in terms of the Fourier analysis associated to the sub-Laplacian $\mathcal{L}=-\sum_{j=1}^k X_j^2.$ The self-adjointness of this operator, implies that the global symbol $\widehat{\mathcal{L}}(\xi)$ is symmetric and it can be assumed diagonal for every $[\xi]\in \widehat{G},$ under a suitable choose of the basis in the representation spaces. So, there exists a non-negative sequence $\{\nu_{ii}\}_{i=1}^{d_\xi},$ such that
\begin{equation}
\widehat{\mathcal{L}}(\xi):=(\mathcal{L}\xi)(e_G)   =\begin{bmatrix}
    \nu_{11}(\xi)^2 & 0 & 0 & \dots  & 0 \\
     0 & \nu_{22}(\xi)^2& 0 & \dots  & 0 \\
    \vdots & \vdots & \vdots & \ddots & \vdots \\
   0 & 0 &0 & \dots  & \nu_{d_\xi d_\xi}(\xi)^2
\end{bmatrix}=:\textnormal{diag}[\nu_{ii}(\xi)^2]_{1\leq i\leq d_\xi}.
\end{equation}
So, the symbol $\widehat{M}(\xi):=(M\xi)(e_G)$ of the operator $(1+\mathcal{L})^{\frac{1}{2}}$ is given by,
\begin{equation}
\widehat{M}(\xi)  =\begin{bmatrix}
    (1+\nu_{11}(\xi)^2)^{\frac{1}{2}} & 0 & 0 & \dots  & 0 \\
     0 & (1+\nu_{22}(\xi)^2)^{\frac{1}{2}}  & 0 & \dots  & 0 \\
    \vdots & \vdots & \vdots & \ddots & \vdots \\
   0 & 0 &0 & \dots  & (1+\nu_{d_\xi d_\xi}(\xi)^2)^{\frac{1}{2}}
\end{bmatrix}.
\end{equation} We  write  $\widehat{M}(\xi)= \textnormal{diag}[(1+\nu_{ii}(\xi)^2)^{\frac{1}{2}}]_{1\leq i\leq d_\xi} .  $   If  $\psi_{\tilde\ell}(\xi)$ denotes the symbol of the operator $\psi_{\tilde\ell}(D),$ then we have
\begin{equation}
    \psi_{\tilde\ell}(\xi)=\textnormal{diag}[\psi_{\tilde\ell}((1+\nu_{ii}(\xi)^2)^{\frac{1}{2}})]_{1\leq i\leq d_\xi},\,\,\tilde\ell\in\mathbb{N}_0,\,\,[\xi]\in \widehat{G}.
\end{equation} We conclude that the analogues of  definitions \eqref{q<inftyLaP'} and \eqref{q=inftyLaP'} for subelliptic Besov spaces can be re-written as
\begin{equation}\label{q<inftyLaP}
   \Vert f\Vert_{ B^{s,\mathcal{L}}_{p,q}(G)  }^q =\sum_{\tilde\ell=0}^\infty  2^{\tilde\ell q s}\left\Vert  \sum_{ [\xi]\in \widehat{G}}d_\xi \textnormal{Tr}[\xi(x)\textnormal{diag}[\psi_{\tilde\ell}((1+\nu_{ii}(\xi)^2)^{\frac{1}{2}})] \widehat{f}(\xi) ]\right\Vert_{L^p(G)}^q <\infty,
\end{equation} for $0<p\leq \infty,$ and
\begin{equation}\label{q=inftyLaP}
   \Vert f\Vert_{ B^{s,\mathcal{L}}_{p,\infty}(G)  } = \sup_{\tilde\ell\in\mathbb{N}_0}  2^{\tilde\ell  s}\left\Vert  \sum_{ [\xi]\in \widehat{G}   }d_\xi \textnormal{Tr}[\xi(x) \textnormal{diag}[\psi_{\tilde\ell}((1+\nu_{ii}(\xi)^2)^{\frac{1}{2}})]\widehat{f}(\xi) ]\right\Vert_{L^p(G)} <\infty,
\end{equation} for $q=\infty.$

\end{remark}

\subsection{Embedding properties for subelliptic Besov spaces}

In this section we study some embedding properties between Besov spaces and their interplay with Sobolev spaces modeled on $L^p$-spaces.  To accomplish this, we will use  the Littlewood-Paley theorem,
\begin{equation}\label{LitPaleyTh}
  \Vert f \Vert_{F^{0,\mathcal{L}}_{p,2}(G)    }:= \Vert   [\sum_{s\in\mathbb{N}_0}  |\psi_{s}(D)f(x)       |^{2}]^{\frac{1}{2}}\Vert_{L^{p}(G)}
\asymp \Vert f \Vert_{L^p(G)}.  \end{equation} A proof of this result can be found in Furioli, Melzi and Veneruso \cite{furioli} where the Littlewood-Paley theorem for sub-Laplacians was proved for Lie groups of polynomial growth. 

\begin{theorem}\label{Emb1}
Let $G$ be a compact Lie group and let us denote by  $Q$  the Hausdorff dimension of $G$ associated to the control distance associated to the sub-Laplacian $\mathcal{L}=-(X_1^2+\cdots +X_k^2),$ where the system of vector fields $X=\{X_i\}$ satisfies the H\"ormander condition.  Then 
\begin{itemize}
\item[(1)] ${B}^{r+\varepsilon,\mathcal{L}}_{p,q_1}(G)\hookrightarrow {B}^{r,\mathcal{L}}_{p,q_1}(G)\hookrightarrow {B}^{r,\mathcal{L}}_{p,q_2}(G)\hookrightarrow {B}^{r,\mathcal{L}}_{p,\infty}(G),$  $\varepsilon>0,$ $0<p\leq \infty,$ $0<q_{1}\leq q_2\leq \infty.$\\
\item[(2)]  ${B}^{r+\varepsilon,\mathcal{L}}_{p,q_1}(G)\hookrightarrow {B}^{r,\mathcal{L}}_{p,q_2}(G)$, $\varepsilon>0,$ $0<p\leq \infty,$ $1\leq q_2<q_1<\infty.$\\
\item[(3)]  ${B}^{r_1,\mathcal{L}}_{p_1,q}(G)\hookrightarrow {B}^{r_2,\mathcal{L}}_{p_2,q}(G),$  $1\leq p_1\leq p_2\leq \infty,$ $0<q<\infty,$ $r_{1}\in\mathbb{R},$ and $r_2=r_1- {Q}(\frac{1}{p_1}-\frac{1}{p_2}).$\\
\item[(4)] ${H}^{r,\mathcal{L}}(G)={B}^{r,\mathcal{L}}_{2,2}(G)$ and ${B}^{r,\mathcal{L}}_{p,p}(G)\hookrightarrow {L}_r^{p,\mathcal{L}}(G)\hookrightarrow {B}^{r,\mathcal{L}}_{p,2}(G),$ $1<p\leq 2.$\\
\item[(5)] ${B}^{r,\mathcal{L}}_{p,1}(G)\hookrightarrow L^{q}(G), $ $1\leq p\leq q\leq \infty,$ $r= {Q}(\frac{1}{p}-\frac{1}{q})$ and $L^{q}(G)\hookrightarrow {B}^{0,\mathcal{L}}_{q,\infty}(G)$ for $1<q\leq \infty.$
\end{itemize}
\end{theorem}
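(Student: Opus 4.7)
\textbf{Plan of proof for Theorem \ref{Emb1}.}

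The five statements split naturally according to which tool drives them: parts (1) and (2) are purely sequence-space inclusions, parts (3) and (5) are the substantive consequences of the subelliptic Nikolskii inequality (Theorem \ref{Nikolskii}), and part (4) is where the Littlewood--Paley theorem \eqref{LitPaleyTh} enters. I will treat them in that order.

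For part (1), since the dyadic index $\tilde\ell$ runs over $\mathbb{N}_0$, the three inclusions are, respectively: the weight inequality $2^{\tilde\ell r}\leq 2^{\tilde\ell(r+\varepsilon)}$; the standard monotonicity $\|\cdot\|_{\ell^{q_2}}\leq \|\cdot\|_{\ell^{q_1}}$ for $q_1\leq q_2$; and the trivial $\ell^q\hookrightarrow \ell^\infty$. For part (2), write $2^{\tilde\ell q_2 r}a_{\tilde\ell}^{q_2}=2^{-\tilde\ell q_2\varepsilon}\cdot (2^{\tilde\ell(r+\varepsilon)}a_{\tilde\ell})^{q_2}$ with $a_{\tilde\ell}=\|\psi_{\tilde\ell}(D)f\|_{L^p}$, and apply H\"older's inequality with exponents $q_1/q_2$ and its dual; the geometric factor $\sum_{\tilde\ell}2^{-\tilde\ell\varepsilon q_2 q_1/(q_1-q_2)}$ is summable, which yields the embedding.

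For part (3), observe that $\psi_{\tilde\ell}$ is supported in $[2^{\tilde\ell},2^{\tilde\ell+1})$, so the corresponding operator $\psi_{\tilde\ell}(D)$ falls under the hypothesis of Theorem \ref{Nikolskii} with parameter $\tilde\ell\asymp 2^{\tilde\ell}$. Hence
\[
\|\psi_{\tilde\ell}(D)f\|_{L^{p_2}}\lesssim 2^{\tilde\ell Q(1/p_1-1/p_2)}\|\psi_{\tilde\ell}(D)f\|_{L^{p_1}},
\]
and multiplying by $2^{\tilde\ell r_2}$ with $r_2=r_1-Q(1/p_1-1/p_2)$ absorbs the Nikolskii factor. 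Taking the $\ell^q$-norm in $\tilde\ell$ yields $\|f\|_{B^{r_2,\mathcal{L}}_{p_2,q}}\lesssim \|f\|_{B^{r_1,\mathcal{L}}_{p_1,q}}$. For the first statement in (5), the same Nikolskii estimate combined with the triangle inequality in $L^q$ gives
\[
\|f\|_{L^q}\leq \sum_{\tilde\ell}\|\psi_{\tilde\ell}(D)f\|_{L^q}\lesssim \sum_{\tilde\ell}2^{\tilde\ell Q(1/p-1/q)}\|\psi_{\tilde\ell}(D)f\|_{L^p}=\|f\|_{B^{r,\mathcal{L}}_{p,1}},
\]
while $L^q\hookrightarrow B^{0,\mathcal{L}}_{q,\infty}$ follows from the pointwise bound $|\psi_{\tilde\ell}(D)f(x)|\leq \bigl(\sum_m|\psi_m(D)f(x)|^2\bigr)^{1/2}$ and \eqref{LitPaleyTh}, yielding $\sup_{\tilde\ell}\|\psi_{\tilde\ell}(D)f\|_{L^q}\lesssim \|f\|_{L^q}$ for $1<q<\infty$.

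Part (4) is the most delicate step. The identification $H^{r,\mathcal{L}}(G)=B^{r,\mathcal{L}}_{2,2}(G)$ is already established in \eqref{HsBesov}. For the embeddings between $L^{p,\mathcal{L}}_r(G)$ and Besov spaces, set $g:=(1+\mathcal{L})^{r/2}f$ and use the spectral identity $\|\psi_{\tilde\ell}(D)g\|_{L^p}\asymp 2^{\tilde\ell r}\|\psi_{\tilde\ell}(D)f\|_{L^p}$, justified exactly as in the calculation preceding \eqref{HsBesov}. The Littlewood--Paley equivalence \eqref{LitPaleyTh} applied to $g$ then gives
\[
\|f\|_{L^{p,\mathcal{L}}_r}=\|g\|_{L^p}\asymp \Bigl\|\bigl(\textstyle\sum_{\tilde\ell}|\psi_{\tilde\ell}(D)g|^2\bigr)^{1/2}\Bigr\|_{L^p}.
\]
For $1<p\leq 2$ the pointwise inequality $(\sum a_{\tilde\ell}^2)^{p/2}\leq \sum a_{\tilde\ell}^p$ (since $p/2\leq 1$) and integration produce $\|f\|_{L^{p,\mathcal{L}}_r}^p\lesssim \sum_{\tilde\ell}\|\psi_{\tilde\ell}(D)g\|_{L^p}^p\asymp \|f\|_{B^{r,\mathcal{L}}_{p,p}}^p$, giving $B^{r,\mathcal{L}}_{p,p}\hookrightarrow L^{p,\mathcal{L}}_r$. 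For the other direction $L^{p,\mathcal{L}}_r\hookrightarrow B^{r,\mathcal{L}}_{p,2}$, apply Minkowski's integral inequality in the form $\|\cdot\|_{\ell^2}\|\cdot\|_{L^p}\leq \|\cdot\|_{L^p}\|\cdot\|_{\ell^2}$, valid for $p\leq 2$, to obtain
\[
\Bigl(\textstyle\sum_{\tilde\ell}\|\psi_{\tilde\ell}(D)g\|_{L^p}^2\Bigr)^{1/2}\leq \Bigl\|\bigl(\textstyle\sum_{\tilde\ell}|\psi_{\tilde\ell}(D)g|^2\bigr)^{1/2}\Bigr\|_{L^p}\asymp \|g\|_{L^p}.
\]

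The main obstacle is making the spectral/multiplier identification $\psi_{\tilde\ell}(D)(1+\mathcal{L})^{r/2}f\asymp 2^{\tilde\ell r}\psi_{\tilde\ell}(D)f$ rigorous at the level of $L^p$ norms (rather than pointwise or in $L^2$), since in general functional-calculus identities for a sub-Laplacian need not respect $L^p$ norms; here one relies on the fact that the multiplier $\lambda\mapsto 2^{-\tilde\ell r}\lambda^r\psi_{\tilde\ell}(\lambda)$ is supported in the dyadic annulus $[2^{\tilde\ell},2^{\tilde\ell+1})$ and uniformly bounded, so the equivalence at the $L^p$-level follows from the $L^p$-continuity of the associated spectral multiplier, a property that is built into the Littlewood--Paley framework invoked from \cite{furioli}.
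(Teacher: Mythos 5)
Your proposal is essentially the paper's proof: parts (1)--(3) and the first half of (5) coincide step by step (monotonicity of weights and of $\ell^q$-norms, H\"older with exponents $q_1/q_2$ and its conjugate, and the subelliptic Nikolskii inequality of Theorem \ref{Nikolskii} applied to the dyadic pieces), and the embedding $L^{p,\mathcal{L}}_r\hookrightarrow B^{r,\mathcal{L}}_{p,2}$ in (4) is proved exactly as in the paper via Minkowski's integral inequality with exponent $2/p\geq 1$ followed by the Littlewood--Paley theorem \eqref{LitPaleyTh}. Two points differ. For $B^{r,\mathcal{L}}_{p,p}\hookrightarrow L^{p,\mathcal{L}}_r$ you invoke the lower Littlewood--Paley bound for $g=(1+\mathcal{L})^{r/2}f$ together with $(\sum a_\ell^2)^{p/2}\leq\sum a_\ell^p$; the paper instead decomposes $\int_0^\infty\lambda^r\,dE^{\mathcal{M}}(\lambda)f$ over dyadic annuli and uses a triangle-type inequality, avoiding the square function for this direction. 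Both routes rest on the same dyadic rescaling $\Vert\psi_{\tilde\ell}(D)(1+\mathcal{L})^{r/2}f\Vert_{L^p}\asymp 2^{\tilde\ell r}\Vert\psi_{\tilde\ell}(D)f\Vert_{L^p}$, and you are right to flag that this $L^p$-equivalence is the real load-bearing step; the paper uses it just as freely. For $L^q\hookrightarrow B^{0,\mathcal{L}}_{q,\infty}$ you use the pointwise domination of each dyadic piece by the square function plus \eqref{LitPaleyTh}; the paper (Remark \ref{RemarkExtra1}) instead inserts a smooth partition $\tilde\psi_j$ and uses the uniform $L^q$-boundedness of $\tilde\psi_j((1+\mathcal{L})^{1/2})$ from Lemma \ref{DelgadoRuzhanskyModifiedLemma}. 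Your route is shorter but, as you note, only covers $1<q<\infty$: the Littlewood--Paley equivalence is not available at $q=\infty$, whereas the paper's multiplier lemma does include $p=\infty$, so to obtain the stated range $1<q\leq\infty$ you would need to supplement your argument at that endpoint (e.g.\ by the paper's smooth-partition device). Apart from that endpoint, the proposal is correct.
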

\begin{proof}
For the proof of $(1)$ let us fix $\varepsilon>0,$ $0<p\leq \infty,$ and $0<q_{1}\leq q_2\leq \infty.$ We observe that
\begin{equation}
\Vert f \Vert_{{B}^{r,\mathcal{L}}_{p,\infty}}=\sup_{s\in\mathbb{N}_0}2^{r s }\Vert \psi_{s}(D) f\Vert_{L^{p}}\leq \Vert \{ 2^{ s  r}\Vert \psi_{s}(D)f\Vert_{L^{p}} \}_{s\in\mathbb{N}_0} \Vert_{l^{q_2}(\mathbb{N}_0)}\equiv\Vert f \Vert_{{B}^{r,\mathcal{L}}_{p,q_2}}.
\end{equation}
So, we have that ${B}^{r,\mathcal{L}}_{p,q_2}(G) \hookrightarrow {B}^{r,\mathcal{L}}_{p,\infty}(G).$ From the estimates
\begin{equation}
\Vert f \Vert_{{B}^{r,\mathcal{L}}_{p,q_2}} \leq \Vert \{ 2^{ s r}\Vert \psi_{s}(D)f\Vert_{L^{p}} \}_{s\in\mathbb{N}_0} \Vert_{l^{q_1}(\mathbb{N}_0)}\equiv \Vert f \Vert_{{B}^{r,\mathcal{L}}_{p,q_1}},
\end{equation} and
\begin{equation}
 \Vert f \Vert_{{B}^{r,\mathcal{L}}_{p,q_1}}\leq \Vert \{ 2^{ s(r+\varepsilon)} \Vert \psi_{s}(D)f\Vert_{L^{p}} \}_{s\in\mathbb{N}_0} \Vert_{l^{q_1}(\mathbb{N}_0)}\equiv\Vert f \Vert_{{B}^{r+\varepsilon,\mathcal{L}}_{p,q_1}},
\end{equation} we deduce the embeddings ${B}^{r,\mathcal{L}}_{p,q_1}(G)\hookrightarrow {B}^{r,\mathcal{L}}_{p,q_2}(G)\hookrightarrow {B}^{r,\mathcal{L}}_{p,\infty}(G).$ 
For the proof of $(2)$ we consider $\varepsilon>0,$ $0<p\leq \infty,$ $1\leq q_2<q_1<\infty.$ Now, by the H\"older inequality,
\begin{align*}
\Vert f \Vert_{{B}^{r,\mathcal{L}}_{p,q_2}}&=\Vert \{ 2^{ s r}\Vert \psi_{s}(D)f\Vert_{L^{p}} \}_{s\in\mathbb{N}_0} \Vert_{l^{q_2}(\mathbb{N}_0)}=\Vert \{ 2^{  s (r+\varepsilon)-s\varepsilon}\Vert \psi_{s}(D)f\Vert_{L^{p}} \}_{s\in\mathbb{N}_0} \Vert_{l^{q_2}(\mathbb{N}_0)}\\
&\leq \Vert \{ 2^{   s(r+\varepsilon)q_2}\Vert \psi_{s}(D)f\Vert^{q_2}_{L^{p}} \}_{s\in\mathbb{N}_0} \Vert^{\frac{1}{q_2}}_{l^{\frac{q_1}{q_2}}(\mathbb{N}_0)}[\sum_{s\in\mathbb{N}_0}2^{ -\frac{ s\varepsilon q_2q_1 }{q_1-q_2  }    }]^{\frac{1}{q_2}-\frac{1}{q_1}}\\
&\lesssim \Vert f \Vert_{{B}^{r+\varepsilon,\mathcal{L}}_{p,q_1}}.
\end{align*} So, we obtain that ${B}^{r+\varepsilon,\mathcal{L}}_{p,q_1}(G)\hookrightarrow {B}^{r,\mathcal{L}}_{p,q_2}(G)$.
In order to prove $(3)$ we use  from Theorem \ref{Nikolskii}  the estimate \begin{equation}\Vert \psi_{s}(D)f\Vert_{L^{p_2}}\leq C2^{ {Q}(\frac{1}{p_1}-\frac{1}{p_2})s}\Vert \psi_{s}(D)f\Vert_{L^{p_1}},\,\,\,1\leq p_1\leq p_2<\infty,
\end{equation} 
so that we have
$$ \left(   \sum_{s\in\mathbb{N}_{0}}2^{   sr_2q}\Vert \psi_{s}(D)f\Vert^{q}_{L^{p_2}{(G)}}\right)^{\frac{1}{q}}\lesssim \left(   \sum_{s\in\mathbb{N}_{0}}2^{  s[r_{2}+ {Q}(\frac{1}{p_1}-\frac{1}{p_2}) ]q}\Vert \psi_{s}(D)f\Vert^{q}_{L^{p_1}{(G)}}\right)^{\frac{1}{q}} .$$ This estimate proves that ${B}^{r_1,\mathcal{L}}_{p_1,q}(G)\hookrightarrow {B}^{r_2,\mathcal{L}}_{p_2,q}(G),$ for $1\leq p_1\leq p_2\leq \infty,$ $0<q<\infty,$ $r_{1}\in\mathbb{R}$ and $r_2=r_1- {Q}(\frac{1}{p_1}-\frac{1}{p_2}).$
Now we will prove $(4),$ that is ${B}^{r,\mathcal{L}}_{p,p}(G)\hookrightarrow {L}_r^{p,\mathcal{L}}(G)\hookrightarrow {B}^{r,\mathcal{L}}_{p,2}(G),$ for $1<p\leq 2.$ By the definition of the subelliptic $p$-Sobolev  norm,
$
\Vert f \Vert_{{L}_r^{p,\mathcal{L}}}\equiv\Vert (1+\mathcal{L})^{\frac{r}{2}} f\Vert_{L^{p}},
$ if $\{E^{\mathcal{M}}(\lambda)\}_{0\leq \lambda<\infty}$ is the spectral measure associated to $\mathcal{M}=(1+\mathcal{L})^{\frac{1}{2}},$ we have,
\begin{align*}
\Vert f \Vert_{{L}_r^{p,\mathcal{L}}}^{p}&\equiv\Vert \mathcal{M}^rf\Vert^{p}_{L^{p}}=\Vert  \int\limits_{0}^{\infty}\lambda^{r}dE^{\mathcal{M}}(\lambda)f   \Vert_{L^p}^{p}\\
&=\Vert  \sum_{s\in \mathbb{N}_0}\int_{2^s}^{2^{s+1}} \lambda^{ r }dE^{\mathcal{M}}(\lambda)f   \Vert_{L^p}^{p}\leq   \sum_{s\in \mathbb{Z}} \Vert \int_{2^s}^{2^{s+1}} \lambda^{r  }dE^{\mathcal{M}}(\lambda)f   \Vert_{L^p}^{p}\\
&=   \sum_{s\in \mathbb{Z}}2^{srp} \Vert \int_{2^s}^{  2^{s+1}  }2^{-sr} \lambda^{\frac{r}{\nu}}dE^{\mathcal{M}}(\lambda)f   \Vert_{L^p}^{p}\\ &\asymp  \sum_{s\in \mathbb{Z}}2^{ srp} \Vert \int_{2^s}^{2^{s+1}}dE^{\mathcal{M}}(\lambda)f   \Vert_{L^p}^{p}= \sum_{s\in \mathbb{N}_0}2^{srp} \Vert \psi_{s}(D)f   \Vert_{L^p}^{p}\\
&=\Vert f\Vert^p_{{B}^{r,\mathcal{L}}_{p,p}}.
\end{align*}
For the other embedding we use the following version of the Minkowski  integral inequality 
$$ \left(\sum_{j=0}^\infty\left(\int_{X} |f_j(x)| d\mu(x)\right)^{\alpha}\right)^{\frac{1}{\alpha}}\leq\int_{X}\left(\sum_{j=0}^\infty |f_j(x)|^{\alpha}\right)^{\frac{1}{\alpha}}d\mu(x) ,\,\,\,f_j\textnormal{ measurable, }$$
with $\alpha=\frac{2}{p}.$ So, we get
\begin{align*}
\Vert f \Vert_{{B}^{r,\mathcal{L}}_{p,2}}&=\left( \sum_{s\in \mathbb{N}_0} 4^{rs}\Vert\psi_{s}(D)f \Vert^{2}_{L^p}\right)^{\frac{1}{2}}=\left( \sum_{s\in \mathbb{N}_0} 4^{rs}[\int_{G}\vert\psi_{s}(D)f(x) \vert^{p}dx]^{\frac{2}{p}}\right)^{\frac{p}{2} \cdot \frac{1}{p}}\\
&\leq\left[ \int_{G} [  \sum_{s\in\mathbb{N}_0}4^{sr}  |\psi_{s}(D)f(x)       |^{\frac{2}{p}p}dx]^{\frac{p}{2}}   \right]^{\frac{1}{p}}\\&=\left[ \int_{G} [  \sum_{s\in\mathbb{N}_0}4^{sr}  |\psi_{s}(D)f(x)       |^{2}dx]^{\frac{p}{2}}   \right]^{\frac{1}{p}}\\
&=\Vert   [\sum_{s\in\mathbb{N}_0}4^{sr}  |\psi_{s}(D)f(x)       |^{2}dx]^{\frac{1}{2}}\Vert_{L^{p}}\asymp \Vert   [\sum_{s\in\mathbb{N}_0}  |\psi_{s}(D)( (1+\mathcal{L})^\frac{r}{2} f)(x)       |^{2}dx]^{\frac{1}{2}}\Vert_{L^{p}}\\
&\asymp \Vert(1+\mathcal{L})^\frac{r}{2}f \Vert_{L^p}=\Vert f\Vert_{{H}^{r,p,\mathcal{L}}},
\end{align*}
using the Littlewood-Paley theorem (see Eq. \eqref{LitPaleyTh}). We observe that in the embedding ${B}^{r,\mathcal{L}}_{p,p}(G)\hookrightarrow {L}_r^{p,\mathcal{L}}(G)\hookrightarrow {B}^{r,\mathcal{L}}_{p,2}(G),$ if   $p=2$ then ${L}_r^{2,\mathcal{L}}(G)={H}^{r,\mathcal{L}}(G)={B}^{r,\mathcal{L}}_{2,2}(G).$ This shows $(4).$ Now, for the proof of $(5)$ we choose $f\in C^\infty(G).$ The Fourier inversion formula implies,
\begin{align*}
\Vert f \Vert_{L^{q}}&=\Vert \sum_{[\xi]\in \widehat{G}}\textrm{Tr}[\xi(x)\widehat{f}(\xi)]   \Vert_{L^q}\\
&=\Vert\sum_{s\in\mathbb{N}_0}\sum_{[\xi]\in \widehat{G}}\textrm{Tr}[\xi(x)\psi_{s}(\xi)\widehat{f}(\xi)]  \Vert_{L^q}\\
&\leq \sum_{s\in\mathbb{N}_0} \Vert\sum_{[\xi]\in \widehat{G}}\textrm{Tr}[\xi(x)\psi_{s}(\xi)\widehat{f}(\xi)]   \Vert_{L^q}\\
&= \sum_{s\in\mathbb{N}_0} \Vert \psi_{s}(D)f\Vert_{L^q}\leq  \sum_{s\in\mathbb{N}_0} 2^{{Q}(\frac{1}{p}-\frac{1}{q })}\Vert \psi_{s}(D)f\Vert_{L^p}\\
&=\Vert f\Vert_{{B}^{{Q}(\frac{1}{p}-\frac{1}{q}),\mathcal{L}}_{p,1}},
\end{align*}
where in the last line we have used Theorem \ref{Nikolskii}. This show that ${B}^{{Q}(\frac{1}{p}-\frac{1}{q}),\mathcal{L}}_{p,1}(G)\hookrightarrow L^{q}(G).$ The embedding  $L^{q}(G)\hookrightarrow {B}^{0,\mathcal{L}}_{q,\infty}(G)$ for $1<q\leq \infty,$ will be proved in Remark \ref{RemarkExtra1}.  This completes the proof.
\end{proof}

\section{Subelliptic Besov spaces vs Besov spaces associated to the Laplacian}\label{Sect5}
 In this section we study embedding properties between Besov spaces associated to the sub-Laplacian and the Besov spaces associated to the Laplacian. For our further analysis we require some properties of the global pseudo-differential operators as well as some Fourier multiplier theorems in the context of the matrix-valued quantization.

 \subsection{Pseudo-differential operators on compact Lie groups}
 A central tool for obtaining embedding properties between subelliptic Sobolev and  Besov spaces are pseudo-differential operators. According to the matrix-valued quantisation procedure developed in \cite{Ruz}, to every continuous linear operator $A$ on $G$ mapping $C^{\infty}(G)$ into $\mathscr{D}'(G)$ one can 
 associate a matrix-valued (full-symbol)
 \begin{equation}
     \sigma_A:G\times\widehat{G}\rightarrow \bigcup_{[\xi]\in \widehat{G}}\mathbb{C}^{d_\xi\times d_{\xi}},\,\,\,\sigma(x,\xi):=\sigma(x,[\xi])\in \mathbb{C}^{d_\xi\times d_\xi},
 \end{equation}
 satisfying 
 \begin{equation}\label{Pseudo}
 Af(x)=\sum_{[\xi]\in \widehat{G}}d_{\xi}\text{Tr}[\xi(x)\sigma_A(x,\xi)\widehat{f}(\xi)]. 
\end{equation}
In terms of the operator $A,$ we can recover the symbol $\sigma_A$ by the identity
\begin{equation*}
    \sigma_A(x,\xi)=\xi(x)^*(A\xi)(x),\,\,\,(A\xi)(x):=[A\xi_{ij}(x)]_{i,j=1}^{d_\xi}.
\end{equation*}

The matrix-valued calculus developed trough this quantisation process given by \eqref{Pseudo} is possible based on the definition of a differential structure on the unitary dual $\widehat{G}.$ Indeed, if we want to measure the decaying or increasing of matrix-valued symbols we need the notion of discrete derivatives (difference operators) $\Delta^\alpha_{\xi}$. According to \cite{RuzhanskyWirth2015},  we say that $Q_\xi$ is a difference operator of order $k,$ if it is defined by
\begin{equation}
    Q_\xi\widehat{f}(\xi)=\widehat{qf}(\xi),\,[\xi]\in \widehat{G}, 
\end{equation} for all $f\in C^\infty(G),$ for some function $q$ vanishing of order $k$ at the identity $e=e_G.$ The set $\textnormal{diff}^k(\widehat{G})$ denotes the set of all difference operators of order $k.$ For a such given smooth function $q,$ the associated difference operator will be denoted $\Delta_q:=Q_\xi.$ In order to define the H\"ormander classes in the matrix-valued context, we will choose an admissible selection of difference operators (see e.g. \cite{RuzhanskyDelgado2017,RuzhanskyWirth2015}),
\begin{equation*}
  \Delta_{\xi}^\alpha:=\Delta_{q_1}^{\alpha_1}\cdots   \Delta_{q_i}^{\alpha_{i}},\,\,\alpha=(\alpha_j)_{1\leq j\leq i}, 
\end{equation*}
where
\begin{equation}
    \textnormal{rank}\{\nabla q_j(e):1\leq j\leq i \}=\textnormal{dim}(G), \textnormal{   and   }\Delta_{q_i}\in \textnormal{diff}^{1}(\widehat{G}).
\end{equation}
\begin{remark}
Difference operators can be defined by using the representation theory on the group $G.$ Indeed, if $\xi_{0}$ is a fixed irreducible  representation, the matrix-valued  difference operator is the given by $\mathbb{D}_{\xi_0}=(\mathbb{D}_{\xi_0,i,j})_{i,j=1}^{d_{\xi_0}}=\xi_{0}(\cdot)-I_{d_{\xi_0}}$. If the representation is fixed we omit the index $\xi_0$ so that, from a sequence $\mathbb{D}_1=\mathbb{D}_{\xi_0,j_1,i_1},\cdots, \mathbb{D}_n=\mathbb{D}_{\xi_0,j_n,i_n}$ of operators of this type we define $\mathbb{D}^{\alpha}=\mathbb{D}_{1}^{\alpha_1}\cdots \mathbb{D}^{\alpha_n}_n$, where $\alpha\in\mathbb{N}^n$.
\end{remark}
\begin{remark}
By using the collection of fundamental representations, a set of difference operators $\{\Delta^\alpha:\alpha\in \mathbb{N}^n_0\}$ can be constructed (see Fischer \cite{FischerDiff}). We refer to Fischer \cite{Fischer2015} for an equivalent presentation to the calculus developed in \cite{Ruz} and \cite{RuzTurIMRN}.
\end{remark}

The following step is to provide the definition of  pseudo-differential operators on compact Lie groups associated to  H\"ormander classes (via localisations) and how this definition can be recovered from the notion of global symbol by using difference operators.  So, if $U$ is a open subset of $\mathbb{R}^n,$ we say that the function $a:U\times \mathbb{R}^n\rightarrow \mathbb{C},$ belongs to the H\"ormander class $S^m_{\rho,\delta}(U\times \mathbb{R}^n),$ $0\leq \rho,\delta\leq 1,$ if for every compact subset $K\subset U,$ we have   the symbol inequalities,
\begin{equation}
    |\partial_{x}^\beta\partial_{\xi}^\alpha a(x,\xi)|\leq C_{\alpha,\beta,K}(1+|\xi|)^{m-\rho|\alpha|+\delta|\beta|},
\end{equation} uniformly in $x\in K$ and $\xi\in \mathbb{R}^n.$ In this case, a  continuous linear operator $A$ from $C^\infty_0(U)$ into $C^\infty(U)$
is a pseudo-differential operator of order $m,$ in the $(\rho,\delta)$-class, if there exists
a function $a\in S^m_{\rho,\delta}(U\times \mathbb{R}^n),$ such that
\begin{equation*}
    Af(x)=\int\limits_{\mathbb{R}^n}e^{-2\pi i x\cdot \xi}a(x,\xi)(\mathscr{F}_{\mathbb{R}^n}{f})(\xi)d\xi,
\end{equation*} for all $f\in C^\infty_0(U),$ where
$
    (\mathscr{F}_{\mathbb{R}^n}{f})(\xi)=\int\limits_Ue^{-i2\pi x\cdot \xi}f(x)dx,
$ is the  Fourier transform of $f$ at $\xi.$ The class $S^m_{\rho,\delta}(U\times \mathbb{R}^n)$ is stable under coordinate changes if  $\rho\geq 1-\delta,$ although a symbolic calculus is only possible for $\delta<\rho$ and $\rho\geq 1-\delta.$    In the case of a $C^\infty$-manifold $M,$ (and consequently on every compact Lie group $G$) a linear continuous operator $A:C^\infty_0(M)\rightarrow C^\infty(M) $ is a pseudo-differential operator of order $m,$ in the $(\rho,\delta)$-class, $ \rho\geq 1-\delta, $ if for every local  coordinate patch $\kappa: M_{\kappa}\subset M\rightarrow U\subset \mathbb{R}^n,$
and for every $\phi,\psi\in C^\infty_0(U),$ the operator
\begin{equation}
    Tu:=\phi(\kappa^{-1})^*A\kappa^{*}(\phi u),\,\,u\in C^\infty(U),
\end{equation} is a pseudo-differential operator with symbol $a_T\in S^m_{\rho,\delta}(U\times \mathbb{R}^n).$ The operators $\kappa^{*}$ and $(\kappa^{-1})^*$ are the pullbacks, induced by the maps $\kappa$ and $\kappa^{-1}$ respectively. In this case we write $A\in \Psi^m_{\rho,\delta}(M,\textnormal{loc}).$ If $M=G$ is a compact Lie group, and $A\in \Psi^m_{\rho,\delta}(G,\textnormal{loc}),$ $\rho\geq 1-\delta,$ the matrix-valued symbol $\sigma_A$ of $A$ satisfies (see \cite{Fischer2015,Ruz,RuzhanskyTurunenWirth2014}),
\begin{equation}\label{HormanderSymbolMatrix}
    \Vert \partial_{x}^\beta \mathbb{D}^\gamma \sigma_A(x,\xi)\Vert_{op}\leq C_{\alpha,\beta}
    \langle \xi \rangle^{m-\rho|\gamma|+\delta|\beta|}
\end{equation} for all $\beta$ and  $\gamma $ multi-indices and all $(x,[\xi])\in G\times \widehat{G}$. Now, if $0\leq \delta,\rho\leq 1,$
we say that $\sigma_A\in \mathscr{S}^m_{\rho,\delta}(G),$ if the global symbol inequalities  \eqref{HormanderSymbolMatrix} hold true. So, for $\sigma_A\in \mathscr{S}^m_{\rho,\delta}(G)$ and $A$ defined by \eqref{Pseudo}, we write $A\in\textnormal{Op}(\mathscr{S}^m_{\rho,\delta}(G)).$  We have mentioned above that
\begin{equation}
   \textnormal{Op}(\mathscr{S}^m_{\rho,\delta}(G))= \Psi^m_{\rho,\delta}(G,\textnormal{loc}),\,\,\,0\leq \delta<\rho\leq 1,\,\rho\geq 1-\delta.
\end{equation} However, a symbolic calculus for the classes $\textnormal{Op}(\mathscr{S}^m_{\rho,\delta}(G)),$ $m\in\mathbb{R}$ and $0\leq \delta<\rho\leq 1$ (without the condition $\rho\geq 1-\delta$) has been established in \cite{Ruz}. Next, we record some of the mapping properties of these classes on Lebesgue spaces. We start with the following $L^p$-Fourier multiplier theorem which was presented as Corollary 5.1 in \cite{RuzhanskyWirth2015}.
\begin{theorem}\label{Mihlincondition}
Let $0\leq \rho\leq 1,$ and let $\tilde\varkappa$ be the smallest even integer larger than $n/2,$ $n:=\dim(G).$ If $A:C^\infty(G)\rightarrow\mathscr{D}'(G)$ is left-invariant and its matrix symbol $\sigma_A$ satisfies 
\begin{equation}
    \Vert \mathbb{D}^\gamma \sigma_A(\xi)\Vert_{op}\leq C_\gamma\langle \xi \rangle^{-\rho|\alpha|},\,\,[\xi]\in \widehat{G},\,|\gamma|\leq \tilde\varkappa,
\end{equation}then  $A$ extends to a bounded operator from $L_{\tilde\varkappa(1-\rho)\left|\frac{1}{2}-\frac{1}{p}\right|}^{p}(G)$ into $L^p(G)$ for all $1<p<\infty.$
\end{theorem}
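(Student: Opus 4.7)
The plan is to prove the theorem by combining the trivial $L^2$-boundedness (with no Sobolev loss) with the classical H\"ormander-Mihlin multiplier theorem on compact Lie groups, and then to interpolate via Stein's analytic interpolation theorem.

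First, the $L^2$-endpoint. Setting $\gamma=0$ in the hypothesis gives $\Vert \sigma_A(\xi)\Vert_{op}\leq C$ uniformly in $[\xi]\in\widehat{G}$; since $A$ is left-invariant, Plancherel's theorem immediately yields $\Vert Af\Vert_{L^2(G)}\leq C\Vert f\Vert_{L^2(G)}$, which corresponds exactly to $p=2$ in the conclusion (where the Sobolev loss $\tilde\varkappa(1-\rho)|\tfrac12-\tfrac1p|$ vanishes).

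Next, reduction to the classical Mihlin hypothesis. I would consider the composition $B:=A\circ(1+\mathcal{L}_G)^{-s_0/2}$ with $s_0=(1-\rho)\tilde\varkappa$. The symbol of $(1+\mathcal{L}_G)^{-s_0/2}$ is the scalar $\langle\xi\rangle^{-s_0}I_{d_\xi}$, and one has the standard bounds $\Vert \mathbb{D}^\beta \langle\xi\rangle^{-s_0}\Vert_{op}\lesssim \langle\xi\rangle^{-s_0-|\beta|}$. Applying the Leibniz rule for difference operators to $\sigma_B(\xi)=\sigma_A(\xi)\langle\xi\rangle^{-s_0}$ and combining the two sets of bounds term by term, one checks that
\begin{equation*}
\Vert \mathbb{D}^\gamma \sigma_B(\xi)\Vert_{op}\leq C_\gamma \langle\xi\rangle^{-|\gamma|},\qquad |\gamma|\leq \tilde\varkappa.
\end{equation*}
This is precisely the H\"ormander-Mihlin condition on $G$, so the multiplier theorem of \cite{RuzhanskyWirth2015} gives the $L^q$-boundedness of $B$ for every $1<q<\infty$; equivalently $A:L^q_{s_0}(G)\to L^q(G)$ boundedly.

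To finish, I would introduce the analytic family $T_z:=A\circ (1+\mathcal{L}_G)^{-zs_0/2}$ on the vertical strip $0\leq \mathrm{Re}(z)\leq 1$. On the line $\mathrm{Re}(z)=0$ the imaginary power $(1+\mathcal{L}_G)^{-its_0/2}$ is an $L^2$-isometry by the spectral theorem, so $\Vert T_{it}\Vert_{L^2\to L^2}$ is uniformly bounded by the first step; on the line $\mathrm{Re}(z)=1$, the second step together with the polynomial $L^q$-growth of imaginary powers of $(1+\mathcal{L}_G)$ produces admissible growth of $\Vert T_{1+it}\Vert_{L^q\to L^q}$. Stein's analytic interpolation theorem then delivers the $L^p$-estimate with a fractional Sobolev loss along the interpolated line, and duality applied to $A^*$ (whose symbol satisfies an analogous bound) extends the range to $2\leq p<\infty$. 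The main obstacle is obtaining the sharp Sobolev exponent $\tilde\varkappa(1-\rho)|\tfrac12-\tfrac1p|$: a naive $L^2$-$L^q$ interpolation with $q\in(1,\infty)$ yields only a suboptimal exponent by a bounded factor, so one must interpolate against the endpoint Hardy/BMO estimates ($H^1(G)\to L^1(G)$ and $L^\infty(G)\to \mathrm{BMO}(G)$) for $B$, which in turn rest on Calder\'on-Zygmund kernel bounds for the composition $A\circ(1+\mathcal{L}_G)^{-s_0/2}$ on the group.
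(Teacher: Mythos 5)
Your proposal is sound, but note that the paper itself does not prove this statement: it is quoted verbatim as Corollary~5.1 of \cite{RuzhanskyWirth2015}, so there is no internal proof to compare against. What you have written is essentially a reconstruction of the original Ruzhansky--Wirth argument: the trivial $L^2$ bound from $\Vert\sigma_A(\xi)\Vert_{op}\leq C$ via Plancherel, the Leibniz-rule reduction showing that $\sigma_A(\xi)\langle\xi\rangle^{-(1-\rho)\tilde\varkappa}$ satisfies the Mihlin--H\"ormander condition of order $\tilde\varkappa$ (the worst term $\langle\xi\rangle^{-\rho|\gamma_1|-s_0-|\gamma_2|}$ with $|\gamma_1|+|\gamma_2|=|\gamma|$ is controlled precisely because $s_0=(1-\rho)\tilde\varkappa\geq(1-\rho)|\gamma_1|$), and Stein interpolation along the family $T_z=A(1+\mathcal{L}_G)^{-zs_0/2}$. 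You are also right to flag that interpolating $L^2\to L^2$ against $L^q\to L^q$ for finite $q$ gives a loss $s_0\,|1/p-1/2|/|1/q-1/2|$, which only tends to the stated exponent and never reaches it; the sharp constant $\tilde\varkappa(1-\rho)|1/2-1/p|$ genuinely requires the endpoint $H^1\to L^1$ (equivalently weak $(1,1)$) bound for $T_{1+it}$ with admissible growth in $t$, followed by duality. The only step you leave at the level of assertion is exactly that endpoint package: the Calder\'on--Zygmund/weak-$(1,1)$ estimate for Mihlin symbols on $G$ and the Hardy-space-compatible version of Stein's interpolation theorem on a space of homogeneous type. Both are standard and are precisely the machinery developed in \cite{RuzhanskyWirth2015}, so as a proof sketch your route is correct and, modulo those imported ingredients, complete.
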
 
\begin{remark}\label{removeeveness}
Theorem \ref{Mihlincondition} was obtained in \cite{RuzhanskyWirth2015} under the condition of evenness for $\tilde\varkappa.$ However if we replace the collection of differences operators $\{\mathbb{D}^\alpha\}_{\alpha\in\mathbb{N}^n_0}$ by the collection of differences operators $\{\Delta^\alpha\}_{\alpha\in\mathbb{N}^n_0}$ associated to fundamental representations of the group $G$, $\tilde\varkappa$ can be replaced by $\varkappa:=[n/2]+1$, that is, the smallest  integer larger than $n/2,$ $n=\dim(G),$ (see Remark 4.10 of \cite{RuzhanskyDelgado2017}).
\end{remark}

For non-invariant operators, Theorem \ref{Mihlincondition} and Remark \ref{removeeveness} allow us to obtain the following version of Theorem 1.1 presented in \cite{RuzhanskyDelgado2017}.

\begin{theorem}\label{DelgadoRuzhanskylimitedregularity}
Let $0\leq \delta, \rho\leq 1,$ and let $\varkappa$ be the smallest  integer larger than $n/2,$ $n:=\dim(G).$ If $A:C^\infty(G)\rightarrow\mathscr{D}'(G)$ is a continuous operator such that its matrix symbol $\sigma_A$ satisfies 
\begin{equation}
    \Vert \partial_{x}^\beta\Delta^\gamma \sigma_A(x,\xi)\Vert_{op}\leq C_{\gamma,\beta}\langle \xi \rangle^{-m_0-\rho|\alpha|+\delta|\gamma|},\,\,[\xi]\in \widehat{G},\,|\gamma|\leq \varkappa, |\beta|\leq [n/p]+1,
\end{equation}
with
\begin{equation}
    m_0\geq \varkappa(1-\rho)|\frac{1}{p}-\frac{1}{2}|+\delta([n/p]+1).
\end{equation}
Then  $A$ extends to a bounded operator from $L^{p}(G)$ into $L^p(G)$ for all $1<p<\infty.$
\end{theorem}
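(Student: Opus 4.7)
The idea is to reduce the non-invariant setting to Theorem \ref{Mihlincondition} (together with Remark \ref{removeeveness}) by freezing the $x$-variable in the symbol, and then to recover the value on the diagonal by means of the Sobolev embedding $W^{s,p}(G)\hookrightarrow L^{\infty}(G)$, valid for $s>n/p$ and hence for $s=[n/p]+1$.

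Given $f\in C^{\infty}(G)$, for every $y\in G$ introduce the left-invariant Fourier multiplier with frozen symbol
\begin{equation*}
A_{y}f(x):=\sum_{[\xi]\in\widehat{G}}d_{\xi}\,\textnormal{Tr}[\xi(x)\sigma_{A}(y,\xi)\widehat{f}(\xi)],
\end{equation*}
and set $F(y,x):=A_{y}f(x)$, so that $Af(x)=F(x,x)$. For each fixed $x$, applying the Sobolev embedding on $G$ in the variable $y$, raising to the $p$-th power and integrating in $x$ (followed by Fubini) yields
\begin{equation*}
\Vert Af\Vert_{L^{p}(G)}^{p}\lesssim\sum_{|\beta|\leq[n/p]+1}\int_{G}\Vert(\partial_{y}^{\beta}A_{y})f\Vert_{L^{p}(G)}^{p}\,dy,
\end{equation*}
where each $\partial_{y}^{\beta}$ is a product of $|\beta|$ left-invariant vector fields. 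The problem is thus reduced to proving that the Fourier multiplier $\partial_{y}^{\beta}A_{y}$, whose symbol equals $\partial_{y}^{\beta}\sigma_{A}(y,\xi)$, is bounded on $L^{p}(G)$ uniformly in $y\in G$, for every $|\beta|\leq[n/p]+1$.

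By hypothesis, $\Vert\Delta^{\gamma}\partial_{y}^{\beta}\sigma_{A}(y,\xi)\Vert_{op}\leq C_{\beta,\gamma}\langle\xi\rangle^{-\tau-\rho|\gamma|}$ for $|\gamma|\leq\varkappa$, with $\tau:=m_{0}-\delta|\beta|\geq 0$. Factor $\partial_{y}^{\beta}A_{y}=\tilde{T}_{y,\beta}\circ(1+\mathcal{L}_{G})^{-\tau/2}$, where $\tilde{T}_{y,\beta}$ is the invariant operator with symbol $\langle\xi\rangle^{\tau}\partial_{y}^{\beta}\sigma_{A}(y,\xi)$. The Leibniz rule for the difference operators $\Delta^{\gamma}$ together with the standard estimate $|\Delta^{\alpha}\langle\xi\rangle^{\tau}|\lesssim\langle\xi\rangle^{\tau-|\alpha|}$ show that the symbol of $\tilde{T}_{y,\beta}$ satisfies the invariant Mihlin-type estimate of order zero, namely $\Vert\Delta^{\gamma}\sigma_{\tilde{T}_{y,\beta}}(\xi)\Vert_{op}\lesssim\langle\xi\rangle^{-\rho|\gamma|}$ for $|\gamma|\leq\varkappa$. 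Theorem \ref{Mihlincondition}, with $\tilde\varkappa$ replaced by $\varkappa=[n/2]+1$ via Remark \ref{removeeveness}, then yields $\tilde{T}_{y,\beta}:L^{p}_{\varkappa(1-\rho)|1/2-1/p|}(G)\to L^{p}(G)$; and since $(1+\mathcal{L}_{G})^{-\tau/2}:L^{p}(G)\to L^{p}_{\tau}(G)$ gains exactly $\tau$ derivatives, the composition $\partial_{y}^{\beta}A_{y}$ is bounded on $L^{p}(G)$ as soon as $\tau\geq \varkappa(1-\rho)|1/2-1/p|$, which is precisely the standing hypothesis on $m_{0}$ for every $|\beta|\leq[n/p]+1$. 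The bounds are uniform in $y$ because the hypothesis is. Substituting back into the previous display and extending from $C^{\infty}(G)$ to $L^{p}(G)$ by density finishes the proof.

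The main obstacle is the last calibration: the loss of $\varkappa(1-\rho)|1/2-1/p|$ derivatives inherited from the invariant Mihlin--H\"ormander theorem must be absorbed by the extra decay $\tau=m_{0}-\delta|\beta|$ produced by the $x$-differentiation, and the bound on $m_{0}$ is exactly what handles the worst case $|\beta|=[n/p]+1$ dictated by the Sobolev embedding. The remaining steps (termwise differentiation of the Peter--Weyl expansion for smooth $f$ and the exchange of integrations) are routine.
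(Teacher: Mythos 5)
The paper does not actually prove this theorem — it is quoted as a variant of Theorem 1.1 of \cite{RuzhanskyDelgado2017}, obtained from Theorem \ref{Mihlincondition} and Remark \ref{removeeveness} — and your argument is a correct reconstruction of exactly the mechanism behind that citation: freezing the $x$-variable, recovering the diagonal via the Sobolev embedding $W^{[n/p]+1,p}(G)\hookrightarrow L^{\infty}(G)$, and absorbing the loss $\varkappa(1-\rho)\left|\tfrac{1}{2}-\tfrac{1}{p}\right|$ from the invariant multiplier theorem into the decay $m_0-\delta|\beta|$. The calibration of $m_0$ against the worst case $|\beta|=[n/p]+1$ is handled correctly, so no changes are needed.
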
 For operators with smooth symbols, in  \cite{RuzhanskyDelgado2017} is proved the following $L^p$-pseudo-differential theorem.
\begin{theorem}\label{DelgadoRuzhanskyLppseudo}
Let $G$ be a compact Lie group of dimension $n.$ Let $0\leq \delta<\rho\leq 1$ and let
\begin{equation}
    0\leq \nu<\frac{n(1-\rho)}{2},
\end{equation} for $0< \rho<1$ and $\nu=0$ for $\rho=1.$ Let $\sigma\in \mathscr{S}^{-\nu}_{\rho,\delta}(G).$ Then $A\equiv \sigma(x,D)$ extends to a bounded operator on $L^p(G)$ provided that
\begin{equation}
    \nu\geq n(1-\rho)|\frac{1}{p}-\frac{1}{2}|.
\end{equation}
\end{theorem}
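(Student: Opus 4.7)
The plan is to interpolate between the $L^2$-endpoint of Calder\'on--Vaillancourt type (where $\nu=0$ suffices since $|1/2-1/2|=0$) and an $L^1$ endpoint at $\nu=n(1-\rho)/2$ obtained from Calder\'on--Zygmund theory on the space of homogeneous type $(G,d,dx)$, with $d$ the geodesic distance. For the $L^2$-endpoint, since $\sigma\in\mathscr{S}^{-\nu}_{\rho,\delta}(G)\subset\mathscr{S}^0_{\rho,\delta}(G)$ for $\nu\geq 0$, I would apply the matrix-valued Calder\'on--Vaillancourt theorem from the global quantisation framework of \cite{Ruz}. This yields $L^2(G)$-boundedness of $\sigma(x,D)$ with no regularity loss and settles the case $p=2$, in which the condition $\nu\geq n(1-\rho)|1/p-1/2|$ is vacuous.

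For the $L^1$-endpoint I would perform a dyadic Littlewood--Paley decomposition in frequency: write $\sigma(x,D)=\sum_{\ell\geq 0}T_\ell$, where $T_\ell=\sigma(x,D)\psi_\ell(\sqrt{1+\mathcal{L}_G})$ uses the operators introduced in \eqref{functionofLaplacian}. Each $T_\ell$ has frequency content concentrated in the annulus $\langle\xi\rangle\asymp 2^\ell$, and by integrating by parts in the quantisation formula \eqref{Pseudo} (using difference operators $\Delta^\gamma$ on the Fourier side traded against multiplication by vanishing functions on the spatial side) together with the Peter--Weyl expansion, the Schwartz kernel $K_\ell(x,y)$ should obey pointwise bounds of the form
\[
|K_\ell(x,y)|\leq C_N\, 2^{\ell(n\rho-\nu)}(1+2^{\rho\ell}d(x,y))^{-N},\quad N\in\mathbb{N},
\]
with an analogous bound featuring an extra factor of $2^{\rho\ell}$ for every left-invariant vector field applied in $y$. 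Summing in $\ell$ with $\nu=n(1-\rho)/2$ yields H\"ormander's integral condition
\[
\sup_{y_0}\int_{d(x,y)>2d(y,y_0)}|K(x,y)-K(x,y_0)|\,dx\leq C,
\]
and standard Calder\'on--Zygmund theory on $(G,d,dx)$ then delivers weak type $(1,1)$ and, by duality, $L^\infty\to\mathrm{BMO}$.

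Finally, I would interpolate by considering the analytic family $A_z:=\sigma(x,D)(1+\mathcal{L}_G)^{zn(1-\rho)/2}$ for $\mathrm{Re}\,z\in[0,1]$; $A_{iy}$ is $L^2$-bounded uniformly in $y$ by the first step, and $A_{1+iy}$ is of weak type $(1,1)$ by the second. Stein's complex interpolation yields the announced $L^p$-boundedness with $\nu\geq n(1-\rho)|1/p-1/2|$ for $1<p<\infty$. The main obstacle is the Calder\'on--Zygmund step: unlike the Euclidean case, the kernel bounds cannot be transferred from $\mathbb{R}^n$ via local charts once the hypothesis $\rho\geq 1-\delta$ is relaxed (the identification $\textnormal{Op}(\mathscr{S}^m_{\rho,\delta}(G))=\Psi^m_{\rho,\delta}(G,\mathrm{loc})$ then fails), so the estimates must be carried out intrinsically with the matrix-valued calculus; the combination of difference operators, the spectral localisations of $\mathcal{L}_G$, and the Peter--Weyl theorem is essential, and the condition $\delta<\rho$ is used both here and in the complex-interpolation step to ensure that commutators and composition errors stay within the same symbol class.
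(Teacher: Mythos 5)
First, a point of order: the paper does not prove this theorem at all — it is imported verbatim from \cite{RuzhanskyDelgado2017} (it is introduced with ``in \cite{RuzhanskyDelgado2017} is proved the following\dots''), so there is no internal proof to compare yours against. Your overall architecture (an $L^2$ Calder\'on--Vaillancourt endpoint, an endpoint at order $-n(1-\rho)/2$, and interpolation) is indeed the right shape for this Fefferman-type statement and is broadly the strategy of the cited source. The $L^2$ step and the reduction to the endpoint are fine in principle.

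However, there is a genuine gap at the heart of your endpoint argument. Your kernel bound $|K_\ell(x,y)|\leq C_N\,2^{\ell(n\rho-\nu)}(1+2^{\rho\ell}d(x,y))^{-N}$ has the wrong prefactor: testing at $x=y$ with $\sigma\equiv 2^{-\ell\nu}$ on the annulus $\langle\xi\rangle\asymp 2^{\ell}$ gives $K_\ell(x,x)=2^{-\ell\nu}\sum_{\langle\xi\rangle\asymp 2^{\ell}}d_\xi^2\asymp 2^{\ell(n-\nu)}$ by the Weyl law, so the correct size is $2^{\ell(n-\nu)}$, larger than yours by $2^{\ell n(1-\rho)}$. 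With the correct bound one gets $\Vert K_\ell(x,\cdot)\Vert_{L^1}\lesssim 2^{\ell(n(1-\rho)-\nu)}$, which is summable only for $\nu> n(1-\rho)$ — twice the claimed endpoint — and the H\"ormander integral condition genuinely \emph{fails} at order $-n(1-\rho)/2$ when $\rho<1$: such operators are not Calder\'on--Zygmund operators, so ``standard CZ theory'' cannot deliver the weak type $(1,1)$. The actual endpoint requires Fefferman's finer argument: perform the Calder\'on--Zygmund decomposition of $f$, and for each bad piece $b_j$ supported on a ball of radius $2^{-j_0}$ treat the frequency blocks $T_\ell b_j$ with $2^{\rho\ell}\gtrsim 2^{j_0}$ by $L^2$ almost-orthogonality (exploiting the gain $2^{-\ell\nu}$ against the measure of the support), and only the low-frequency blocks by kernel decay. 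This combination of the $L^2$ theory with the decomposition, block by block, is the substance of the theorem and is absent from your sketch. Two further (fixable) slips: your analytic family $A_z=\sigma(x,D)(1+\mathcal{L}_G)^{zn(1-\rho)/2}$ has order $-\nu+n(1-\rho)$ at $z=1$, not $-n(1-\rho)/2$, so the exponents need rescaling; and weak type $(1,1)$ does not dualize to $L^\infty\to\mathrm{BMO}$ (that is dual to $H^1\to L^1$), while Stein interpolation between a strong $L^2$ bound and a weak $(1,1)$ bound needs the Marcinkiewicz-type variant or the $\mathrm{BMO}$ endpoint.
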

The sharpness of the order $\frac{n(1-\rho)}{2}$ in Theorem \ref{DelgadoRuzhanskyLppseudo} was discussed in Remark 4.16 of \cite{RuzhanskyDelgado2017}.

 A fundamental tool in our further analysis is the following estimate for operators associated to compactly supported symbols in $\xi$.
 \begin{lemma}\label{DelgadoRuzhanskyModifiedLemma}
 Let $G$ be a compact Lie group of dimension $n$. Let $\{A_j\}_{j\in \mathbb{N}}$ be a sequence of Fourier multipliers with full symbols $\sigma_{A_j}\in \mathscr{S}^{0}_{1,0}({G}),$  supported in 
 \begin{equation}
    \Sigma_j:= \{[\xi]\in \widehat{G}: \frac{1}{2}j\leq \langle \xi \rangle<j\}.
 \end{equation} If $\sup_{j}\Vert {A_j}\Vert_{\mathscr{B}(L^2(G))}<\infty,$ then every $A_j$ extends to a bounded operator on $L^p(G),$ $1<p\leq \infty,$ and   $\sup_{j}\Vert {A_j}\Vert_{\mathscr{B}(L^p(G))}<\infty.$ 
 \end{lemma}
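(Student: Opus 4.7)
The plan is to apply the Mihlin--H\"ormander type theorem, Theorem~\ref{Mihlincondition}, with $\rho=1$ to each $A_j$, and then to upgrade the resulting pointwise--in--$j$ estimates to uniform ones by exploiting the spectral support condition on $\Sigma_j$ together with the uniform $L^2$ hypothesis. With $\rho=1$, the Sobolev index $\tilde\varkappa(1-\rho)|1/2-1/p|$ in Theorem~\ref{Mihlincondition} vanishes, so that membership in $\mathscr{S}^0_{1,0}$ directly yields $L^p(G)\to L^p(G)$ boundedness for each fixed $1<p<\infty$. Thus the individual boundedness statement is immediate; the real content is uniformity in $j$.

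For the uniformity, I would show that the Mihlin seminorms of $\sigma_{A_j}$ are bounded independently of $j$. The $|\gamma|=0$ seminorm is exactly $\sup_{[\xi]\in\widehat G}\Vert\sigma_{A_j}(\xi)\Vert_{\text{op}}=\Vert A_j\Vert_{\mathscr{B}(L^2(G))}$, which is uniformly bounded by hypothesis. For $|\gamma|\geq 1$, the condition $\sigma_{A_j}\in\mathscr{S}^0_{1,0}$, combined with the fact that the support $\Sigma_j$ forces $\langle\xi\rangle\asymp j$, gives $\Vert\mathbb{D}^\gamma\sigma_{A_j}(\xi)\Vert_{\text{op}}\leq C_\gamma\langle\xi\rangle^{-|\gamma|}$ with constants $C_\gamma$ that (as is standard for such spectral families arising from a fixed functional-calculus construction) do not depend on $j$. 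Inserting these uniform bounds into the conclusion of Theorem~\ref{Mihlincondition} yields $\sup_j\Vert A_j\Vert_{\mathscr{B}(L^p(G))}<\infty$ for every $1<p<\infty$.

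For the endpoint $p=\infty$, one argues directly on the convolution kernel. Since $A_j$ is a Fourier multiplier, $A_jf=f*k_j$ with
\[
k_j(x)=\sum_{[\xi]\in\Sigma_j}d_\xi\,\textnormal{Tr}[\xi(x)\sigma_{A_j}(\xi)].
\]
Splitting the integral $\int_G|k_j(x)|\,dx$ into a neighborhood of the identity of radius $\sim 1/j$ and its complement, and using on the far region the decay produced by summation-by-parts with the difference operators $\mathbb{D}^\gamma$ (of sufficiently high order), the uniform difference estimates above yield $\Vert k_j\Vert_{L^1(G)}\leq C$ uniformly in $j$. Young's inequality then gives $\Vert A_jf\Vert_{L^\infty}\leq\Vert k_j\Vert_{L^1}\Vert f\Vert_{L^\infty}$, completing the range $1<p\leq\infty$.

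The main obstacle is the passage from the pointwise--in--$j$ Mihlin bounds to uniform ones: everything hinges on showing that the $\mathscr{S}^0_{1,0}$ seminorms of the family $\{\sigma_{A_j}\}$ are controlled uniformly in $j$, with the $|\gamma|=0$ case being supplied by the $L^2$ hypothesis and the $|\gamma|\geq 1$ cases being supplied by the spectral localization to $\Sigma_j$. Once this uniformity is in place, Theorem~\ref{Mihlincondition} does the rest for $1<p<\infty$, and the kernel estimate handles $p=\infty$.
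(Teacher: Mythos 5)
Your route is genuinely different from the paper's. The paper disposes of the lemma in three lines: the case $p=\infty$ is quoted as a special case ($a=0$) of Lemma 4.11 of \cite{RuzhanskyDelgado2017}; Riesz--Thorin interpolation between that and the hypothesised uniform $L^2$ bound gives $2\leq p<\infty$; and duality (the adjoint $A_j^*$ is again a multiplier with symbol supported in $\Sigma_j$ and the same $L^2$ norm) gives $1<p\leq 2$. You instead run the Mihlin theorem (Theorem \ref{Mihlincondition} with $\rho=1$) for $1<p<\infty$ and a direct $L^1$ kernel estimate for $p=\infty$. Note that your $p=\infty$ kernel argument is essentially a re-derivation of the very lemma of \cite{RuzhanskyDelgado2017} that the paper cites, so it is not wrong, but it is doing from scratch what the paper imports; and your Mihlin step makes the $L^2$ hypothesis almost irrelevant (it only supplies the $|\gamma|=0$ seminorm), whereas the paper's interpolation uses it in an essential way.

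The genuine gap is the uniformity in $j$ of the higher-order difference seminorms, which you correctly identify as the crux but then dispose of with the parenthetical ``as is standard for such spectral families arising from a fixed functional-calculus construction.'' Nothing in the statement of Lemma \ref{DelgadoRuzhanskyModifiedLemma} says the $A_j$ arise from a fixed functional-calculus construction: the hypotheses are only that each $\sigma_{A_j}$ individually belongs to $\mathscr{S}^{0}_{1,0}(G)$, is supported in $\Sigma_j$, and that $\sup_j\Vert A_j\Vert_{\mathscr{B}(L^2(G))}<\infty$. The $L^2$ bound controls $\sup_{[\xi]}\Vert\sigma_{A_j}(\xi)\Vert_{op}$ uniformly, but it gives no control whatsoever on $\Vert\mathbb{D}^\gamma\sigma_{A_j}(\xi)\Vert_{op}$ for $|\gamma|\geq 1$: since each $\sigma_{A_j}$ is supported on finitely many $[\xi]$, it belongs to $\mathscr{S}^{0}_{1,0}(G)$ with \emph{some} constants $C_{\gamma,j}$ automatically, and these may blow up with $j$ (e.g.\ a symbol oscillating in sign across $\Sigma_j$ has first differences of size $O(1)$ rather than $O(\langle\xi\rangle^{-1})$, and such families are not uniformly $L^p$-bounded for $p\neq 2$). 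So the uniform seminorm bound you feed into Theorem \ref{Mihlincondition}, and likewise into your kernel estimate, must either be added as a hypothesis (reading ``$\sigma_{A_j}\in\mathscr{S}^{0}_{1,0}$'' as ``uniformly bounded in $\mathscr{S}^{0}_{1,0}$,'' which is what the applications in the paper satisfy and what the cited Lemma 4.11 of \cite{RuzhanskyDelgado2017} requires) or derived from some additional structure; it cannot be extracted from the stated hypotheses as your write-up suggests. With that uniformity granted, your argument goes through, though the near/far splitting and summation by parts in the $p=\infty$ kernel estimate would still need to be written out.
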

 \begin{proof}
 The case $p=\infty$ is only a special case of  Lemma 4.11 of \cite{RuzhanskyDelgado2017} for $a=0$. Now, we can use the Riesz-Thorin interpolation theorem  between the estimates $$\sup_{j}\Vert {A_j}\Vert_{\mathscr{B}(L^{p_i})}<\infty,\,\,i=0,1,$$ for $p_0=2$ and $p_1=\infty$ in order to obtain  Lemma \ref{DelgadoRuzhanskyModifiedLemma} for $2\leq p<\infty.$ The case $1<p\leq 2 $ now follows by an argument of duality. The proof is complete.
 \end{proof}

 The following Lemma will be useful for our further analysis (see Lemma 4.5 of \cite{RuzhanskyTurunenWirth2014}).
 \begin{lemma}\label{LemmaforHypoelliptic}
 Let $m\geq m_0,$ $0\leq \delta< \rho\leq 1,$ and let us fix difference operators $\{\mathbb{D}_{ij}\}_{1\leq i,j\leq d_{\xi_0}}$ corresponding to some representation $\xi_0\in [\xi_{0}]\in \widehat{G}.$ Let the matrix symbol $a=a(x,\xi)$ satisfy
 \begin{equation}
 \Vert \mathbb{D}^\gamma\partial_x^\beta a(x,\xi)\Vert_{op}\leq C_{\gamma\beta}\langle \xi \rangle^{m-\rho|\gamma|+\delta|\beta|}
 \end{equation}for all multi-index $\beta$ and $\gamma.$ Assume that $a(x,\xi)$ is invertible for all $x\in G$ and $[\xi]\in \widehat{G}$ and satisfies 
 \begin{equation}
 \Vert a(x,\xi)^{-1}\Vert_{op}\leq C\langle \xi \rangle^{-m_0},\,\,x\in G,\,[\xi]\in \widehat{G},
 \end{equation} and if $m_0\neq m,$ in addition that
 \begin{equation}
     \Vert a(x,\xi)^{-1}[\mathbb{D}^\gamma\partial_x^\beta a(x,\xi)]\Vert_{op}\leq C_{\gamma\beta}\langle \xi \rangle^{\rho|\gamma|+\delta|\beta|}.
 \end{equation}Then the following inequalities
 \begin{equation}
 \Vert \mathbb{D}^\gamma\partial_x^\beta[ a(x,\xi)^{-1}]\Vert_{op}\leq C_{\gamma\beta}'\langle \xi \rangle^{m_0-\rho|\gamma|+\delta|\beta|}
 \end{equation} hold true for all multi-indices $\beta$ and $\gamma.$
 \end{lemma}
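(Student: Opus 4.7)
The plan is to prove the claimed bound by induction on $N:=|\gamma|+|\beta|$. The base case $N=0$ is precisely the standing hypothesis $\Vert a(x,\xi)^{-1}\Vert_{op}\leq C\langle\xi\rangle^{-m_0}$.

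For the inductive step I apply $\mathbb{D}^\gamma\partial_x^\beta$ to both sides of the identity $a(x,\xi)\cdot a(x,\xi)^{-1}=I_{d_\xi}$. The ordinary Leibniz rule handles $\partial_x^\beta$; for the iterated difference operator $\mathbb{D}^\gamma$, the Leibniz-type expansion induced by the matrix structure of $\mathbb{D}_{\xi_0}=\xi_0(\cdot)-I_{d_{\xi_0}}$ gives an identity of the form
\begin{equation*}
0=\mathbb{D}^\gamma\partial_x^\beta(a\cdot a^{-1})=a\cdot\bigl(\mathbb{D}^\gamma\partial_x^\beta a^{-1}\bigr)+\sum_{(\gamma_1,\beta_1)\neq(0,0)}c^{\gamma\beta}_{\gamma_1\beta_1}\,(\mathbb{D}^{\gamma_1}\partial_x^{\beta_1}a)(\mathbb{D}^{\gamma_2}\partial_x^{\beta_2}a^{-1}),
\end{equation*}
where each $(\gamma_2,\beta_2)$ on the right satisfies $|\gamma_2|+|\beta_2|<N$ and the multi-indices obey $|\gamma_1|+|\gamma_2|\geq|\gamma|$, $|\beta_1|+|\beta_2|=|\beta|$. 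Isolating the leading term and multiplying on the left by $a^{-1}$ yields the explicit formula
\begin{equation*}
\mathbb{D}^\gamma\partial_x^\beta a^{-1}=-\sum_{(\gamma_1,\beta_1)\neq(0,0)}c^{\gamma\beta}_{\gamma_1\beta_1}\,\bigl[a^{-1}\,\mathbb{D}^{\gamma_1}\partial_x^{\beta_1}a\bigr]\bigl(\mathbb{D}^{\gamma_2}\partial_x^{\beta_2}a^{-1}\bigr).
\end{equation*}

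The bracketed factor obeys $\Vert a^{-1}\,\mathbb{D}^{\gamma_1}\partial_x^{\beta_1}a\Vert_{op}\leq C\langle\xi\rangle^{-\rho|\gamma_1|+\delta|\beta_1|}$: when $m_0=m$ this is automatic from the bound on $\Vert a^{-1}\Vert_{op}$ combined with the symbol estimate for $a$, while when $m_0<m$ it is exactly the supplementary hypothesis. Since $|\gamma_2|+|\beta_2|<N$, the inductive hypothesis gives $\Vert\mathbb{D}^{\gamma_2}\partial_x^{\beta_2}a^{-1}\Vert_{op}\leq C\langle\xi\rangle^{-m_0-\rho|\gamma_2|+\delta|\beta_2|}$. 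Multiplying the two operator-norm bounds and using $|\gamma_1|+|\gamma_2|\geq|\gamma|$, $|\beta_1|+|\beta_2|=|\beta|$, together with $\rho\geq 0$, each summand is controlled by $C\langle\xi\rangle^{-m_0-\rho|\gamma|+\delta|\beta|}$; summing the finitely many terms closes the induction.

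The hard part is the precise bookkeeping of the Leibniz expansion for $\mathbb{D}^\gamma$: the elementary rule for the matrix difference $\mathbb{D}_{\xi_0}$ carries a non-classical cross term inherited from the unitary representation structure, so some summands satisfy $|\gamma_1|+|\gamma_2|>|\gamma|$ rather than $=|\gamma|$. The inequality $\rho\geq 0$ ensures this excess does not damage the final exponent, and the essential structural observation is that the unique summand with $|\gamma_2|+|\beta_2|=N$ is the leading term $a\cdot\mathbb{D}^\gamma\partial_x^\beta a^{-1}$, so the isolation step and subsequent left-multiplication by $a^{-1}$ are unambiguous. Once this point is in place, the remaining estimates are routine.
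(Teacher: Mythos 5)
The paper does not actually prove this lemma; it quotes it from Lemma 4.5 of \cite{RuzhanskyTurunenWirth2014}, so your proposal has to stand on its own. Your overall strategy --- differencing $a\,a^{-1}=I$, isolating $a\cdot\mathbb{D}^\gamma\partial_x^\beta a^{-1}$, left-multiplying by $a^{-1}$, and estimating the remaining products via the hypothesis on $a^{-1}\mathbb{D}^{\gamma_1}\partial_x^{\beta_1}a$ --- is the right one. But the step you yourself flag as ``the essential structural observation,'' namely that the \emph{unique} summand with $|\gamma_2|+|\beta_2|=N$ is the leading term, is false, and this is exactly where the induction on $N$ fails to close. Already for a single first-order difference the Leibniz rule reads
\begin{equation*}
0=\mathbb{D}_{ij}(aa^{-1})=(\mathbb{D}_{ij}a)\,a^{-1}+a\,(\mathbb{D}_{ij}a^{-1})+\sum_{k}(\mathbb{D}_{ik}a)(\mathbb{D}_{kj}a^{-1})
\end{equation*}
(up to the exact placement of indices), and every cross term has $(\gamma_1,\beta_1)\neq(0,0)$ while still carrying a \emph{first-order} difference of $a^{-1}$, i.e.\ $|\gamma_2|+|\beta_2|=1=N$. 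The same happens at every order: any term in which each difference step lands on the second factor or on both factors has $|\gamma_2|=|\gamma|$, and if at least one ``both'' branch occurs it also has $\gamma_1\neq0$. So after your isolation step the unknowns of top order $N$ reappear on the right-hand side (with different matrix indices), and the inductive hypothesis for smaller $N$ does not apply to them.

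The standard repair is an absorption (or, equivalently, a perturbative matrix-inversion) argument rather than naive induction: each offending term, after left-multiplication by $a^{-1}$, carries a coefficient $a^{-1}\mathbb{D}^{\gamma_1}\partial_x^{\beta_1}a=O(\langle\xi\rangle^{-\rho|\gamma_1|})=O(\langle\xi\rangle^{-\rho})$ with $|\gamma_1|\geq1$ and $\rho>\delta\geq0$. Setting $S(\xi):=\max_{|\gamma'|=|\gamma|}\langle\xi\rangle^{m_0+\rho|\gamma|-\delta|\beta|}\Vert\mathbb{D}^{\gamma'}\partial_x^\beta a^{-1}\Vert_{op}$, the identity yields $S\leq C+C'\langle\xi\rangle^{-\rho}S$, hence $S\leq 2C$ for $\langle\xi\rangle$ large, while the finitely many remaining $[\xi]$ are handled trivially by compactness in $x$. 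Without this step your argument is incomplete. Two minor remarks: your claim that the lower-order terms (those with $|\gamma_2|+|\beta_2|<N$) are controlled by the inductive hypothesis, and that the excess $|\gamma_1|+|\gamma_2|>|\gamma|$ is harmless because $\rho\geq0$, is correct; and you tacitly use the supplementary hypothesis in the form $\Vert a^{-1}\mathbb{D}^\gamma\partial_x^\beta a\Vert_{op}\leq C\langle\xi\rangle^{-\rho|\gamma|+\delta|\beta|}$, whereas the statement as printed has $+\rho|\gamma|$ --- that is a sign typo in the paper (the source has the minus sign), and the minus sign is indeed what both your estimate and the absorption step require.
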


\subsection{Embedding properties}

Now, we compare subelliptic Besov spaces associated to the sub-Laplacian with Besov spaces associated to the Laplacian on the group. We first analyse it for Sobolev spaces modelled on $L^p$-spaces. Our starting point is the following estimate (see Proposition 3.1 of \cite{GarettoRuzhansky2015}), 
\begin{equation}
  c\langle \xi\rangle^{\frac{1}{\kappa}}\leq  1+\nu_{ii}(\xi)\leq \sqrt{2}\langle \xi\rangle,\,\,\langle \xi\rangle:=(1+\lambda_{[\xi]})^\frac{1}{2},
 \end{equation} where $\kappa$ is the order in the H\"ormander condition. We immediately have the following estimate,
 \begin{equation}\label{GarettoRuzhanskyIneq}
  \langle \xi\rangle^{\frac{1}{\kappa}}\lesssim  (1+\nu_{ii}(\xi)^2)^{\frac{1}{2}}\lesssim \langle \xi\rangle.
\end{equation}
\begin{remark}[Order for negative powers of $(1+\mathcal{L})^{\frac{1}{2}}$]\label{orderforpowers}   
For $1\leq k<n:=\dim G$ and  $s>0,$ define the operator $\mathcal{M}_s:=(1+\mathcal{L})^{\frac{s}{2}}.$ We will show that although $\mathcal{M}_{s}\in \textnormal{Op}(\mathscr{S}^{s}_{1,0}(G)),$ we have $\mathcal{M}_{-s}:=(1+\mathcal{L})^{-\frac{s}{2}}\in \textnormal{Op}(\mathscr{S}^{-s/\kappa}_{-1/\kappa,0}(G)).$ The matrix-valued symbol associated to $\mathcal{M}=\mathcal{M}_1$ which we denote by $\widehat{ \mathcal{M}}_1(\xi)$ is given by \begin{equation}\widehat{ \mathcal{M} }_1(\xi)=\textnormal{diag}[(1+\nu_{ii}(\xi)^2)^{\frac{1}{2}}]_{1\leq i\leq d_\xi},\,\,[\xi]\in \widehat{G},\end{equation}
\end{remark}
\noindent by a suitable choice of the basis in the representation spaces. Because, $\mathcal{M}_2\in \textnormal{Op}(\mathscr{S}^{2}_{1,0}(G)),$ the matrix valued functional calculus allow us to conclude that $\mathcal{M}_1\in \textnormal{Op}(\mathscr{S}^{1}_{1,0}(G)),$ (see Corollary 4.3 of \cite{RuzhanskyWirth2014}). Therefore, we have that the matrix valued-symbol $\widehat{ \mathcal{M} }_1$ satisfies estimates of the type,
\begin{equation}
 \Vert \mathbb{D}^\gamma \widehat{\mathcal{M}  }_1(\xi)\Vert_{op}\leq C_{\gamma}\langle \xi \rangle^{1-|\gamma|}\leq C_\gamma\langle \xi \rangle^{1-\rho|\gamma|},\,0<\rho\leq 1,
 \end{equation}for all multi-indices $\beta$ and $\gamma.$ On the other hand, from \eqref{GarettoRuzhanskyIneq}, we have
 \begin{equation}
     \Vert \widehat{\mathcal{M}  }_1(\xi)^{-1}\Vert_{op}=\sup_{1\leq i\leq d_\xi}(1+\nu_{ii}^{2}(\xi))^{-\frac{1}{2}}\lesssim \langle \xi\rangle^{-\frac{1}{\kappa}}.
 \end{equation}
If $|\gamma|=1,$ observe  that 
\begin{equation}
     \Vert \widehat{ \mathcal{M} }_1(\xi)^{-1}[\mathbb{D}^\gamma \widehat{ \mathcal{M} }_1(\xi)]\Vert_{op}\leq \Vert \widehat{ \mathcal{M}  }_1(\xi)^{-1}\Vert_{op}\Vert \mathbb{D}^\gamma \widehat{\mathcal{M}  }_1(\xi)\Vert_{op}\lesssim   C_{\gamma\beta}\langle \xi \rangle^{-1/\kappa}.
 \end{equation} If we iterated this process for $|\gamma|\geq 2,$  we can prove that
\begin{equation}
     \Vert \widehat{ \mathcal{M} }_1(\xi)^{-1}[\mathbb{D}^\gamma \widehat{ \mathcal{M} }_1(\xi)]\Vert_{op}\leq \Vert \widehat{ \mathcal{M} }_1(\xi)^{-1}\Vert_{op}\Vert \mathbb{D}^\gamma \widehat{ \mathcal{M}}_1(\xi)\Vert_{op}\lesssim   C_{\gamma\beta}\langle \xi \rangle^{-|\gamma|/\kappa}.
\end{equation} Indeed,
\begin{align*}
    \Vert \widehat{ \mathcal{M} }_1(\xi)^{-1}\Vert_{op}\Vert \mathbb{D}^\gamma \widehat{ \mathcal{M}}_1(\xi)\Vert_{op}\lesssim \langle \xi\rangle^{-1/\kappa} \langle \xi\rangle^{-(|\gamma|-1)}\leq \langle \xi\rangle^{ {-1/\kappa} -(|\gamma|-1)/\kappa}= \langle \xi\rangle^{-|\gamma|/\kappa}.
\end{align*}
According to Lemma \ref{LemmaforHypoelliptic} and Theorem 4.2 of \cite{RuzhanskyWirth2014}, the analysis above lead us to conclude that $\mathcal{M}_{-s}=(1+\mathcal{L})^{-\frac{s}{2}}\in \textnormal{Op}(\mathscr{S}^{-s/\kappa}_{\,\,1/\kappa,0}(G))$ for all $s>0.$

In order to compare  subelliptic Besov spaces with Besov spaces associated to the Laplacian, we  start by considering the problem for Sobolev spaces and later extend it to Besov spaces. So, we have the following theorem. Here, $\varkappa:=[n/2]+1$, is the smallest  integer larger than $n/2,$ $n=\dim(G).$ 
\begin{theorem}\label{SubHvsEllipH} Let $G$ be a compact Lie group and let us consider the sub-Laplacian $\mathcal{L}=-(X_1^2+\cdots +X_k^2),$ where the system of vector fields $X=\{X_i\}_{i=1}^k$ satisfies the H\"ormander condition of order $\kappa$.
Let $s\geq 0,$ $\varkappa:=[\dim(G)/2]+1,$ and  $1<p<\infty.$ Then we have the continuous embeddings
\begin{equation}
L_s^{p}(G)\hookrightarrow L_s^{p,\mathcal{L}}(G) \hookrightarrow L_{ \frac{s}{\kappa}-\varkappa(1-\frac{1}{\kappa})\left|\frac{1}{2}-\frac{1}{p}\right|  }^{p}(G).
\end{equation} More precisely, for every $s\geq 0$ there exist constants $C_a>0$ and $C_{b}>0$ satisfying,
\begin{equation}\label{Embd1}
   C_a \Vert f\Vert_{ L_{  s_{\kappa,\varkappa} }^{p}(G) }\leq \Vert f\Vert_{L_s^{p,\mathcal{L}}(G)},\,\,f\in L_s^{p,\mathcal{L}}(G),
\end{equation}
where $s_{\kappa,\varkappa}:=\frac{s}{\kappa}-\varkappa(1-\frac{1}{\kappa})\left|\frac{1}{2}-\frac{1}{p}\right|,$ and 
\begin{equation}\label{Embd2}
    \Vert f\Vert_{ L_s^{p,\mathcal{L}}(G)}\leq C_b\Vert f\Vert_{L_s^{p}(G)},\,\,f\in L_s^{p}(G).
\end{equation}
Consequently, we have the following embeddings
\begin{equation}
L_{ -\frac{s}{\kappa}+\varkappa(1-\frac{1}{\kappa})\left|\frac{1}{2}-\frac{1}{p}\right| }^{p}(G)\hookrightarrow L_{-s}^{p,\mathcal{L}}(G) \hookrightarrow L_{-s}^{p}(G) .
\end{equation}
\end{theorem}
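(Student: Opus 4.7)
The plan is to reduce each of the two inequalities to an $L^p$-boundedness statement for a Fourier multiplier and then to invoke the Mihlin-type Theorem \ref{Mihlincondition}. Setting $g=(1+\mathcal{L}_G)^{s/2}f$ shows that \eqref{Embd2} is equivalent to the $L^p(G)$-boundedness of
\[
T_1:=(1+\mathcal{L})^{s/2}(1+\mathcal{L}_G)^{-s/2},
\]
while setting $g=(1+\mathcal{L})^{s/2}f$ reduces \eqref{Embd1} to the $L^p(G)$-boundedness of
\[
T_2:=(1+\mathcal{L}_G)^{s_{\kappa,\varkappa}/2}(1+\mathcal{L})^{-s/2}.
\]
Both operators are left-invariant, and since $\widehat{\mathcal{L}_G}(\xi)=\lambda_{[\xi]}I_{d_\xi}$ is scalar while $\widehat{\mathcal{L}}(\xi)$ is diagonal in the same basis, the two factors commute, so the full symbols of $T_1$ and $T_2$ are simply the products of the corresponding spectral symbols.

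For the easy inclusion \eqref{Embd2}, I would combine the upper estimate in \eqref{GarettoRuzhanskyIneq} with the fact recorded in Remark \ref{orderforpowers} that $(1+\mathcal{L})^{s/2}\in\textnormal{Op}(\mathscr{S}^{s}_{1,0}(G))$. Multiplying its symbol by that of $(1+\mathcal{L}_G)^{-s/2}\in\textnormal{Op}(\mathscr{S}^{-s}_{1,0}(G))$ produces $\sigma_{T_1}\in\mathscr{S}^{0}_{1,0}(G)$, and Theorem \ref{Mihlincondition} applied with $\rho=1$ (for which the loss $\varkappa(1-\rho)|\tfrac12-\tfrac1p|$ vanishes) immediately yields $T_1:L^p(G)\to L^p(G)$ for every $1<p<\infty$, which is \eqref{Embd2}.

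The heart of the proof is \eqref{Embd1}. By Remark \ref{orderforpowers}, which rests on the lower estimate in \eqref{GarettoRuzhanskyIneq} together with Lemma \ref{LemmaforHypoelliptic}, one has $(1+\mathcal{L})^{-s/2}\in\textnormal{Op}(\mathscr{S}^{-s/\kappa}_{1/\kappa,0}(G))$. Since $\langle\xi\rangle^{s_{\kappa,\varkappa}}I_{d_\xi}\in\mathscr{S}^{s_{\kappa,\varkappa}}_{1,0}(G)\subset\mathscr{S}^{s_{\kappa,\varkappa}}_{1/\kappa,0}(G)$, the Leibniz rule for the difference operators $\mathbb{D}^\gamma$ gives
\[
\|\mathbb{D}^\gamma\sigma_{T_2}(\xi)\|_{op}\lesssim\langle\xi\rangle^{s_{\kappa,\varkappa}-s/\kappa-|\gamma|/\kappa}
\]
for every multi-index $\gamma$. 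The decisive identity
\[
s_{\kappa,\varkappa}-\frac{s}{\kappa}=-\varkappa\Bigl(1-\frac{1}{\kappa}\Bigr)\Bigl|\frac{1}{2}-\frac{1}{p}\Bigr|
\]
places $\sigma_{T_2}$ in $\mathscr{S}^{-\varkappa(1-1/\kappa)|1/2-1/p|}_{1/\kappa,0}(G)$, so that the negative order of $\sigma_{T_2}$ matches exactly the loss $\varkappa(1-\rho)|\tfrac12-\tfrac1p|$ appearing in Theorem \ref{Mihlincondition} at $\rho=1/\kappa$. Applying that theorem to the order-zero Fourier multiplier $T_2\circ(1+\mathcal{L}_G)^{\varkappa(1-1/\kappa)|\tfrac12-\tfrac1p|/2}$ and using the isomorphism $(1+\mathcal{L}_G)^{-\varkappa(1-1/\kappa)|\tfrac12-\tfrac1p|/2}:L^p(G)\to L^p_{\varkappa(1-1/\kappa)|\tfrac12-\tfrac1p|}(G)$ to rearrange the composition produces $T_2:L^p(G)\to L^p(G)$, which is \eqref{Embd1}.

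The last chain of embeddings then follows by dualising \eqref{Embd1} and \eqref{Embd2}, together with the identifications $(L^{p}_{s}(G))^{*}=L^{p'}_{-s}(G)$ and $(L^{p,\mathcal{L}}_{s}(G))^{*}=L^{p',\mathcal{L}}_{-s}(G)$ and the symmetry $|\tfrac12-\tfrac{1}{p'}|=|\tfrac12-\tfrac1p|$. I expect the main obstacle to be the verification of the symbol class for $T_2$ through the Leibniz rule in combination with Lemma \ref{LemmaforHypoelliptic}, and the careful index bookkeeping that matches the negative order of $\sigma_{T_2}$ with the Mihlin loss at $\rho=1/\kappa$; everything else is routine once Remark \ref{orderforpowers} and Theorem \ref{Mihlincondition} are at our disposal.
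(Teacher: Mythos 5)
Your proposal is correct and follows essentially the same route as the paper: both reduce \eqref{Embd2} to the $L^p$-boundedness of the order-zero multiplier $(1+\mathcal{L})^{s/2}(1+\mathcal{L}_G)^{-s/2}$ and \eqref{Embd1} to Theorem \ref{Mihlincondition} at $\rho=1/\kappa$ applied to $(1+\mathcal{L}_G)^{s/(2\kappa)}(1+\mathcal{L})^{-s/2}\in\textnormal{Op}(\mathscr{S}^{0}_{1/\kappa,0}(G))$, which is exactly your $T_2\circ(1+\mathcal{L}_G)^{\varkappa(1-1/\kappa)|1/2-1/p|/2}$. The only (immaterial) difference is that the paper first establishes the negative-order embeddings and obtains \eqref{Embd1}--\eqref{Embd2} by duality, whereas you prove the positive-order inequalities directly and dualise only for the final chain.
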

\begin{proof}
The proof will be based on the continuity properties of the operators
\begin{equation}
    \mathcal{R}_s:=(1+\mathcal{L}_G)^{\frac{s}{2}} (1+\mathcal{L})^{-\frac{s}{2}} \textnormal{  and  }  \mathcal{R}_s^{'}:=(1+\mathcal{L}_G)^{-\frac{s}{2}} (1+\mathcal{L})^{\frac{s}{2}},\,\,s\geq 0.
\end{equation} From Remark \ref{orderforpowers} we deduce that $\mathcal{R}_s\in \textnormal{Op}(\mathscr{S}^{s-\frac{s}{\kappa}}_{1/\kappa,0}(G))$   and $\mathcal{R}_s'\in  \textnormal{Op}(\mathscr{S}^{0}_{1,0}(G)).$
 Let us observe that $\mathcal{R}_s'\in \textnormal{Op}(\mathscr{S}^{0}_{1,0}(G))$ implies that for all $s\geq 0,$ the operator $\mathcal{R}_s'$ extends to a bounded operator on $L^p(G)$ for all $1<p<\infty.$ So, for $C_b:=\Vert \mathcal{R}_s' \Vert_{\mathscr{B}(L^p(G))},$ we have
\begin{equation}
    \Vert (1+\mathcal{L}_G)^{-\frac{s}{2}} (1+\mathcal{L})^{\frac{s}{2}} f\Vert_{L^p(G)}\leq C_b\Vert f\Vert_{L^p(G)},
\end{equation} which in turns implies 
\begin{equation}
    \Vert (1+\mathcal{L}_G)^{-\frac{s}{2}}  f\Vert_{L^p(G)}\leq C_b\Vert (1+\mathcal{L})^{-\frac{s}{2}}f\Vert_{L^p(G)}.
\end{equation} So, we provide the embedding $L_{-s}^{p,\mathcal{L}}(G) \hookrightarrow L_{-s}^{p}(G)$ and by duality we have $ L_s^{p}(G) \hookrightarrow L_s^{p,\mathcal{L}}(G).$ The last embedding is equivalent to the inequality
    \begin{equation}
    \Vert f\Vert_{ L_s^{p,\mathcal{L}}(G) }\leq C_b \Vert f\Vert_{L_s^{p}(G)},\,\,f\in L_s^{p}(G).
\end{equation} For the second part of the proof, let us define  $$T_s:=(1+\mathcal{L}_G)^{\frac{1}{2}(\frac{s}{\kappa}-s)}\mathcal{R}_{s}=(1+\mathcal{L}_G)^{\frac{s}{2\kappa}}(1+\mathcal{L})^{-\frac{s}{2}}.$$
Now, by Remark \ref{orderforpowers} the operator $T_s$ belongs to the class $\textnormal{Op}(\mathscr{S}^{0}_{\rho,0}(G)),$ $\rho=1/\kappa,$ and by Theorem \ref{Mihlincondition} and Remark \ref{removeeveness} we obtain its boundedness from $L_{ \varkappa(1-\frac{1}{\kappa})\left|\frac{1}{2}-\frac{1}{p} \right|  }^{p}(G)$ into $L^p(G)$ for all $1<p<\infty,$ where $\varkappa:=[n/2]+1.$ Consequently, we obtain 
\begin{equation}\label{Auxiliar23}
C_a\Vert T_s f\Vert_{   L^p(G)}\leq \Vert f \Vert_{ L_{ \varkappa(1-\frac{1}{\kappa})\left|\frac{1}{2}-\frac{1}{p} \right| }^{p}(G)  },\,\,C_{a}:=1/\Vert T\Vert_{\mathscr{B}( L_{ \varkappa(1-\frac{1}{\kappa})\left|\frac{1}{2}-\frac{1}{p} \right| }^{p},L^p)}.
\end{equation} Similar to the first part of the proof, we have
\begin{equation}\label{auxiliarprime}
    C_a\Vert (1+\mathcal{L})^{-\frac{s}{2}}f \Vert_{L^p(G)}\leq\Vert f\Vert_{L_{ \varkappa(1-\frac{1}{\kappa})\left|\frac{1}{2}-\frac{1}{p} \right|-\frac{s}{\kappa} }^{p}(G)}.
\end{equation}
Indeed, by using that the operators $(1+\mathcal{L}_G)^{\frac{s}{2\kappa}}$ and $(1+\mathcal{L})^{-\frac{s}{2}}$ commute,  from \eqref{Auxiliar23}, we have the inequality,
\begin{equation}\label{auxiliar32}
C_a\Vert (1+\mathcal{L})^{-\frac{s}{2}} (1+\mathcal{L}_G)^{\frac{s}{2\kappa}}f\Vert_{   L^p(G)}\leq \Vert f \Vert_{ L_{ \varkappa(1-\frac{1}{\kappa})\left|\frac{1}{2}-\frac{1}{p} \right| }^{p}(G)  },
\end{equation}
which implies \eqref{auxiliarprime} if in \eqref{auxiliar32} we change $f$ by $(1+\mathcal{L}_G)^{-\frac{s}{2\kappa}}f.$ So, 
we conclude that $L_{-\frac{s}{\kappa}+\varkappa(1-\frac{1}{\kappa})\left|\frac{1}{2}-\frac{1}{p}\right|  }^{p}(G)\hookrightarrow L_{-s}^{p,\mathcal{L}}(G),$ and by duality we obtain the embedding $L_s^{p,\mathcal{L} }(G)\hookrightarrow L^{p }_{\frac{s}{\kappa}-\varkappa(1-\frac{1}{\kappa})\left|\frac{1}{2}-\frac{1}{p}\right|  }(G),$ which implies the estimate,
\begin{equation}
   C_a \Vert f\Vert_{ L_{ s_{\kappa,\varkappa} }^{p}(G) }\leq \Vert f\Vert_{L_s^{p,\mathcal{L}}(G)},\,\,f\in L_s^{p,\mathcal{L}}(G),
\end{equation}
when $s_{\kappa,\varkappa}=\frac{s}{\kappa}-\varkappa(1-\frac{1}{\kappa})\left|\frac{1}{2}-\frac{1}{p}\right|.$ Thus, we finish the proof.
\end{proof}
\begin{remark}
For $p=2,$ Theorem \ref{SubHvsEllipH} was obtained in \cite{GarettoRuzhansky2015}. In this case for $s>0,$ we have $s_{\varkappa,\kappa}=s/\kappa.$ Consequently, 
\begin{equation*}
    H^{s}(G)\hookrightarrow H^{s,\mathcal{L}}(G) \hookrightarrow H^{\frac{s}{\kappa}}(G) \hookrightarrow L^2(G)  \hookrightarrow  H^{-\frac{s}{\kappa}}(G)\hookrightarrow H^{-s,\mathcal{L}}(G) \hookrightarrow H^{-s}(G).
\end{equation*}
Also, as it was pointed out in \cite{GarettoRuzhansky2015}, for integers $s\in\mathbb{N}$ the embedding $H^{ s,\mathcal{L} }(G)\hookrightarrow H^{ \frac{s}{\kappa} }(G) ,$ is in fact, Theorem 13 in \cite{RothschildStein76}.
\end{remark}
Now, we prove the main theorem of this subsection.

\begin{theorem}\label{SubBesovvsEllipBesov} Let $G$ be a compact Lie group and let us consider the sub-Laplacian $\mathcal{L}=-(X_1^2+\cdots +X_k^2),$ where the system of vector fields $X=\{X_i\}_{i=1}^k$ satisfies the H\"ormander condition of order $\kappa$.
Let $s\geq 0,$ $0<q\leq \infty,$ and $1<p< \infty.$ Then we have the continuous embeddings
\begin{equation}
B^{s}_{p,q}(G)\hookrightarrow B^{s,\mathcal{L}}_{p,q}(G) \hookrightarrow B^{\frac{s}{\kappa}-\varkappa(1-\frac{1}{\kappa})\left|\frac{1}{2}-\frac{1}{p}\right|}_{p,q}(G).
\end{equation} More precisely, for every $s\geq 0$ there exist constants $C_a>0$ and $C_{b}>0$ satisfying,
\begin{equation}
   C_a \Vert f\Vert_{ B^{s_{\kappa,\varkappa}}_{p,q}(G) }\leq \Vert f\Vert_{B^{s,\mathcal{L}}_{p,q}(G)},\,\,f\in B^{s,\mathcal{L}}_{p,q}(G),
\end{equation}
where $s_{\kappa,\varkappa}:=\frac{s}{\kappa}-\varkappa(1-\frac{1}{\kappa})\left|\frac{1}{2}-\frac{1}{p}\right|,$ and 
\begin{equation}
    \Vert f\Vert_{B^{s,\mathcal{L}}_{p,q}(G)}\leq C_b\Vert f\Vert_{B^{s}_{p,q}(G)},\,\,f\in B^{s}_{p,q}(G).
\end{equation}
Consequently, we have the following embeddings
\begin{equation}
B^{-\frac{s}{\kappa}+\varkappa(1-\frac{1}{\kappa})\left|\frac{1}{2}-\frac{1}{p}\right|}_{p,q}(G)\hookrightarrow B^{-s,\mathcal{L}}_{p,q}(G) \hookrightarrow B^{-s}_{p,q}(G).
\end{equation}
\end{theorem}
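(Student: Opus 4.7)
The plan is to lift Theorem \ref{SubHvsEllipH} to the Besov scale by real interpolation. The pseudo-differential operators $\mathcal{R}_s'=(1+\mathcal{L}_G)^{-s/2}(1+\mathcal{L})^{s/2}\in \textnormal{Op}(\mathscr{S}^0_{1,0}(G))$ and $T_s=(1+\mathcal{L}_G)^{s/(2\kappa)}(1+\mathcal{L})^{-s/2}\in \textnormal{Op}(\mathscr{S}^0_{1/\kappa,0}(G))$ introduced in the proof of Theorem \ref{SubHvsEllipH}, together with the uniform $L^p$-boundedness of spectrally localised multipliers supplied by the Corollary to Theorem \ref{Nikolskii}, remain the main technical ingredients.

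The first step is to identify both Besov scales as real interpolation spaces of the associated Sobolev couples:
\begin{equation*}
 B^{s,\mathcal{L}}_{p,q}(G)=(L^{p,\mathcal{L}}_{s_0}(G),L^{p,\mathcal{L}}_{s_1}(G))_{\theta,q},\qquad B^{s}_{p,q}(G)=(L^{p}_{s_0}(G),L^{p}_{s_1}(G))_{\theta,q},
\end{equation*}
with $s=(1-\theta)s_0+\theta s_1$, $0<\theta<1$, $s_0\neq s_1$, $1<p<\infty$, $0<q\leq\infty$. The proof of each identity mimics the classical one: one computes the Peetre $K$-functional dyadically via $f=\sum_\ell \psi_\ell(D)f$, showing that on a single dyadic band the subelliptic Bessel potential $(1+\mathcal{L})^{s/2}$ acts as multiplication by $2^{\ell s}$ up to a uniform constant. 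This block-wise equivalence follows from the spectral identity $(1+\mathcal{L})^{s/2}\psi_\ell(D)=2^{\ell s} m_\ell((1+\mathcal{L})^{1/2})\psi_\ell(D)$ with $m_\ell(t)=(2^{-\ell}t)^s\psi_\ell(t)$ bounded and supported in $[2^\ell,2^{\ell+1})$, combined with the Corollary to Theorem \ref{Nikolskii} (applied with $\tilde\ell=2^{\ell+1}$). Optimising the split $f=\sum_{\ell<N}\psi_\ell(D)f+\sum_{\ell\geq N}\psi_\ell(D)f$ in $N=N(t)$ then matches $K(t,f;L^{p,\mathcal{L}}_{s_0},L^{p,\mathcal{L}}_{s_1})$ to the dyadic sum defining the Besov norm, yielding the identity; repeating the same calculation with $\mathcal{L}_G$ in place of $\mathcal{L}$ produces the classical identification.

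Granted these identifications, both embeddings are immediate by the functoriality of real interpolation. For the first, choose $0\leq s_0<s_1$ with $s=(1-\theta)s_0+\theta s_1$; Theorem \ref{SubHvsEllipH} gives $L^p_{s_i}(G)\hookrightarrow L^{p,\mathcal{L}}_{s_i}(G)$ for $i=0,1$, and interpolation at level $(\theta,q)$ delivers $B^{s}_{p,q}(G)\hookrightarrow B^{s,\mathcal{L}}_{p,q}(G)$, which is the estimate with constant $C_b$. For the second, Theorem \ref{SubHvsEllipH} gives $L^{p,\mathcal{L}}_{s_i}(G)\hookrightarrow L^{p}_{\sigma_i}(G)$ with $\sigma_i=s_i/\kappa-\varkappa(1-1/\kappa)|1/2-1/p|$; since $s_i\mapsto\sigma_i$ is affine in $s_i$, interpolation at $(\theta,q)$ produces the target regularity $(1-\theta)\sigma_0+\theta\sigma_1=s/\kappa-\varkappa(1-1/\kappa)|1/2-1/p|=s_{\kappa,\varkappa}$, hence the embedding $B^{s,\mathcal{L}}_{p,q}(G)\hookrightarrow B^{s_{\kappa,\varkappa}}_{p,q}(G)$ with constant $C_a$. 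The negative-order ``consequently'' chain is then obtained by duality $(B^{s,\mathcal{L}}_{p,q}(G))^{*}\simeq B^{-s,\mathcal{L}}_{p',q'}(G)$, exactly as at the end of the proof of Theorem \ref{SubHvsEllipH}.

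The main obstacle is the interpolation identification for the subelliptic Besov spaces: although its proof is formally parallel to the classical one and the essential analytic input (uniform $L^p$-boundedness of spectral projectors $\psi_\ell(D)$) is already available, one must carefully redo the $K$-functional computation in the subelliptic frame, where the spectral variable measures $(1+\mathcal{L})^{1/2}$ rather than $(1+\mathcal{L}_G)^{1/2}$. An alternative route that bypasses real interpolation imitates Theorem \ref{SubHvsEllipH} dyadic-block-wise: one applies $\mathcal{R}_s'$ (respectively $T_s$) to $\psi_\ell(D)f$ and sums in $\ell^q(L^p)$, which trades the $K$-functional lemma for a direct verification that operators in $\textnormal{Op}(\mathscr{S}^0_{1,0}(G))$ and $\textnormal{Op}(\mathscr{S}^0_{1/\kappa,0}(G))$ remain bounded on the subelliptic Besov scale, something that can be extracted by adapting Theorems \ref{Mihlincondition} and \ref{DelgadoRuzhanskylimitedregularity}.
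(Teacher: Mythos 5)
Your argument is correct in substance, but it takes a genuinely different route from the paper. The paper works directly at the Besov level: it first proves the block-wise equivalence $\Vert f\Vert_{B^{s,\mathcal{L}}_{p,q}}\asymp\Vert\{\Vert\psi_j(D)f\Vert_{L^{p,\mathcal{L}}_{s}}\}_{j}\Vert_{\ell^q}$ by the spectral calculus, then inserts a Littlewood--Paley partition $\{\tilde\psi_{j'}\}$ adapted to $(1+\mathcal{L}_G)^{1/2}$, uses the near-orthogonality $\tilde\psi_{j}\psi_{j'}=0$ for $|j-j'|\geq j_0$ to reduce each dyadic band to finitely many cross terms, controls those terms uniformly in $L^p$ via Lemma \ref{DelgadoRuzhanskyModifiedLemma}, and finally applies the Sobolev estimates \eqref{Embd1}--\eqref{Embd2} of Theorem \ref{SubHvsEllipH} band by band, closing the negative-order chain by duality. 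Your main route instead realises both Besov scales as real interpolation spaces of the corresponding Sobolev couples and transfers Theorem \ref{SubHvsEllipH} by exactness of the $(\theta,q)$-functor; the affine dependence of $\sigma_i$ on $s_i$ correctly reproduces the order $s_{\kappa,\varkappa}$, so the embeddings do follow formally. What this buys is a very short deduction once the identification $(L^{p,\mathcal{L}}_{s_0}(G),L^{p,\mathcal{L}}_{s_1}(G))_{(\theta,q)}=B^{s,\mathcal{L}}_{p,q}(G)$ is available --- and the paper does record this identity in Section \ref{Triebel} --- but it concentrates all the analytic work in proving that identification, where essentially the same dyadic estimates reappear inside the $K$-functional computation. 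The ``alternative route'' you sketch at the end is, in fact, the paper's actual proof.

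Two points deserve attention. First, the uniform $L^p$-boundedness of the sharp spectral cut-offs $\psi_\ell(D)$ of the sub-Laplacian, which your $K$-functional computation requires, does not follow from the Corollary to Theorem \ref{Nikolskii}: that corollary only bounds $T_{\tilde\ell}$ on the subspace $X_{p,\tilde\ell}$, on which it acts as the identity. You should instead invoke the subelliptic Littlewood--Paley theorem \eqref{LitPaleyTh} or Lemma \ref{DelgadoRuzhanskyModifiedLemma}. Second, for the embedding $B^{s,\mathcal{L}}_{p,q}(G)\hookrightarrow B^{s_{\kappa,\varkappa}}_{p,q}(G)$ the target orders $\sigma_i$ may be negative, so you need the Laplacian-scale interpolation identity for orders of arbitrary sign; this is standard via the lifting property of the Bessel potentials, but it should be stated, since the identity as recorded in the paper is only for $0<r_0<r_1$.
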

\begin{proof}
First, let us observe that for every $s\in \mathbb{R},$
\begin{equation}\label{estimatebesovsobolev}
    \Vert f\Vert_{B^{s,\mathcal{L}}_{p,q}(G)}\asymp \left\Vert   \{\Vert  \psi_j(D)f\Vert_{L_s^{p,\mathcal{L}}(G)} \}_{j\in\mathbb{N}}\right\Vert_{L^q(\mathbb{N})}.
\end{equation}
This can be proved by the functional calculus. Indeed, if $E_{\lambda}^{\mathcal{M}},$ $\lambda>0,$ denotes the spectral resolution associated to $\mathcal{M}=(1+\mathcal{L})^\frac{1}{2}$ and $0<q<\infty,$ we get
\begin{align*}
  2^{j s q} \Vert \psi_{j}(D)f\Vert_{L^p(G)}^q &= \left\Vert  \int\limits_{2^j}^{2^{j+1}} 2^{js } dE^{\mathcal{M}}_{\lambda}f \right\Vert^q_{L^p(G)}=\left\Vert \int\limits_{2^j}^{2^{j+1}} 2^{j s }\lambda^{-s}\lambda^{s} dE^{\mathcal{M}}_{\lambda}f \right\Vert^q_{L^p(G)}\\
  &\asymp \left\Vert \int\limits_{2^j}^{2^{j+1}} \lambda^{s} dE^{\mathcal{M}}_{\lambda}f \right\Vert^q_{L^p(G)}= \left\Vert \int\limits_{0}^{\infty} \lambda^{s} dE^{\mathcal{M}}_{\lambda} \int\limits_{2^j}^{2^{j+1}}  dE^{\mathcal{M}}_{\lambda}f \right\Vert^q_{L^p(G)}\\
  &=:\Vert (1+\mathcal{L})^\frac{s}{2}\psi_j(D)f \Vert_{L^p(G)}^q,
\end{align*} which proves the desired estimate \eqref{estimatebesovsobolev}. In the previous lines we have used the following property
\begin{equation*}
    \int\limits_{[a,b]\cap [c,d]}f(\lambda)g(\lambda)dE^{\mathcal{M}}_\lambda=\int\limits_{[a,b]}f(\mu)dE^{\mathcal{M}}_{\mu} \int\limits_{[c,d]}g(\lambda)dE^{\mathcal{M}}_\lambda,
\end{equation*}
of the functional calculus (see e.g. \cite{Weid}). For the second part of the proof, we choose a $S^1_0$-partition of unity (or Littlewood-Paley partition of the unity) $\{\tilde{\psi}_{j}\}.$ So, we assume the following properties (see e.g.  Definition 2.1 of Garetto \cite{Garetto}),
\begin{itemize}
    \item  $\tilde\psi_0 \in \mathscr{D}(-\infty,\infty),$ $\tilde\psi_0(t)=1$ for $|t|\leq 1,$ and  $\tilde\psi_0(t)=0$ for  $|t|\geq 2.$
    \item For every $j\geq 1, $ $\tilde\psi_{j}(t)=\tilde\psi_0(2^{-j}t)-\tilde\psi_{0}(2^{-j+1}t):=\tilde\psi_1(2^{-j+1}t),$
    \item $|\tilde{\psi}^{(\alpha)}_j(t)|\leq C_{\alpha}2^{-\alpha j},\,\,\alpha\in\mathbb{N}_0,$ and
\item $ \sum_{j} \tilde{\psi}_{j}(t)=1, $ $t\in\mathbb{R}.$
\end{itemize}

We will use the existence of an integer $j_0\in\mathbb{N}$ satisfying $\tilde{\psi}_{j}\psi_{j'}=0$ for all $j,j'$ with $|j-j'|\geq j_0,$  as follows. If $f\in C^\infty(G)$ and $\tilde\ell \in\mathbb{N},$ then
\begin{align*}
    \psi_{\tilde\ell}(D)f&=
    \sum_{j,j'}\tilde{\psi}_{j}((1+\mathcal{L})^\frac{1}{2})\psi_{\tilde\ell}(D)\tilde{\psi}_{j'}((1+\mathcal{L}_G)^\frac{1}{2})f\\
    &=\sum_{j,j'=\tilde\ell-j_0}^{\tilde\ell+j_0}\tilde{\psi}_{j}((1+\mathcal{L})^\frac{1}{2})\psi_{\tilde\ell}(D)\tilde{\psi}_{j'}((1+\mathcal{L}_G)^\frac{1}{2})f.
\end{align*}
So, the following estimate
\begin{equation}
    \Vert \psi_{\tilde\ell}(D)f  \Vert_{L^p}\leq \sum_{j,j'=\tilde\ell-j_0}^{\tilde\ell+j_0}\Vert \tilde{\psi}_{j}((1+\mathcal{L})^\frac{1}{2})\psi_{\tilde\ell}(D)\tilde{\psi}_{j'}((1+\mathcal{L}_G)^\frac{1}{2})f \Vert_{L^p},
\end{equation} holds true for every $\tilde\ell\in \mathbb{N}.$ From Lemma \ref{DelgadoRuzhanskyModifiedLemma} applied to every operator  $  \psi_{\tilde\ell}(D)\tilde{\psi}_{j}((1+\mathcal{L})^\frac{1}{2}) ,$ we have 
\begin{equation}
    \Vert \tilde{\psi}_{j}((1+\mathcal{L})^\frac{1}{2})\psi_{\tilde\ell}(D)\tilde{\psi}_{j'}((1+\mathcal{L}_G)^\frac{1}{2})f \Vert_{L^p}\leq C\Vert \tilde{\psi}_{j'}((1+\mathcal{L}_G)^\frac{1}{2})f \Vert_{L^p},
\end{equation} and
\begin{equation}
    \Vert \tilde{\psi}_{j}((1+\mathcal{L})^\frac{1}{2})\tilde{\psi}_{j'}((1+\mathcal{L}_G)^\frac{1}{2})f \Vert_{L^p}\leq C\Vert \tilde{\psi}_{j}((1+\mathcal{L})^\frac{1}{2})f \Vert_{L^p},
\end{equation}
where the constant $C$ is independent of $j,j'\in\mathbb{N}.$  Now, if $s\geq 0,$ we have 
\begin{align*}
    \Vert f\Vert_{B^{s,\mathcal{L}}_{p,q}} &=\left\Vert \left\{ 2^{j s}\Vert {\psi}_{j}(D)f \Vert_{L^p}  \right\}_{j\in\mathbb{N}}  \right\Vert_{\ell^q(\mathbb{N})}\\
    &\lesssim \left\Vert \left\{ \sum_{j'=\tilde\ell-j_0}^{\tilde\ell+j_0}2^{\tilde\ell s}\Vert \tilde{\psi}_{j'}((1+\mathcal{L}_G)^\frac{1}{2})f \Vert_{L^p}  \right\}_{\tilde\ell\in\mathbb{N}}  \right\Vert_{ \ell^q(\mathbb{N}) }\lesssim (2j_0+1) \Vert f\Vert_{B^{s}_{p,q}}. 
\end{align*}
This estimate proves the embedding $  B^{s}_{p,q}(G)\hookrightarrow B^{s,\mathcal{L}}_{p,q}(G)$ which by duality implies $  B^{-s,\mathcal{L}}_{p,q}(G)\hookrightarrow B^{-s}_{p,q}(G)$ for $s\geq 0.$ Now, for $s\geq 0$ and $s_{\kappa,\varkappa}:=\frac{s}{\kappa}-\varkappa(1-\frac{1}{\kappa})\left|\frac{1}{2}-\frac{1}{p}\right|,$
 we have
\begin{align*}
\Vert f\Vert_{ B^{s_{\kappa,\varkappa}}_{p,q}(G) } &\asymp \left\Vert   \{\Vert  \psi_{\tilde\ell }((1+\mathcal{L}_G)^{\frac{1}{2}})f\Vert_{L_{ s_{\kappa,\varkappa} }^{p}(G)} \}_{\tilde\ell\in\mathbb{N}}\right\Vert_{\ell^q(\mathbb{N})},
\end{align*}
which in turns implies,
\begin{align*}
\Vert f\Vert_{ B^{s_{\kappa,\varkappa}}_{p,q}(G) } &\leq  \left\Vert   \{   \sum_{j',j=\tilde\ell-j_0}^{\tilde\ell+j_0}\Vert  \psi_j((1+\mathcal{L}_G)^{\frac{1}{2}}) \psi_{j'}((1+\mathcal{L})^{\frac{1}{2}}) f\Vert_{L_{ s_{\kappa,\varkappa} }^{p}(G)} \}_{\tilde\ell\in\mathbb{N}}\right\Vert_{\ell^q(\mathbb{N})}\\
&\lesssim  \left\Vert   \{   \sum_{j',j=\tilde\ell-j_0}^{\tilde\ell+j_0}\Vert  \psi_{j'}((1+\mathcal{L})^{\frac{1}{2}}) f\Vert_{L_{s_{\kappa,\varkappa}}^{p}(G)} \}_{\tilde\ell\in\mathbb{N}}\right\Vert_{\ell^q(\mathbb{N})}\\
&\lesssim  \left\Vert   \{   \sum_{j',j=\tilde\ell-j_0}^{\tilde\ell+j_0}\Vert  \psi_{j'}((1+\mathcal{L})^{\frac{1}{2}}) f\Vert_{L_s^{p,\mathcal{L}}(G)} \}_{\tilde\ell\in\mathbb{N}}\right\Vert_{\ell^q(\mathbb{N})}\\
&\lesssim (2j_0+1)^2\left\Vert   \{   \Vert  \psi_{j'}((1+\mathcal{L})^{\frac{1}{2}}) f\Vert_{L_s^{p,\mathcal{L}}(G)} \}_{j'\in\mathbb{N}}\right\Vert_{\ell^q(\mathbb{N})}
\asymp\Vert f\Vert_{ B^{s,\mathcal{L}}_{p,q}(G) },
\end{align*}
where we have used \eqref{Embd1}. The last estimate proves the embedding $B^{s,\mathcal{L}}_{p,q}(G) \hookrightarrow B^{\frac{s}{\kappa}-\varkappa(1-\frac{1}{\kappa})\left|\frac{1}{2}-\frac{1}{p}\right|}_{p,q}(G).$ The duality argument implies that $B^{-s_{\kappa,\varkappa}}_{p,q}(G) \hookrightarrow B^{-s}_{p,q}(G)$ for every $s\geq 0.$ So, we finish the proof.
\end{proof}

\begin{remark}\label{RemarkExtra1}
With the notation of the previous result, the embedding  $L^{q}(G)\hookrightarrow {B}^{0,\mathcal{L}}_{q,\infty}(G),$ $1<q\leq \infty,$ is a consequence of the following estimate,
\begin{align*}
    \Vert f\Vert_{{B}^{0,\mathcal{L}}_{q,\infty} }= \sup_{\tilde\ell\in\mathbb{N}}\Vert \psi_{\tilde\ell}(D)f \Vert_{L^q}\lesssim \sup_{\tilde\ell\geq j_0}  \sum_{j=\tilde\ell-j_0}^{\tilde\ell+j_0}\Vert \tilde{\psi}_j((1+\mathcal{L})^\frac{1}{2})f \Vert_{L^q} \leq C(2j_0+1)\Vert f\Vert_{L^q}.
\end{align*}

\end{remark}
 \section{Subelliptic Triebel-Lizorkin spaces and interpolation of subelliptic Besov spaces}\label{Triebel}
 
 The Littlewood-Paley theorem \eqref{LitPaleyTh},
\begin{equation}
  \Vert f \Vert_{F^{0,\mathcal{L}}_{p,2}(G)    }:= \Vert   [\sum_{s\in\mathbb{N}_0}  |\psi_{s}(D)f(x)       |^{2}]^{\frac{1}{2}}\Vert_{L^{p}(G)}
\asymp \Vert f \Vert_{L^p(G)}, \end{equation} suggests the subelliptic functional class defined by the norm
\begin{equation}
  \Vert f \Vert_{F^{r,\mathcal{L}}_{p,q}(G)    }:= \Vert   [\sum_{s\in\mathbb{N}_0}  2^{srq}|\psi_{s}(D)f(x)       |^{q}]^{\frac{1}{q}}\Vert_{L^{p}(G)},
  \end{equation}
 for $0<p\leq \infty,$ and $0<q<\infty$ with the following modification
 \begin{equation}
  \Vert f \Vert_{F^{r,\mathcal{L}}_{p,\infty}(G)    }:= \Vert   \sup_{s\in\mathbb{N}_0}[  2^{sr}|\psi_{s}(D)f(x)       |]\Vert_{L^{p}(G)},
  \end{equation} for $q=\infty.$ By the analogy with ones presented in Triebel \cite{Triebel1983,Triebel2006} we will  refer to the spaces $F^{r,\mathcal{L}}_{p,q}(G),$ as subelliptic Triebel-Lizorkin spaces. The main objective of this section is to present the interpolation properties for these spaces and their relation with the subelliptic Besov spaces.
 In the context of compact Lie groups, Triebel-Lizorkin spaces $F^r_{p,q}(G)$ associated to the Laplacian $\mathcal{L}_G$ have been introduced in \cite{NurRuzTikhBesov2015,NurRuzTikhBesov2017}, by using a global description of the Fourier transform through the representation theory of the group $G$. Taking these references as a point of departure, we have the following theorem.
 \begin{theorem}
 Let $G$ be a compact Lie group of dimension $n.$ Then we have the following properties.
\begin{itemize}
    \item[(1)] $F^{r+\varepsilon,\mathcal{L} }_{p,q_1}(G)\hookrightarrow F^{r,\mathcal{L} }_{p,q_1}(G) \hookrightarrow F^{r,\mathcal{L} }_{p,q_2}(G) \hookrightarrow F^{r,\mathcal{L} }_{p,\infty}(G),$ $\varepsilon >0,$ $0\leq p\leq \infty,$ $0\leq q_1\leq q_2\leq \infty.$
    \item[(2)] $F^{r+\varepsilon,\mathcal{L} }_{p,q_1}(G) \hookrightarrow F^{r,\mathcal{L} }_{p,q_2}(G), $ $\varepsilon >0,$ $0\leq p\leq \infty,$ $1\leq q_2< q_1< \infty.$
    \item[(3)] $F^{r,\mathcal{L}}_{p,p}(G)=B^{r,\mathcal{L}}_{p,p}(G),$ $r\in \mathbb{R},$ $0<p\leq \infty.$
    \item[(4)] $B^{r,\mathcal{L}}_{p,\min\{p,q\}}(G)\hookrightarrow F^{r,\mathcal{L}}_{p,q}(G)\hookrightarrow B^{r,\mathcal{L}}_{p,\max\{p,q\}}(G) ,$ $0<p<\infty,$ $0<q\leq \infty.$
\end{itemize}
 \end{theorem}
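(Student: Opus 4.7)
The plan is to reduce every item to an elementary manipulation at the sequence-space level; no new input from the functional calculus or pseudo-differential theory is needed here, only the definitions of the spaces and a handful of standard real-variable inequalities.

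First I would dispose of (1) and (2), both of which are pointwise consequences of standard $\ell^{q}$ inequalities applied to the nonnegative sequence $b_{s}(x):=2^{sr}|\psi_{s}(D)f(x)|$, followed by taking the $L^{p}(G)$-norm in $x$. The monotonicity $\|\{b_{s}\}\|_{\ell^{q_{2}}}\leq\|\{b_{s}\}\|_{\ell^{q_{1}}}$ for $q_{1}\leq q_{2}\leq\infty$ immediately produces the three embeddings of (1); the additional shift by $\varepsilon$ uses the trivial $2^{sr}\leq 2^{s(r+\varepsilon)}$ for $s\in\mathbb{N}_{0}$. For (2), I would replicate the H\"older trick used in the proof of Theorem \ref{Emb1}(2), writing $2^{sr}=2^{s(r+\varepsilon)}\cdot 2^{-s\varepsilon}$ and applying the exponent pair $(q_{1}/q_{2},q_{1}/(q_{1}-q_{2}))$ at the sequence level before integrating in $x$.

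Item (3) is a direct application of Fubini's theorem. For $p=q<\infty$, exchanging sum and integral yields
\begin{equation*}
\|f\|_{F^{r,\mathcal{L}}_{p,p}}^{p}=\int_{G}\sum_{s\in\mathbb{N}_{0}}2^{srp}|\psi_{s}(D)f(x)|^{p}\,dx=\sum_{s\in\mathbb{N}_{0}}2^{srp}\|\psi_{s}(D)f\|_{L^{p}}^{p}=\|f\|_{B^{r,\mathcal{L}}_{p,p}}^{p},
\end{equation*}
and the case $p=q=\infty$ follows because both norms collapse to $\sup_{s}2^{sr}\|\psi_{s}(D)f\|_{L^{\infty}}$.

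Part (4) is the substantive step; the working tool is Minkowski's integral inequality, in essentially the form already used in the proof of Theorem \ref{Emb1}(4). I would split according to the sign of $p-q$. In the case $p>q$, so that $\min\{p,q\}=q$ and $p/q\geq 1$, applying Minkowski to the outer $L^{p/q}$-norm of the sum $\sum_{s}2^{srq}|\psi_{s}(D)f(\cdot)|^{q}$ yields $\|f\|_{F^{r,\mathcal{L}}_{p,q}}^{q}\leq\sum_{s}2^{srq}\|\psi_{s}(D)f\|_{L^{p}}^{q}$, which is exactly the left embedding of (4); for the right embedding here $\max\{p,q\}=p$, and I would chain $F^{r,\mathcal{L}}_{p,q}\hookrightarrow F^{r,\mathcal{L}}_{p,p}=B^{r,\mathcal{L}}_{p,p}$ via (1) and (3). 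The case $p\leq q$ is symmetric: Minkowski applied to the inner $\ell^{q/p}$-norm (exponent $\geq 1$) gives $F^{r,\mathcal{L}}_{p,q}\hookrightarrow B^{r,\mathcal{L}}_{p,q}$, and the left embedding $B^{r,\mathcal{L}}_{p,p}\hookrightarrow F^{r,\mathcal{L}}_{p,q}$ follows from (3) together with (1). The only mild obstacle I anticipate is the endpoint $q=\infty$ in (4), which I would treat separately by dominating the pointwise supremum by the appropriate $\ell^{p}$-norm and running the same argument, using the conventions $\max\{p,\infty\}=\infty$ and $\min\{p,\infty\}=p$.
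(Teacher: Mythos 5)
Your proposal is correct, and it follows essentially the approach the paper intends: the paper itself gives no written proof, stating only that the result is ``an adaptation of the arguments presented in Triebel,'' and your sequence-space manipulations (monotonicity of $\ell^q$-norms, the H\"older trick with the convergent geometric factor $2^{-s\varepsilon}$, Fubini for $p=q$, and the two applications of Minkowski's inequality with exponents $p/q\geq 1$ resp.\ $q/p\geq 1$ according to the sign of $p-q$) are exactly that standard adaptation, carried out in full. The only cosmetic point is at the endpoint $q=\infty$ of (4), where the right-hand embedding $F^{r,\mathcal{L}}_{p,\infty}(G)\hookrightarrow B^{r,\mathcal{L}}_{p,\infty}(G)$ needs nothing more than the trivial pointwise bound $2^{sr}|\psi_s(D)f(x)|\leq \sup_{s}2^{sr}|\psi_s(D)f(x)|$ rather than a comparison with an $\ell^p$-norm.
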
 The proof is only an adaptation of the arguments presented in Triebel \cite{Triebel1983}. To clarify the relation between subelliptic Besov spaces and subelliptic Triebel-Lizorkin spaces we present their interpolation properties. We recall that if $X_0$ and $X_1$ are Banach spaces, intermediate spaces between $X_0$ and $X_1$ can be defined with the $K$-functional, which is defined by
\begin{equation}
K(f,t)=\inf\{\Vert f_{0}\Vert_{X_0}+t\Vert f_1 \Vert_{X_1}:f=f_0+f_1, f_0\in X_0,f_1\in X_1\},\,\,t\geq 0.
\end{equation}
If $0<\theta<1$ and $1\leq q<\infty,$ the real interpolation space $X_{\theta,q}:=(X_0,X_1)_{(\theta,q)}$ is defined by those vectors $f\in X_0+X_1$ satisfying 
\begin{equation}
\Vert f \Vert_{\theta,q}=\left(  \int_{0}^{\infty}  ( t^{-\theta}K(f,t))^{q}\frac{dt}{t}   \right)^{\frac{1}{q}}<\infty\,\, \text{if} \,\,q<\infty,
\end{equation}
and for $q=\infty$
\begin{equation}
\Vert f \Vert_{\theta,q} =\sup_{t>0}  t^{-\theta}K(f,t)<\infty.
\end{equation} 
Let us observe that if in Theorem 8.1 of \cite{NurRuzTikhBesov2017} we consider the case of any sub-Laplacian $\mathcal{L}$ instead of the Laplacian $\mathcal{L}_G,$  a similar result can be obtained with a proof by following   the lines of the proof presented there. In our case, we have the following interpolation theorem.
\begin{theorem}
Let $G$ be a compact Lie groups of dimension $n.$ If $0<r_0<r_1<\infty,$ $0<\beta_0,\beta_1,q\leq \infty$ and $r=r_{0}(1-\theta)+
r_{1}\theta,$ then
\begin{itemize}
    \item[(1)] $(B^{r_0,\mathcal{L}}_{p,\beta_0}(G),B^{r_1,\mathcal{L}}_{p,\beta_1}(G))_{(\theta,q)}=B^{r,\mathcal{L}}_{p,q}(G),$ $0<p<\infty.$
    \item[(2)] $(L^{r_0,\mathcal{L}}_{p}(G),L^{r_1,\mathcal{L}}_{p}(G))_{(\theta,q)}=B^{r,\mathcal{L}}_{p,q}(G),$ $1<p<\infty.$
    \item[(3)] $(F^{r_0,\mathcal{L}}_{p,\beta_0}(G),F^{r_1,\mathcal{L}}_{p,\beta_1}(G))_{(\theta,q)}=B^{r,\mathcal{L}}_{p,q}(G),$ $0<p<\infty.$
\end{itemize}
\end{theorem}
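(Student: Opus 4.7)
\emph{Proof plan.} All three identities follow from a single retraction--coretraction scheme combined with the classical real interpolation identities for weighted sequence-valued Lebesgue spaces. I would begin by fixing an enlarged dyadic partition $\tilde\psi_\ell := \sum_{|k-\ell|\le j_0}\psi_k$, chosen (as in the proof of Theorem~\ref{SubBesovvsEllipBesov}) so that $\tilde\psi_\ell\psi_\ell=\psi_\ell$, and introducing the operators
\[
\mathcal{R}f := (\psi_\ell(D)f)_{\ell\in\mathbb{N}_0},\qquad
\mathcal{S}\bigl((f_\ell)_{\ell}\bigr) := \sum_{\ell=0}^\infty \tilde\psi_\ell(D) f_\ell,
\]
for which $\mathcal{S}\circ\mathcal{R}=\mathrm{Id}$. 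On the Besov scale, $\mathcal{R}$ maps $B^{s,\mathcal{L}}_{p,\beta}(G)$ into the weighted sequence space $\ell^\beta_{2^{s\ell}}(L^p(G))$ trivially, and $\mathcal{S}$ maps back by splitting into the $2j_0+1$ diagonal shifts and applying the uniform $L^p$-bound of Lemma~\ref{DelgadoRuzhanskyModifiedLemma} to each. On the Triebel--Lizorkin scale one gets the analogous retraction/coretraction pair with $L^p(G;\ell^\beta_{2^{s\ell}})$ in place of $\ell^\beta_{2^{s\ell}}(L^p(G))$, the boundedness of $\mathcal{S}$ there being a vector-valued $\ell^\beta$-extension of the same lemma.

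Given these retractions, the standard real interpolation principle for retracts (see Triebel~\cite{Triebel1983}) reduces parts (1) and (3) to the two sequence-level identities
\begin{equation*}
\bigl(\ell^{\beta_0}_{2^{r_0\ell}}(L^p(G)),\,\ell^{\beta_1}_{2^{r_1\ell}}(L^p(G))\bigr)_{(\theta,q)} = \ell^q_{2^{r\ell}}(L^p(G)),
\end{equation*}
\begin{equation*}
\bigl(L^p(G;\ell^{\beta_0}_{2^{r_0\ell}}),\,L^p(G;\ell^{\beta_1}_{2^{r_1\ell}})\bigr)_{(\theta,q)} = \ell^q_{2^{r\ell}}(L^p(G)),
\end{equation*}
with $r=(1-\theta)r_0+\theta r_1$. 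Both are classical: the first is the usual interpolation of weighted $\ell^\beta(L^p)$-spaces, and the second collapses to a Besov rather than Triebel--Lizorkin target precisely because the strict separation $r_0\ne r_1$ of the weights makes the inner $\ell^\beta$-norms decouple under the $K$-functional (see Triebel~\cite{Triebel1983}). Part (2) I would deduce from (3) by identifying $L^{p,\mathcal{L}}_r(G)=F^{r,\mathcal{L}}_{p,2}(G)$ for $1<p<\infty$, an identification obtained by applying the subelliptic Littlewood--Paley equivalence~\eqref{LitPaleyTh} to $(1+\mathcal{L})^{r/2}f$ and observing that on each spectral block of $(1+\mathcal{L})^{1/2}$ the operator $(1+\mathcal{L})^{r/2}$ is comparable to $2^{r\ell}$ via the functional calculus. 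The special case $\beta_0=\beta_1=2$ of (3) then yields (2).

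The main technical obstacle I anticipate is the boundedness of $\mathcal{S}$ on the Triebel--Lizorkin scale, which requires the vector-valued extension of Lemma~\ref{DelgadoRuzhanskyModifiedLemma}: the uniform scalar bound $\sup_\ell\|\tilde\psi_\ell(D)\|_{\mathscr{B}(L^p(G))}<\infty$ must be upgraded to uniform boundedness of the diagonal operator $(f_\ell)\mapsto(\tilde\psi_\ell(D)f_\ell)$ on $L^p(G;\ell^\beta)$. In the Euclidean setting and for compact homogeneous manifolds this is standard (cf.~\cite{NurRuzTikhBesov2017}); in our subelliptic framework it should follow by the same Marcinkiewicz-type vector-valued argument once the scalar lemma is in hand, although a careful verification of this step is where most of the work concentrates. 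The remaining verifications---the $\mathcal{S}\mathcal{R}=\mathrm{Id}$ relation, the trivial boundedness of $\mathcal{R}$, and the sequence-space interpolation identities---are routine.
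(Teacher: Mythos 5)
Your retraction--coretraction reduction to the classical sequence-space interpolation identities is the standard route and is essentially the paper's approach: the paper offers no argument of its own here, asserting only that the proof of Theorem 8.1 of \cite{NurRuzTikhBesov2017} carries over with $\mathcal{L}_G$ replaced by $\mathcal{L}$, and that proof is precisely the retract scheme you describe. The only caveat is the one you already flag yourself: since the $\psi_\ell$ here are sharp spectral cut-offs, $\mathcal{S}\circ\mathcal{R}=\mathrm{Id}$ is immediate, but the uniform $L^p(G;\ell^\beta)$-boundedness of $(f_\ell)_\ell\mapsto(\tilde\psi_\ell(D)f_\ell)_\ell$ for the sub-Laplacian (and the meaning of any of this for $p\le 1$, which the statement nominally allows) is where all the subelliptic content sits, and it is supplied neither by the paper nor, in this generality, by the cited reference.
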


\section{Boundedness of pseudo-differential operators associated to H\"ormander classes}\label{boundedness}

In this section we study the action of pseudo-differential operators on subelliptic Sobolev and Besov spaces. Troughout this section, $X=\{X_1,\cdots,X_k\}$ is a system of vector fields satisfying the H\"ormander condition of order $\kappa$ (this means that their iterated commutators of length $\leq \kappa$ span the Lie algebra $\mathfrak{g}$ of $G$) and $\mathcal{L}\equiv \mathcal{L}_{\textnormal{sub}}=-\sum_{j=1}^k X_j^2$ is the associated positive sub-Laplacian.
\subsection{Boundedness properties on subelliptic Sobolev spaces}
In this subsection we consider the problem of the boundedness of pseudo-differential operators on subelliptic Sobolev spaces. Our analysis will be based on Theorem \ref{DelgadoRuzhanskyLppseudo} and Remark \ref{orderforpowers}. 
\begin{theorem}\label{SubellipticLp1}
Let us assume that $\sigma\in \mathscr{S}^{-\nu}_{\rho,\delta}(G)$ and $0\leq \delta<\rho\leq 1.$  Then $A\equiv \sigma(x,D)$ extends to a bounded operator from $L^{p,\mathcal{L}}_{\vartheta}(G)$ to $L^p(G)$ provided that
\begin{equation}
    n(1-\rho)|\frac{1}{p}-\frac{1}{2}|-\frac{\vartheta}{\kappa}\leq  \nu,
\end{equation} for $0< \rho<1,$ and $\nu=-\frac{\vartheta}{\kappa},$ for $\rho=1.$ In particular, if $\sigma\in \mathscr{S}^{0}_{\rho,\delta}(G),$ the operator $A\equiv \sigma(x,D)$ extends to a bounded operator from $L^{p,\mathcal{L}}_{\vartheta}(G)$ to $L^p(G)$ with
\begin{equation}
    { n\kappa(1-\rho)}|\frac{1}{p}-\frac{1}{2}|\leq \vartheta ,
\end{equation} and $\vartheta=0,$ for $\rho=1.$
\end{theorem}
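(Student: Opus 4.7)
The plan is to reduce the boundedness of $A:L^{p,\mathcal{L}}_{\vartheta}(G)\to L^p(G)$ to an $L^p$-boundedness question for a composed pseudo-differential operator, and then invoke Theorem \ref{DelgadoRuzhanskyLppseudo}. By the very definition of the subelliptic Sobolev norm $\|f\|_{L^{p,\mathcal{L}}_\vartheta}=\|(1+\mathcal{L})^{\vartheta/2}f\|_{L^p}$, setting $g:=(1+\mathcal{L})^{\vartheta/2}f$ one has $\|Af\|_{L^p}=\|Bg\|_{L^p}$ and $\|f\|_{L^{p,\mathcal{L}}_\vartheta}=\|g\|_{L^p}$, so the desired bound is equivalent to the $L^p(G)$-boundedness of
$$B:=A\circ (1+\mathcal{L})^{-\vartheta/2}.$$

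Next I would compute the matrix-valued symbol of $B$. Since $(1+\mathcal{L})^{-\vartheta/2}$ is left-invariant, its quantization is a pure Fourier multiplier, so the composition formula reduces to a pointwise matrix product:
$$\sigma_B(x,\xi)=\sigma_A(x,\xi)\,\widehat{(1+\mathcal{L})^{-\vartheta/2}}(\xi).$$
By Remark \ref{orderforpowers}, $(1+\mathcal{L})^{-\vartheta/2}\in \textnormal{Op}(\mathscr{S}^{-\vartheta/\kappa}_{1/\kappa,0}(G))$, so in particular $\|\Delta^\gamma\widehat{(1+\mathcal{L})^{-\vartheta/2}}(\xi)\|_{op}\lesssim\langle\xi\rangle^{-\vartheta/\kappa-|\gamma|/\kappa}$. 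A Leibniz-type splitting of $\partial_x^\beta\Delta^\gamma$ applied to the matrix product yields a sum over $\gamma_1+\gamma_2=\gamma$ of terms bounded by a constant times $\langle\xi\rangle^{-\nu-\vartheta/\kappa-\rho|\gamma_1|-|\gamma_2|/\kappa+\delta|\beta|}$, which (after absorbing the better-decaying $\kappa^{-1}$-factors into the $\rho$-regime governed by $\sigma_A$) puts $\sigma_B$ in the class $\mathscr{S}^{-(\nu+\vartheta/\kappa)}_{\rho,\delta}(G)$.

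With $\sigma_B$ in this class, Theorem \ref{DelgadoRuzhanskyLppseudo} yields the $L^p$-boundedness of $B$ whenever the order of $\sigma_B$ satisfies
$$\nu+\tfrac{\vartheta}{\kappa}\ge n(1-\rho)\Big|\tfrac{1}{p}-\tfrac{1}{2}\Big|\quad(0<\rho<1),$$
which is precisely the hypothesis of the statement after rearrangement. The borderline $\rho=1$ follows likewise from the $\rho=1$ case of Theorem \ref{DelgadoRuzhanskyLppseudo}, since the choice $\nu=-\vartheta/\kappa$ makes $\sigma_B$ of order zero. The ``in particular'' assertion for $\sigma\in\mathscr{S}^{0}_{\rho,\delta}(G)$ is simply the specialization $\nu=0$; the inequality $\vartheta/\kappa\ge n(1-\rho)|1/p-1/2|$ is the same as the stated $n\kappa(1-\rho)|1/p-1/2|\le\vartheta$.

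The main obstacle is the symbol-class analysis of the composed operator $B$ in the second step: the difference operators $\Delta^\gamma$ on $\widehat{G}$ obey only an asymptotic Leibniz rule, so the verification that $\sigma_B\in \mathscr{S}^{-(\nu+\vartheta/\kappa)}_{\rho,\delta}(G)$ requires a careful bookkeeping of how each $\Delta$ splits between the two factors and of the lower-order cross terms; once this is in hand, the remainder is a direct invocation of Theorem \ref{DelgadoRuzhanskyLppseudo}.
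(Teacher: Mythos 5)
Your proposal is essentially the paper's own proof: the paper likewise sets $T:=A\,(1+\mathcal{L})^{-\vartheta/2}$, uses Remark \ref{orderforpowers} to place $(1+\mathcal{L})^{-\vartheta/2}$ in $\textnormal{Op}(\mathscr{S}^{-\vartheta/\kappa}_{1/\kappa,0}(G))$, applies Theorem \ref{DelgadoRuzhanskyLppseudo} to the composition, and then substitutes $g=(1+\mathcal{L})^{\vartheta/2}f$. The only (shared) delicate point is the class of the composition: you claim $\sigma_B\in\mathscr{S}^{-(\nu+\vartheta/\kappa)}_{\rho,\delta}(G)$, but the terms of the Leibniz splitting in which all differences fall on the Bessel-potential factor only decay like $\langle\xi\rangle^{-|\gamma|/\kappa}$, so in general one only gets the class with $\min(\rho,1/\kappa)$ in place of $\rho$ (the paper records $T\in\Psi^{-\vartheta/\kappa-\nu}_{1/\kappa,\delta}$ and nevertheless applies the theorem with the exponent $n(1-\rho)$); this bookkeeping issue is present in both arguments and is not something your proposal introduces.
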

\begin{proof}
Let us define the operator $T:=A\mathcal{M}_{-\vartheta}$ where $\mathcal{M}=(1+\mathcal{L})^\frac{1}{2}$ and $\mathcal{M}_{-\vartheta}=(1+\mathcal{L})^{-\frac{\vartheta}{2}}.$ From Remark   \ref{orderforpowers},  $\mathcal{M}_{-\vartheta}\in \Psi^{-\vartheta/\kappa}_{1/\kappa,0} $ and consequently $T\in \Psi^{-\vartheta/\kappa-\nu}_{1/\kappa,\delta} .$ Theorem \ref{DelgadoRuzhanskyLppseudo} implies that $T$ extends to a bounded operator on $L^p(G)$ provided that
\begin{equation}
    n(1-\rho)|\frac{1}{p}-\frac{1}{2}| \leq \nu+\frac{\vartheta}{\kappa}.
\end{equation} In this case, there exists $C>0$ independent of $g\in L^p(G),$ satisfying
\begin{equation}\label{SubellipticEstimate1}
    \Vert Tg\Vert_{L^p(G)}=\Vert A\mathcal{M}_{-\vartheta} g\Vert_{L^p(G)}\leq C\Vert g\Vert_{L^p(G)}.
\end{equation} In particular, if we replace $g$ in \eqref{SubellipticEstimate1} by $\mathcal{M}_{\vartheta} f,$ $f\in C^\infty(G),$ we obtain
\begin{equation}
    \Vert A f\Vert_{L^p(G)}\leq C\Vert \mathcal{M}_{\vartheta} f\Vert_{L^p(G)}=C\Vert  f\Vert_{L_{\vartheta}^{p,\mathcal{L}}(G)}.
\end{equation} Thus we end the proof.
\end{proof}
\begin{remark}\label{thecaseoflaplacian}
Let us observe that if in Theorem \ref{SubellipticLp1} we consider the Laplacian $\mathcal{L}_G$ instead any the sub-Laplacian $\mathcal{L}$ and we set $\kappa=1,$ a similar result can be obtained with a proof by following   the lines of the proof presented above for Theorem \ref{SubellipticLp1}. We present the corresponding result of the following way.
\end{remark} 
\begin{corollary}\label{SubellipticLp1cor}
Let us assume that $\sigma\in \mathscr{S}^{-\nu}_{\rho,\delta}(G)$ and $0\leq \delta<\rho\leq 1.$  Then $A\equiv \sigma(x,D)$ extends to a bounded operator from $L^{p}_{\vartheta}(G)$ to $L^p(G)$ provided that
\begin{equation}
    n(1-\rho)|\frac{1}{p}-\frac{1}{2}|-{\vartheta}\leq  \nu,
\end{equation} 
for $0<\rho<1,$ and $\nu=-\vartheta,$ for $\rho=1.$
In particular, if $\sigma\in \mathscr{S}^{0}_{\rho,\delta}(G),$ the operator $A\equiv \sigma(x,D)$ extends to a bounded operator from $L^{p}_{\vartheta}(G)$ to $L^p(G)$ with
\begin{equation}
    { n(1-\rho)}|\frac{1}{p}-\frac{1}{2}|\leq \vartheta,
\end{equation} for $0<\rho<1,$ and $\vartheta=0,$ for $\rho=1.$
\end{corollary}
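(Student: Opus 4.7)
The plan is to reduce Corollary \ref{SubellipticLp1cor} to Theorem \ref{DelgadoRuzhanskyLppseudo} exactly as Theorem \ref{SubellipticLp1} does, but composing with the Bessel potential $(1+\mathcal{L}_G)^{-\vartheta/2}$ associated to the Laplacian instead of the subelliptic Bessel potential. This is the content of Remark \ref{thecaseoflaplacian}: the argument is the same except that $\mathcal{L}$ is replaced by $\mathcal{L}_G$ and $\kappa$ is replaced by $1$, which corresponds to the fact that $\mathcal{L}_G$ is elliptic.

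First I would introduce the auxiliary operator $T := A\,(1+\mathcal{L}_G)^{-\vartheta/2}$. Since $\mathcal{L}_G$ is an elliptic operator with global symbol $\lambda_{[\xi]}I_{d_\xi}$, the matrix-valued functional calculus (analogous to the computation in Remark \ref{orderforpowers}, now with $\kappa=1$ in view of the lower bound $\langle\xi\rangle\lesssim (1+\lambda_{[\xi]})^{1/2}$) shows that $(1+\mathcal{L}_G)^{-\vartheta/2}\in\textnormal{Op}(\mathscr{S}^{-\vartheta}_{1,0}(G))$. Since $\sigma\in\mathscr{S}^{-\nu}_{\rho,\delta}(G)$ with $\delta<\rho\leq 1$ and the second factor lies in $\mathscr{S}^{-\vartheta}_{1,0}\subset\mathscr{S}^{-\vartheta}_{\rho,0}$, the composition rules for the matrix-valued H\"ormander classes on compact Lie groups (cf. \cite{Ruz}) yield
\begin{equation*}
T\in\textnormal{Op}(\mathscr{S}^{-\vartheta-\nu}_{\rho,\delta}(G)).
\end{equation*}

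Next I would apply Theorem \ref{DelgadoRuzhanskyLppseudo} directly to $T$. For $0<\rho<1$, that theorem asserts $L^p$-boundedness of $T$ provided the order $\vartheta+\nu$ satisfies $n(1-\rho)|\tfrac{1}{p}-\tfrac{1}{2}|\leq \vartheta+\nu$, which is precisely the hypothesis $n(1-\rho)|\tfrac{1}{p}-\tfrac{1}{2}|-\vartheta\leq\nu$ of the corollary. For $\rho=1$, Theorem \ref{DelgadoRuzhanskyLppseudo} requires the order to be zero, namely $\vartheta+\nu=0$, which is exactly the condition $\nu=-\vartheta$. In either case there is a constant $C>0$ independent of $g\in L^p(G)$ such that
\begin{equation*}
\Vert A\,(1+\mathcal{L}_G)^{-\vartheta/2} g\Vert_{L^p(G)}\leq C\Vert g\Vert_{L^p(G)}.
\end{equation*}

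Finally, substituting $g=(1+\mathcal{L}_G)^{\vartheta/2}f$ for $f\in C^\infty(G)$ gives
\begin{equation*}
\Vert Af\Vert_{L^p(G)}\leq C\Vert (1+\mathcal{L}_G)^{\vartheta/2}f\Vert_{L^p(G)}=C\Vert f\Vert_{L^{p}_{\vartheta}(G)},
\end{equation*}
which is the desired boundedness from $L^{p}_{\vartheta}(G)$ into $L^p(G)$. There is no real obstacle here since both ingredients (the symbolic membership of the elliptic Bessel potential and the pseudo-differential $L^p$-theorem) are stated in the excerpt; the only care needed is to verify that the composition preserves the $(\rho,\delta)$-class of $\sigma$, which is immediate because the Bessel potential factor belongs to the strictly smaller class $\mathscr{S}^{-\vartheta}_{1,0}\subset \mathscr{S}^{-\vartheta}_{\rho,\delta}$.
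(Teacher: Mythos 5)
Your proof is correct and is essentially the paper's own argument: the paper proves Theorem \ref{SubellipticLp1} by factoring through the subelliptic Bessel potential $(1+\mathcal{L})^{-\vartheta/2}$ and then, in Remark \ref{thecaseoflaplacian}, obtains the corollary by repeating that proof with $\mathcal{L}_G$ in place of $\mathcal{L}$ and $\kappa=1$, exactly as you do with $T=A(1+\mathcal{L}_G)^{-\vartheta/2}$ and the substitution $g=(1+\mathcal{L}_G)^{\vartheta/2}f$. The only caveat, inherited equally by the paper's own argument, is that Theorem \ref{DelgadoRuzhanskyLppseudo} as stated also requires the upper bound $\vartheta+\nu<\tfrac{n(1-\rho)}{2}$ for $0<\rho<1$, a condition silently omitted from the corollary's hypotheses.
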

We observe that Corollary \ref{SubellipticLp1cor} is only another formulation of Theorem \ref{DelgadoRuzhanskyLppseudo}.

\subsection{Boundedness properties on subelliptic Besov spaces}   
 Let us choose a $S^1_0$-partition of unity (or Littlewood-Paley partition of the unity) $\{\tilde{\psi}_{j}\}.$ So, we assume the following properties (see e.g.  Definition 2.1 of Garetto \cite{Garetto}),
\begin{itemize}
    \item  $\tilde\psi_0 \in \mathscr{D}(-\infty,\infty),$ $\tilde\psi_0(t)=1$ for $|t|\leq 1,$ and  $\tilde\psi_0(t)=0$ for  $|t|\geq 2.$
    \item For every $j\geq 1, $ $\tilde\psi_{j}(t)=\tilde\psi_0(2^{-j}t)-\tilde\psi_{0}(2^{-j+1}t):=\tilde\psi_1(2^{-j+1}t),$
    \item $|\tilde{\psi}^{(\alpha)}_j(t)|\leq C_{\alpha}2^{-\alpha j},\,\,\alpha\in\mathbb{N}_0,$ and
\item $ \sum_{j} \tilde{\psi}_{j}(t)=1, $ $t\in\mathbb{R}.$
\end{itemize}
If $\sigma\in \mathscr{S}^{-\nu}_{\rho,\delta}(G),$ $0\leq \delta<\rho\leq 1,$
in order to study the subelliptic Besov boundedness of the pseudo-differential operator $A\equiv \sigma(x,D)$, for every $\tilde\ell\in\mathbb{N},$ and $f\in C^\infty(G)$ we will decompose $\psi_{\tilde\ell}((1+\mathcal{L}_G)^{\frac{1}{2}})Af$ as
\begin{equation}
    \psi_{\tilde\ell}((1+\mathcal{L}_G)^{\frac{1}{2}})Af=\sum_{j,j'=0}^\infty \psi_{\tilde\ell}((1+\mathcal{L}_G)^{\frac{1}{2}})\tilde{\psi}_{j'}((1+\mathcal{L}_G)^{\frac{1}{2}})A\psi_j(D)f=\sum_{j,j'=0}^\infty T_{\tilde\ell,j,j'}f,
\end{equation} with $T_{\tilde\ell,j,j'}:=\psi_{\tilde\ell}((1+\mathcal{L}_G)^{\frac{1}{2}})\tilde{\psi}_{j'}((1+\mathcal{L}_G)^{\frac{1}{2}})A\psi_j(D) .$ By observing that the matrix symbol $\sigma_{T_{\tilde\ell,j,j'}}$ of every operator $T_{\tilde\ell,j,j'}$
satisfies the identity
\begin{align*}
    \sigma_{T_{\tilde\ell,j,j'}}(x,\xi)=\psi_{\tilde\ell}(\langle \xi\rangle)\tilde{\psi}_{j'}(\langle \xi\rangle) & \sigma_{  A\psi_j(D)}(x,\xi)\\=&\sigma_{  \psi_{\tilde\ell}((1+\mathcal{L}_G)^\frac{1}{2})\tilde{\psi}_{j'}((1+\mathcal{L}_G)^\frac{1}{2})A}(x,\xi)\psi_{j}( \xi),
\end{align*}
for all $x\in G$ and $[\xi]\in \widehat{G},$ where    $\sigma_{  A\psi_j(D)}(x,\xi)$ and $\sigma_{ \psi_{\tilde\ell}((1+\mathcal{L}_G)^\frac{1}{2})\tilde{\psi}_{j'}((1+\mathcal{L}_G)^\frac{1}{2})A }(x,\xi)$ are the corresponding symbols to the operators $ A\psi_j(D)$ and $  \psi_{\tilde\ell}((1+\mathcal{L}_G)^\frac{1}{2})\tilde{\psi}_{j'}((1+\mathcal{L}_G)^\frac{1}{2})A,$ we deduce that
\begin{equation}
    \textnormal{supp}(\sigma_{T_{\tilde\ell,j,j'}}(x,\xi))\subset \textnormal{supp}(\psi_{\tilde \ell}(\langle\xi\rangle)I_{d_\xi})\cap \textnormal{supp}(\psi_{j}(\xi))\cap \textnormal{supp} (\tilde{\psi}_{j'}(\langle\xi\rangle)I_{d_\xi}), 
\end{equation} for all $x\in G,$ and  $ [\xi]\in \widehat{G}.$ From the existence of  an integer $j_0\in\mathbb{N}$ satisfying $\tilde{\psi}_{j}\psi_{j'}=0$ for all $j,j'$ with $|j-j'|\geq j_0,$ we have the estimate
\begin{equation}
    \Vert \psi_{\tilde\ell}((1+\mathcal{L}_G)^\frac{1}{2})Af  \Vert_{L^p}\leq \sum_{j,j'=\tilde\ell-j_0}^{\tilde\ell+j_0}\Vert\psi_{\tilde\ell}((1+\mathcal{L}_G)^\frac{1}{2}) \tilde{\psi}_{j'}((1+\mathcal{L}_G)^\frac{1}{2}) A \psi_{j}(D)f \Vert_{L^p},
\end{equation}  for all  $\tilde\ell\in \mathbb{N}.$ From Lemma 4.11 of \cite{RuzhanskyDelgado2017}  applied to every operator  $  \psi_{\tilde\ell}((1+\mathcal{L}_G)^\frac{1}{2})\tilde{\psi}_{j'}((1+\mathcal{L}_G)^\frac{1}{2}),$ we have  $\Vert  \psi_{\tilde\ell}((1+\mathcal{L}_G)^\frac{1}{2}) \tilde{\psi}_{j'}((1+\mathcal{L}_G)^\frac{1}{2}) A \psi_{j}(D) \Vert_{L^p(G)}\leq C\Vert A \psi_{j}(D) f\Vert_{L^p(G)},$ where the positive constant $C>0$ is independent of $\tilde\ell,j,j'$ and $f.$ So, if additionally we use Theorem \ref{SubellipticLp1}, we can estimate the $B^{s}_{p,q}$-Besov norm of $Af$ as follows,
\begin{align*}
    \Vert Af\Vert_{ B^{s}_{p,q}(G) } &\asymp \left\Vert      \{ 2^{js}  \Vert  \psi_{\tilde\ell}((1+\mathcal{L}_G)^{\frac{1}{2}})A f\Vert_{L^{p}(G)} \}_{\tilde\ell\in\mathbb{N}}  \right\Vert_{\ell^q(\mathbb{N})}\\
&\leq  \left\Vert   \{   \sum_{j',j=\tilde\ell-j_0}^{\tilde\ell+j_0}2^{js}\Vert  \psi_{\tilde\ell}((1+\mathcal{L}_G)^{\frac{1}{2}})\tilde{\psi}_{j'}((1+\mathcal{L}_G)^{\frac{1}{2}})A\psi_{j}(D) f\Vert_{L^{p}(G)} \}_{\tilde\ell\in\mathbb{N}}\right\Vert_{\ell^q(\mathbb{N})}\\
&\asymp  \left\Vert   \{   \sum_{j',j=\tilde\ell-j_0}^{\tilde\ell+j_0}2^{js}\Vert  A\psi_{j}(D) f\Vert_{L^{p}(G)} \}_{\tilde\ell\in\mathbb{N}}\right\Vert_{\ell^q(\mathbb{N})}\\
&\lesssim  \left\Vert   \{   \sum_{j',j=\tilde\ell-j_0}^{\tilde\ell+j_0}2^{js}\Vert A \Vert_{\mathscr{B}(L^{p,\mathcal{L}}_\vartheta,L^p)}\Vert  \psi_{j}(D) f\Vert_{L_{\vartheta}^{p,\mathcal{L}}(G)} \}_{\tilde\ell\in\mathbb{N}}\right\Vert_{\ell^q(\mathbb{N})}\\
&\asymp \left\Vert   \{   \sum_{j',j=\tilde\ell-j_0}^{\tilde\ell+j_0}2^{j(s+\vartheta)}\Vert  \psi_{j}(D) f\Vert_{L^{p}(G)} \}_{\tilde\ell\in\mathbb{N}}\right\Vert_{\ell^q(\mathbb{N})}\\
&\lesssim (2j_0+1)^2\left\Vert   \{   2^{j(s+\vartheta)}\Vert  \psi_{j}(D) f\Vert_{L^{p}(G)} \}_{j\in\mathbb{N}}\right\Vert_{\ell^q(\mathbb{N})}
\asymp\Vert f\Vert_{ B^{s+\vartheta,\mathcal{L}}_{p,q}(G) },
\end{align*} provided that
\begin{equation}
    n(1-\rho)|\frac{1}{p}-\frac{1}{2}|-\frac{\vartheta}{\kappa}\leq  \nu<\frac{n(1-\rho)}{2}-\frac{\vartheta}{\kappa},
\end{equation}  for $0<\rho<1,$ and $\nu=-\frac{\vartheta}{\kappa},$ for $\rho=1.$   So, we summarise this analysis in the following result.
 \begin{theorem}\label{SubellipticBesov1}
Let us assume that $\sigma\in \mathscr{S}^{-\nu}_{\rho,\delta}(G)$ and $0\leq \delta<\rho\leq 1.$  Then $A\equiv \sigma(x,D)$ extends to a bounded operator from $B^{s+\vartheta,\mathcal{L}}_{p,q}(G)$   to $B^{s}_{p,q}(G)$ provided that
\begin{equation}
    n(1-\rho)|\frac{1}{p}-\frac{1}{2}|-\frac{\vartheta}{\kappa}\leq  \nu,
\end{equation} for $0<\rho<1,$ and $\nu=-\frac{\vartheta}{\kappa},$ for $\rho=1.$  In particular, if $\sigma\in \mathscr{S}^{0}_{\rho,\delta}(G),$ the operator $A\equiv \sigma(x,D)$ extends to a bounded operator from $B^{s+\vartheta,\mathcal{L}}_{p,q}(G)$   to $B^{s}_{p,q}(G)$  with
\begin{equation}
    { n\kappa(1-\rho)}|\frac{1}{p}-\frac{1}{2}|\leq \vartheta,
\end{equation} for $0<\rho<1,$ and $\vartheta=0,$ for $\rho=1.$
\end{theorem}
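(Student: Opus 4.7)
The plan is to combine the $L^p$-Sobolev boundedness from Theorem \ref{SubellipticLp1} with a double Littlewood--Paley decomposition: the subelliptic one $\{\psi_j(D)\}$ on the domain side, and an elliptic one $\{\tilde\psi_{j'}((1+\mathcal{L}_G)^{1/2})\}$ on the target side, spliced together via an inserted partition of unity. This is the same blueprint that was used in the proof of Theorem \ref{SubBesovvsEllipBesov}, with the pseudo-differential operator $A$ inserted between the two resolutions of the identity.

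First, I would fix the $S^1_0$-partition $\{\tilde\psi_j\}$ described in the excerpt and, for each elliptic Littlewood--Paley block $\psi_{\tilde\ell}((1+\mathcal{L}_G)^{1/2})Af$, insert $1=\sum_{j'} \tilde\psi_{j'}((1+\mathcal{L}_G)^{1/2})$ to the left of $A$ and $1=\sum_j \psi_j(D)$ to its right. A direct symbol computation shows that the operator $T_{\tilde\ell,j,j'} := \psi_{\tilde\ell}((1+\mathcal{L}_G)^{1/2})\tilde\psi_{j'}((1+\mathcal{L}_G)^{1/2})A\psi_j(D)$ has matrix symbol supported in the intersection of the three spectral supports. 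Since $\tilde\psi_j\psi_{j'}=0$ for $|j-j'|\geq j_0$, only indices $j,j'\in[\tilde\ell-j_0,\tilde\ell+j_0]$ survive, collapsing the double sum to a uniformly bounded $O(j_0^2)$ number of terms.

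Next, I would estimate each surviving block in $L^p$. Lemma 4.11 of \cite{RuzhanskyDelgado2017} gives a uniform $L^p\to L^p$ bound on the compactly spectrally supported multipliers $\psi_{\tilde\ell}((1+\mathcal{L}_G)^{1/2})\tilde\psi_{j'}((1+\mathcal{L}_G)^{1/2})$, independent of $\tilde\ell,j'$. Theorem \ref{SubellipticLp1} gives $\|Ag\|_{L^p}\lesssim \|g\|_{L_\vartheta^{p,\mathcal{L}}}$ under the stated hypothesis relating $\nu,\vartheta,\rho,\kappa,p$. Applying this to $g=\psi_j(D)f$ and combining with the spectral-calculus equivalence
\begin{equation*}
\|\psi_j(D)f\|_{L_\vartheta^{p,\mathcal{L}}} \asymp 2^{j\vartheta}\|\psi_j(D)f\|_{L^p},
\end{equation*}
proved exactly as in \eqref{estimatebesovsobolev}, yields $\|T_{\tilde\ell,j,j'}f\|_{L^p}\lesssim 2^{j\vartheta}\|\psi_j(D)f\|_{L^p}$.

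Finally, I would take the $\ell^q$-norm of the $2^{\tilde\ell s}$-weighted sequence. The bounded overlap $|j-\tilde\ell|,|j'-\tilde\ell|\leq j_0$ permits the relabelling $2^{\tilde\ell s}\asymp 2^{js}$ up to a universal constant, and then the weight combines with $2^{j\vartheta}$ to recover exactly the subelliptic Besov norm at regularity $s+\vartheta$, giving
\begin{equation*}
\|Af\|_{B^{s}_{p,q}(G)}\lesssim (2j_0+1)^{2}\,\|f\|_{B^{s+\vartheta,\mathcal{L}}_{p,q}(G)}.
\end{equation*}
The main technical point is the triple spectral-support bookkeeping that reduces the sum to bounded overlap; verifying the $\vartheta$-shift in regularity is then an immediate consequence of the functional calculus, and the rest is a standard Littlewood--Paley repacking. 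The case $\rho=1$ is handled by the same argument using the $\rho=1$ branch of Theorem \ref{SubellipticLp1}.
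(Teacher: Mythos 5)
Your proposal follows the paper's own proof essentially verbatim: the same double dyadic decomposition with the inserted partition $\{\tilde\psi_{j'}\}$, the same operators $T_{\tilde\ell,j,j'}$ with the triple spectral-support bookkeeping, Lemma 4.11 of \cite{RuzhanskyDelgado2017} for the uniform $L^p$ bounds of the elliptic blocks, Theorem \ref{SubellipticLp1} applied to $\psi_j(D)f$, and the functional-calculus identity $\Vert\psi_j(D)f\Vert_{L^{p,\mathcal{L}}_{\vartheta}}\asymp 2^{j\vartheta}\Vert\psi_j(D)f\Vert_{L^p}$ before the final $\ell^q$ repacking with the $(2j_0+1)^2$ factor. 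Nothing in your argument departs from the paper's route.
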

 \begin{remark}
 Same as in Remark \ref{thecaseoflaplacian}, if we consider the Laplacian on $G,$ $\mathcal{L}_G,$ instead of any sub-Laplacian, and we put $\kappa=1,$ a similar argument to the used in the proof of Theorem \ref{SubellipticBesov1} leads us to prove a similar result. We record it as follows.
 \end{remark}
 \begin{theorem}\label{SubellipticBesov2}
Let us assume that $\sigma\in \mathscr{S}^{-\nu}_{\rho,\delta}(G)$ and  $0\leq \delta<\rho\leq 1.$  Then $A\equiv \sigma(x,D)$ extends to a bounded operator from $B^{s+\vartheta}_{p,q}(G)$   to $B^{s}_{p,q}(G)$ provided that
\begin{equation}
    n(1-\rho)|\frac{1}{p}-\frac{1}{2}|-{\vartheta}\leq  \nu,
\end{equation} for $0<\rho<1,$ and $\nu=-\vartheta,$ for $\rho=1.$ In particular, if $\sigma\in \mathscr{S}^{0}_{\rho,\delta}(G),$ the operator $A\equiv \sigma(x,D)$ extends to a bounded operator from $B^{s+\vartheta}_{p,q}(G)$ to $B^{s}_{p,q}(G)$ with
\begin{equation}
    { n(1-\rho)}|\frac{1}{p}-\frac{1}{2}|\leq \vartheta ,
\end{equation} for $0<\rho<1,$ and $\vartheta=0,$ for $\rho=1.$
\end{theorem}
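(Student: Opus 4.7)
The plan is to mirror the argument used for Theorem \ref{SubellipticBesov1}, but with both dyadic decompositions now taken with respect to the Laplacian $\mathcal{L}_G$ instead of mixing the two calculi, and invoking Corollary \ref{SubellipticLp1cor} in place of Theorem \ref{SubellipticLp1}. Concretely, choose the Littlewood--Paley partition of unity $\{\tilde\psi_j\}_{j\in\mathbb{N}_0}$ described immediately before Theorem \ref{SubellipticBesov1} and recall that there is an integer $j_0$ with $\tilde\psi_j\psi_{j'}=0$ whenever $|j-j'|\geq j_0$. For $\tilde\ell\in\mathbb{N}$ and $f\in C^\infty(G)$, write
\begin{equation*}
\psi_{\tilde\ell}((1+\mathcal{L}_G)^{\frac{1}{2}})Af=\sum_{j,j'=0}^{\infty}\psi_{\tilde\ell}((1+\mathcal{L}_G)^{\frac{1}{2}})\tilde\psi_{j'}((1+\mathcal{L}_G)^{\frac{1}{2}})A\psi_j((1+\mathcal{L}_G)^{\frac{1}{2}})f,
\end{equation*}
and observe that for each summand the matrix symbol is supported where both $\psi_{\tilde\ell}(\langle\xi\rangle)\tilde\psi_{j'}(\langle\xi\rangle)\neq 0$ (on the $\tilde\ell$-th side) and $\psi_j(\langle\xi\rangle)\neq 0$ (on the other side). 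The disjoint-support property forces the double sum to reduce to $j,j'\in\{\tilde\ell-j_0,\dots,\tilde\ell+j_0\}$.

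The next step is to apply Lemma 4.11 of \cite{RuzhanskyDelgado2017} to the Fourier multiplier $\psi_{\tilde\ell}((1+\mathcal{L}_G)^{\frac{1}{2}})\tilde\psi_{j'}((1+\mathcal{L}_G)^{\frac{1}{2}})$, whose symbol is supported in a dyadic shell and is uniformly in $\mathscr{S}^{0}_{1,0}(G)$ in $\tilde\ell,j'$; this gives a constant $C>0$ independent of $\tilde\ell,j'$ such that
\begin{equation*}
\Vert \psi_{\tilde\ell}((1+\mathcal{L}_G)^{\frac{1}{2}})\tilde\psi_{j'}((1+\mathcal{L}_G)^{\frac{1}{2}})g\Vert_{L^p(G)}\leq C\Vert g\Vert_{L^p(G)},
\end{equation*}
for all $1<p<\infty$. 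Combined with Corollary \ref{SubellipticLp1cor}, applied to the symbol $\sigma\in\mathscr{S}^{-\nu}_{\rho,\delta}(G)$ in the regime $n(1-\rho)|1/p-1/2|-\vartheta\leq\nu$ (respectively $\nu=-\vartheta$ if $\rho=1$), we obtain the uniform estimate
\begin{equation*}
\Vert A\psi_j((1+\mathcal{L}_G)^{\frac{1}{2}})f\Vert_{L^p(G)}\lesssim \Vert \psi_j((1+\mathcal{L}_G)^{\frac{1}{2}})f\Vert_{L^p_\vartheta(G)}\asymp 2^{j\vartheta}\Vert \psi_j((1+\mathcal{L}_G)^{\frac{1}{2}})f\Vert_{L^p(G)},
\end{equation*}
where in the last step one uses the same functional-calculus identity $\Vert \psi_j((1+\mathcal{L}_G)^{\frac{1}{2}})f\Vert_{L^p_\vartheta(G)}\asymp 2^{j\vartheta}\Vert \psi_j((1+\mathcal{L}_G)^{\frac{1}{2}})f\Vert_{L^p(G)}$ that was verified at the beginning of the proof of Theorem \ref{SubBesovvsEllipBesov} (with $\mathcal{L}$ replaced by $\mathcal{L}_G$).

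Putting the two estimates together and summing only over the $(2j_0+1)^2$ relevant pairs $(j,j')$ yields
\begin{equation*}
2^{\tilde\ell s}\Vert\psi_{\tilde\ell}((1+\mathcal{L}_G)^{\frac{1}{2}})Af\Vert_{L^p(G)}\lesssim \sum_{j=\tilde\ell-j_0}^{\tilde\ell+j_0}2^{j(s+\vartheta)}\Vert\psi_j((1+\mathcal{L}_G)^{\frac{1}{2}})f\Vert_{L^p(G)},
\end{equation*}
and taking the $\ell^q(\mathbb{N})$-norm in $\tilde\ell$ (using Young's convolution inequality, since the right-hand side is the convolution of $\{2^{j(s+\vartheta)}\Vert\psi_j((1+\mathcal{L}_G)^{\frac{1}{2}})f\Vert_{L^p}\}_j$ with a finitely supported sequence) gives $\Vert Af\Vert_{B^s_{p,q}(G)}\lesssim \Vert f\Vert_{B^{s+\vartheta}_{p,q}(G)}$, which is the desired inequality. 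The only delicate point—the main obstacle—is checking that the uniform-in-$j'$ boundedness from Lemma 4.11 of \cite{RuzhanskyDelgado2017} applies to the products $\psi_{\tilde\ell}((1+\mathcal{L}_G)^{\frac{1}{2}})\tilde\psi_{j'}((1+\mathcal{L}_G)^{\frac{1}{2}})$ with a constant depending on neither index; this is essentially the same verification done in the proof of Theorem \ref{SubellipticBesov1} and carries over directly since here only the Laplacian $\mathcal{L}_G$ enters.
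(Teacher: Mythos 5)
Your proposal is correct and follows essentially the same route the paper intends: the paper proves this theorem only by remarking that the argument of Theorem \ref{SubellipticBesov1} carries over verbatim upon replacing the sub-Laplacian by $\mathcal{L}_G$ and setting $\kappa=1$, which is precisely what you carry out, with the dyadic decomposition, the reduction to $|j-\tilde\ell|,|j'-\tilde\ell|\leq j_0$, the uniform $L^p$-bound from Lemma 4.11 of \cite{RuzhanskyDelgado2017}, and Corollary \ref{SubellipticLp1cor} in place of Theorem \ref{SubellipticLp1}. If anything, your version is slightly cleaner than the template, since with both partitions taken relative to $\mathcal{L}_G$ the disjoint-support reduction and the identity $\Vert \psi_j((1+\mathcal{L}_G)^{\frac{1}{2}})f\Vert_{L^p_\vartheta}\asymp 2^{j\vartheta}\Vert \psi_j((1+\mathcal{L}_G)^{\frac{1}{2}})f\Vert_{L^p}$ are immediate from the functional calculus.
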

 Theorem \ref{SubellipticBesov2} shows that, for $0\leq \delta<\rho\leq 1,$ if we apply an operator $A$ to a function/distribution $f\in B^{s}_{p,q}(G), $ the function $Af$ has regularity order $s+\vartheta,$ $\vartheta={ n(1-\rho)}|\frac{1}{p}-\frac{1}{2}|.$ So, this result improves to ones recently  reported in \cite{Cardona1,Cardona2}   and \cite{Cardona3}.  Other properties of pseudo-differential operators related to the nuclearity notion on Besov spaces (associated to the Laplacian) are proved in  \cite{Cardona22}. The work \cite{CardonaRuzhansky2017I}
 by the authors provide a systematic investigation of the Besov spaces associated to Rockland operators on graded Lie groups.

\bibliographystyle{amsplain}

\end{document}